\documentclass[11pt]{article}
\title{Markov chains on trees: almost lower and upper directed cases}
\author{Luis Fredes \& Jean Francois Marckert}
\date{September 2023}
\usepackage{xspace,fourier}

\usepackage[utf8]{inputenc}
\usepackage{amsthm}
\usepackage{amsmath}
\usepackage{amssymb,amsfonts, ulem}
\usepackage{hyperref,stmaryrd,caption}
\usepackage{cleveref}
\usepackage{xcolor,todonotes}
\usepackage[left=2.2cm,right=2.2cm,top=2.5cm,foot=2.5cm]{geometry}

\usepackage[english]{babel}
  
\newcommand{\N}{\mathbb{N}}
\newcommand{\R}{\mathbb{R}}
\newcommand{\C}{\mathbb{C}}

\def \Id{{\sf Id}}

\def \U{{\sf U}}

\def \M{{\sf M}}

\def \sut{{\textsf{A\small{llmost\,}\,U\small{pper}\,T\small{riangular}}}}
\def \slt{{\textsf{A\small{llmost\,}\,L\small{ower}\,T\small{riangular}}}}
\def \sut{\includegraphics[width=0.32cm]{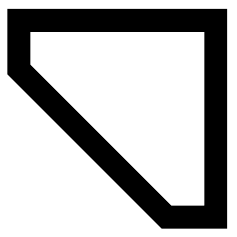}}
\def \slt{\includegraphics[width=0.32cm]{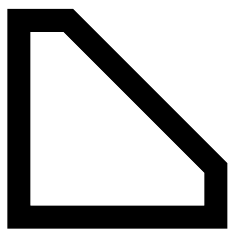}}

\def \Id{{\sf Id}}

\def \U{{\sf U}}

\def \M{{\sf M}}

\def \sut{{\textsf{A\small{llmost\,}\,U\small{pper}\,T\small{riangular}}}}
\def \slt{{\textsf{A\small{llmost\,}\,L\small{ower}\,T\small{riangular}}}}

\newcommand{\overbar}[1]{\overline{#1}}

\def \E{\mathbb{E}}
\theoremstyle{plain}

\newtheorem{lem}{Lemma}
\newtheorem{defi}[lem]{Definition}
\newtheorem{pro}[lem]{Proposition}
\newtheorem{theo}[lem]{Theorem}

\newtheorem{rem}[lem]{Remark}
\newtheorem{exa}{Example}

\newtheorem{cor}[lem]{Corollary}

\numberwithin{equation}{section}
\numberwithin{lem}{section}
\numberwithin{exa}{section}

\def \bls{{\tiny $\blacksquare$ \normalsize }}

\DeclareMathOperator{\Ker}{Ker}

\def \1{\textbf{1}}

\def \Z{\mathbb{Z}}

\def \bT{{\bf T}}

\def \bpar#1{\left\{\begin{array}{#1} }
\def \epar { \end{array}\right.}

\def \N{\mathbb{N}}
\def \R{\mathbb{R}}

\def \q{{\bf q}}

\def \bT{{\bf T}}

\def \bar{\overline}

\def \ba{\begin{align}}
\def \ea{\end{align}}
\def \be{\begin{eqnarray*}}
\def \ee{\end{eqnarray*}}
\def \ben{\begin{eqnarray}}
\def \een{\end{eqnarray}}

\def \beq{\begin{equation}}
\def \eq{\end{equation}}

\def \build#1#2#3{\mathrel{\mathop{\kern 0pt#1}\limits_{#2}^{#3}}}

\def \ba{{\bf a}}

\def \captionn#1{\begin{center}\begin{minipage}{17cm}\sf\caption{\small \textsf{#1}}\end{minipage}\end{center}}

\def \dis{\displaystyle}

\def \eref#1{(\ref{#1})}

\def \P{{\mathbb{P}}}

\def \imp{\Rightarrow}

\def \l{\left}

\def \r{\right}
\def \sous#1#2{\mathrel{\mathop{\kern 0pt#1}\limits_{#2}}}
\def \sur#1#2{\mathrel{\mathop{\kern 0pt#1}\limits^{#2}}}
\def \eqd{\sur{=}{(d)}}

\def\cro#1{\llbracket #1\rrbracket}
\def\croc#1{\rrbracket#1\llbracket}
\def \cror#1{\rrbracket#1\rrbracket}
\def \crol#1{\llbracket#1\llbracket}

\newcommand{\compact}{ \topsep0pt   \itemsep=0pt   \partopsep=0pt   \parsep=0pt}

\newcounter{c}
\def \bir{\begin{itemize}\compact \setcounter{c}{0}}
\def \itr{\addtocounter{c}{1}\item[($\roman{c}$)]} 
\def \eir{\end{itemize}\vspace{-2em}~}

\newcounter{d}
\def \bia{\begin{itemize}\compact \setcounter{d}{0}}
\def \eia{\end{itemize}\vspace{-2em}~}

\newcounter{b}
\def \bi{\begin{itemize}\compact \setcounter{b}{0}}

\def \ei{\end{itemize}\vspace{-2em}~}

\def \DD{{\sf D}}
\def \MM{{\sf M}}
\def \bma{\begin{pmatrix}}
\def \ema{\end{pmatrix}}

\def \Id{{\sf Id}}

\def \U{{\sf U}}

\def \M{{\sf M}}

\def \Id{{\sf Id}}

\def \U{{\sf U}}

\def \M{{\sf M}}

\def \E{\mathbb{E}}
\theoremstyle{plain}
\numberwithin{equation}{section}
\numberwithin{lem}{section}

\def \1{\textbf{1}}

\def \Z{\mathbb{Z}}

\def \bT{{\bf T}}

\def \bpar#1{\left\{\begin{array}{#1} }
\def \epar { \end{array}\right.}

\def \N{\mathbb{N}}
\def \R{\mathbb{R}}

\def \q{{\bf q}}

\def \bT{{\bf T}}

\def \bar{\overline}

\def \ba{\begin{align}}
\def \ea{\end{align}}
\def \be{\begin{eqnarray*}}
\def \ee{\end{eqnarray*}}
\def \ben{\begin{eqnarray}}
\def \een{\end{eqnarray}}

\def \beq{\begin{equation}}
\def \eq{\end{equation}}

\def \build#1#2#3{\mathrel{\mathop{\kern 0pt#1}\limits_{#2}^{#3}}}

\def \ba{{\bf a}}

\def \captionn#1{\begin{center}\begin{minipage}{17cm}\sf\caption{\small \textsf{#1}}\end{minipage}\end{center}}

\def \dis{\displaystyle}

\def \eref#1{(\ref{#1})}

\def \P{{\mathbb{P}}}

\def \imp{\Rightarrow}

\def \l{\left}

\def \r{\right}

\def \uniform#1{{\sf Uniform}(#1)}

\usepackage{mathbbol}
\usepackage{bbm}
\def \clup{almost upper-directed\xspace}
\def \cld{almost lower-directed\xspace}

\def \Cld{Almost lower-directed\xspace}

\def \Motzkin{Mötzkin\xspace}

\def \AUD{{\sf AUD}\xspace}
\def \ALD{{\sf ALD}\xspace}

\def \p{\mathbbm{p}}
\def \q{\mathbbm{q}}
\def \s{{\sf s}}
\def \Q{\mathbb{Q}}
\def \root{\varnothing}
\def \uU{{}^{u}{\U}}

\begin{document}

\maketitle

\begin{abstract} The transition matrix of a Markov chain $(X_k,k\geq 0)$ on a finite or infinite rooted tree is said to be \clup{} if, given $X_k$, the node $X_{k+1}$ is either a descendant of $X_k$ or the  parent of $X_k$. It is said to be  \cld{} if given $X_k$, $X_{k+1}$ is either an ancestor of $X_k$ or a child of $X_k$. \par
 These models include nearest neighbor Markov chains on trees.
 Under an irreducibility assumption, we show that every \clup{} transition matrix on infinite (locally finite) trees has some invariant measures. An invariant measure $\pi$ is expressed thanks to a determinantal formula. We give general explicit criteria for recurrence and positive recurrence. An efficient algorithm (the leaf addition algorithm) of independent interest allows  $\pi$ to  be computed on many trees, without resorting to linear algebra considerations. \par  
 Flajolet, in a series of papers (starting from \cite{Flajolet}), provided some relations between continuous fractions, generating functions of weighted \Motzkin paths, and used them in connection with the analysis of birth and death processes. These fruitful representations made it possible to establish many formulae for continuous fractions. Analogous considerations appear here: this type of study can be extended to weighted paths on trees, whose generating functions can also be expressed, this time in terms of multicontinuous fractions.
\end{abstract}

\section{Introduction}
A rooted tree $(t,r)$ is a graph, finite or infinite, but always locally finite throughout this paper, connected and having no cycle, in which the root $r$, a node of $t$, is distinguished. In this paper, we are interested in Markov chains on trees. When the tree is finite, the study of such a Markov chain can be done with the standard toolbox of Markov chains on finite spaces. When the tree is infinite, it is generally difficult to find invariant distributions, to decide on recurrence or transience, on positive recurrence. \par
  In the literature, one finds many studies of nearest neighbors random walks on trees, often with transition matrices depending on the degree of the current node, and very few papers discussing general models of Markov chains on trees. \par
  The aim of this paper is to present a large class of transition matrices, much larger than the class of nearest-neighbors random walks, that are accessible to computation and analysis.
  These Markov transition matrices will be called either \clup{} (\AUD for short), when only transitions to descendants and the parent are allowed, or \cld{} (\ALD for short), when only transitions to ancestors and to children are allowed.
\begin{figure}[htbp]
\centerline{\includegraphics{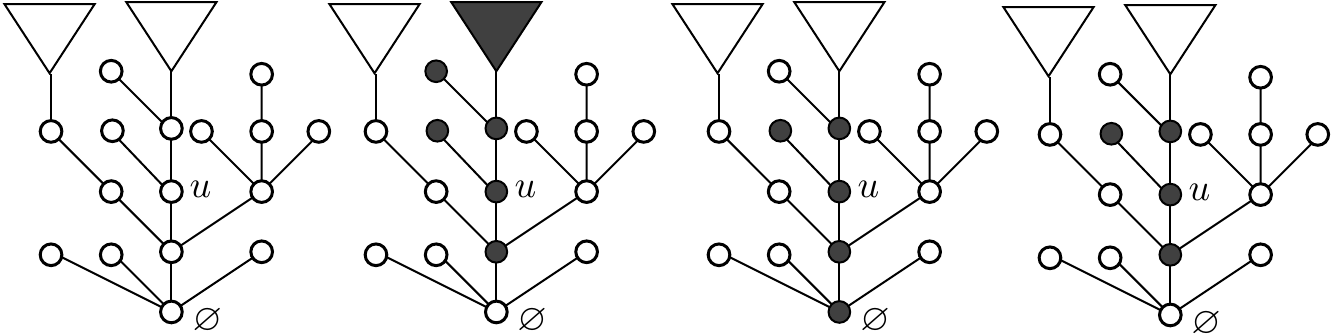}}
\captionn{\label{fig:tree-clup} $\root$ is the root, and $u$ a node. Triangles figure finite or infinite subtrees, and disks, nodes. On the second picture, only the transitions from $u$ toward dark nodes are possible for \clup{} transitions matrices. On the third pictures,  only the transitions from $u$ toward dark nodes are possible for \ALD{} transitions matrix. For nearest-neighbor Markov chains, only, the neighbors of $u$ are accessible (fourth picture).}
\end{figure}
  At the intersection of these two classes are nearest neighbor Markov chains, probably the  most studied class of Markov chains on trees. The class \AUD is easier to study: we provide for this class formula and algorithm to compute the invariant measures, we provide criteria to decide recurrence or positive recurrence; the algebra in play is rich enough: we shed some light on the combinatorial points of view allowing to provide determinantal formula for the invariant distribution, multicontinuous fractions representation of the Green functions, etc. \par
Before entering into the core of the paper, we think that it is useful to say a few words about Markov chains on $\N$, and particularly about those with almost triangular transition matrices. The space $\N$ can be seen as an infinite tree with no branching points, and many phenomena appearing on $\N$ will have some counterparts on trees: in fact, the class of transition matrices that we will study in the sequel, and for we will provide computations and criteria, is a generalization of these ``almost triangular transition matrices''.

\subsection{Content of the paper}

The paper is organized as follows: In \Cref{sec:W}, we explain that \AUD and \ALD Markov chains on the line (which is an infinite tree without branching points) are simply almost upper and lower triangular Markov chains: already for these models, a determinantal formula for the invariant distribution appears. We formally define trees in \Cref{sec:TF}, as well as the two classes \AUD, \ALD and random walks on trees. We give a necessary and sufficient condition for the irreducibility of these chains. Finally, in \Cref{sec:TE} we recall the formula for the invariant measure of an irreducible Markov kernel on a finite state space: again, a determinantal formula arises, which is useful to have in mind before entering to the main new result sections.

  In \Cref{sec:AUD1}, we prove that all \AUD irreducible transition matrices admit at least one invariant measure, which  we call the $h$-invariant measure, which has an explicit determinantal formula (\Cref{theo:1.3}). This allows us to provide a criterion for deciding positive recurrence. In \Cref{pro:rec}, a first criterion for recurrence of \AUD chains is given. Section \ref{sec:qdgg}, more simple  criteria are given, when the number of ends of the tree is finite (the projections of the chains onto the ends serve as a tool in this matter).

  In \Cref{sec:aud}, we present an alternative algorithm to compute the $h$-invariant measure, which we call the leaf addition strategy. We then discuss the existence of other invariant measures, and more general spectral properties are obtained. Among other things, we will see in \Cref{sec:eig} that irreducible \AUD Markov chains on infinite trees without leaves have a complete spectrum (all elements of $\C$ are left eigenvalues), while this is not generally the case on infinite trees with some leaves (in any case, the set of complex numbers that are not left eigenvalues is at most countable).
  
  In \Cref{sec:RWT}, we discuss random walks on trees, and in particular, random walks on critical Galton-Watson trees conditioned on non-extinction: a model of an infinite tree with a single end. In  \Cref{pro:sqfd}, we provide a  recurrence criterion for these chains when the transition coefficients $\U_{u,v}$ from a node $u$, are a function of the degree of $u$.  
  In \Cref{sec:CLD}, we say a few words about \ALD transition matrices. They are in general more difficult to study than $\AUD$, except when they are reversible with respect to some invariant measure, and in this case, the reverse time is an \AUD transition matrix, which is easier to handle.  \Cref{sec:CAA} highlights two combinatorial relations involving \AUD matrices: the link between the generating functions of weighted paths (corresponding to Markov chain trajectories) and multicontinuous fractions (à la Flajolet), a random walk on the rational numbers (using the Stern-Brocot tree to encode the possible transitions). Finally, in \Cref{sec:example}, we compile a series of examples to illustrate the application of our results, which are referred to throughout the article.

\subsection{Warmup with the simplest infinite tree: the line}
\label{sec:W}
This paper can be seen as a continuation of \cite{MR4669754} by the same authors (reading \cite{MR4669754} is not a prerequisite for understanding this article), in which Markov chains on $\N$ with transition matrices that are almost lower (\slt) or almost upper (\sut) triangular are studied. A Markov chain $(X_k,k\geq 0)$ on $\N$ with a \sut{} transition matrix has its increments $X_k-X_{k-1}$ greater than or equal to $-1$,  while \slt{} Markov chains 
 have increments less than or equal to $+1$.
 If we consider $\N$ as an infinite tree, rooted at zero, where $i+1$ is the only child of $i$ (there are no branching points), we can see that the \sut{} transition matrices on this tree correspond to \AUD transition matrices, while \sut{} transition matrices  correspond to \ALD transition matrices.

 Thus, each of these models generalizes birth and death processes, whose transition matrices are simultaneously \slt{} and \sut.\par
In  \cite{MR4669754}, it is shown, among other things, that \sut{} transition matrices have a simpler behavior than \slt{}. 
 For example, they have a single invariant measure (up to a multiplicative constant), which is not the case for \slt{} for which the cone generated by positive invariant measures can have any dimension  ranging from 0 to $+\infty$. Characterizations for recurrence and positive recurrence are also given.
~\par
The difference in behavior between \sut{} and \slt{} Markov chains somehow emerge from the difference in nature of the algebraic equations solved by the invariant measure $\rho = \rho M$.  
The solution of the equation $\rho = \rho M$ is immediate in the \sut{} case, in which it is a triangular system, while in the lower case, the equation expresses $\rho_j$ as a linear combination of $(\rho_{j+k},k\geq 1)$, which gives rise to the variety of behaviors discussed above.

Hence the variety and complexity of behaviors discussed above (and in \cite{MR4669754}) for \sut{} and \slt{} Markov chains, exist on tree Markov chains.  

\subsection{Tree formalism. }
\label{sec:TF}

  A rooted tree $T$ is a subset of the set of finite words ${\cal U}:=\{\root\}\cup \bigcup_{k\geq 1}\mathbb{N}^k$ on the alphabet $\mathbb{N}$ such that: \\
$(i)$ it contains the element $\root$ (the empty word), seen as the root,\\
$(ii)$ if the word $u_1\cdots u_h$ is in $T$ then $T$ contains each of its prefixes $u_1\cdots u_k$ for $k\leq h$ (including $\root$),\\
$(iii)$ if the finite word $u_1\cdots u_h$ is in $T$ then so does $u_1\cdots u_{h-1}i$ for all $i$ in $\{1,\cdots,u_h\}$.\\
For $u\in T$, the set $\cro{\root,u}$, called the set of ancestors of $u$ is defined as the set of prefixes of $u$ (including $u$ and $\root$). The set $T_u:=\{w \in T~: u \in \cro{\root,w}\}$ is the set of descendants of $u$ in $T$. \\
A word $u\in T$ is seen as a node of $T$, its length $|u|$ is its height; the parent $p(u)$ of a node $u\neq \root$, is its ancestor having height $|u|-1$.
\begin{figure}[htbp]
  \centerline{ \includegraphics{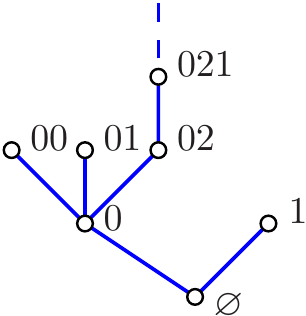}}
 \captionn{Representation of the tree $t=\{\root, 0,00,01,02,021,...\}$. The set of children of $02$ is $c_t(02)=\{021\}$, the nodes $1$, $00$, $01$ are leaves, $\root, 0, 02, 021$ are internal nodes. On this example $\cro{\root,01}=\{\root, 0,01\}$.}
\end{figure}~\\
The set of children $c_T(u)$ of a node $u$ in $T$ is the set of words $ui\in T$ where $i$ is a single letter. From this we see that $|c_T(u)|=\max\{i~:ui\in T\}$ is the number of children of $u$.
 A node with no children is called a leaf, and the set of leaves of $T$ is denoted by $\partial T$ (a finite tree has at least one leaf, but an infinite tree can have zero leaves).

A tree is said to be infinite if its cardinality is infinite, and finite otherwise. 

An end is an injective path starting at the root and isomorphic to the discrete line $\N$. An infinite tree can have from one end to an infinite uncountable number of ends.

\paragraph{Conventions.}~\\Throughout the paper:\\
$(i)$ All trees  are considered to be locally finite (even when unspecified), i.e., the number of children of each node is finite.
\\
$(ii)$ The set of non-negative measures over a tree $T$ (equipped with the power set $\sigma$-field) is denoted by ${\cal M}_T^{+}$; denote by $\sim$ the proportionality equivalence relation (we write $\mu \sim \nu$ when $\mu$ and $\nu$ are proportional).\\
$(iii)$ The identity matrix is denoted $\Id$, whatever is its size, and the size will be clear from the context. For example, if $A$ is a matrix, in the expression $\Id-A$, $\Id$ is the identity matrix that has the same size as that of $A$.\\
$(iv)$ Recurrence/transience/positive recurrence, will be used to qualify Markov chains and transition matrices\\
$(v)$ If $\U$ is a transition matrix, we call $\U$-Markov chain a Markov chain having $\U$ as transition matrix,\\
$(vi)$ We  will say that ``$v$ is a $\lambda$-left (or right) eigenvector of $A$'', as a shorthand for  ``$v$ is a left (or right) eigenvector of $A$ associated with the eigenvalue $\lambda$''.\medskip

When dealing with the set of invariant measures of a transition matrix, it is common to work in ${\cal M}_T^{+,\sim} =  {\cal M}_T^{+}/\sim$, where uniqueness actually means indeed uniqueness up to a multiplicative factor. The set of invariant measures is said to be $d$-dimensional if ${\cal M}_T^{+}$ generates a vector space of dimension $d$.

\begin{defi} Let $T$ be a finite or infinite tree (see \Cref{fig:tree-clup}).
  \begin{itemize}
\item[\bls]  A transition matrix $\U=\bma \U_{u,v}\ema_{u,v\in T}$ is said to be \underbar{\clup{}}, ``\AUD'' for short, if 
\[\forall u,v \in T, \quad\U_{u,v}>0 \imp (v \in \{ p(u)\}\cup T_u)\]
that is, if either $v$  is the parent $p(u)$ of $u$  or a descendant of $u$.
Since $u\in T_u$, $\U_{u,u}$ is allowed to be positive. 
\item[\bls] 
 A transition matrix $\DD=\bma \DD_{u,v}\ema_{u,v\in T}$ is said to be \underbar{\cld{}}, ``\ALD'' for short, if 
 \[\forall u,v \in T, \quad \DD_{u,v}>0 \imp ( v \in c_T(u)\cup \cro{\root,u})\]
 that is if either $v$  is a child of $u$ or one of its ancestor.
Since $u\in\cro{\root,u}$, $\DD_{u,u}>0$ is also allowed.
\item[\bls]   A transition matrix $\MM=\bma \MM_{u,v}\ema_{u,v\in T}$ is said to be the transition matrix of a \underbar{random walk} on $T$, if $\;\MM_{u,v}>0$ implies $v\in\{u,p(u)\}\cup c_T(u)$ (it is simultaneously \clup and \ALD).\end{itemize}
\end{defi}
A Markov chain $(X_k,k\geq 0)$ with an \AUD transition matrix, is somehow (discrete) lower semi-continuous: it is allowed to make jumps of size 1 (at most)  towards the root, and unbounded jumps towards the descendants of the current node.\par
A Markov chain  $(X_k,k\geq 0)$ with an \ALD transition matrix, is somehow  (discrete) upper semi-continuous: jumps to the descendants are reduced to the children, while jumps to the ancestors are unbounded.
\begin{lem} \label{lem:irr}[Irreducibility criteria]
  \bir
  \itr An \AUD transition matrix $\U$ is irreducible iff, for all $u\neq \root$, $\U_{u,p(u)}>0$ and, for each node $v\neq \root$, there exists a strict ancestor $u$ of $v$ and an element $w$ in $T_v$ such that $\U_{u,w}>0$.
\itr An \ALD transition matrix $\DD$ is irreducible iff, for all $u\neq \root$, $\DD_{p(u),u}>0$ and for each node $v$, there exists a descendant $w$ of $v$ and a strict ancestor $u$ of $v$, such that $\DD_{w,u}>0$.~
\itr The transition matrix of a random walk on $T$ is irreducible iff for each $u\neq \root$, $\MM_{u,p(u)}>0$ and $\MM_{p(u),u}>0$. \eir
\end{lem}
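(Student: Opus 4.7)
The plan is to prove (i) in full, obtain (ii) by a dual argument, and derive (iii) as a specialization. Throughout, recall that AUD transitions from a node $u$ can only go to $p(u)$ or into $T_u$, while ALD transitions from $u$ go to an ancestor of $u$ or to a child.

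For the necessity in (i): if $\U_{u,p(u)} = 0$ for some $u \neq \root$, every trajectory issued from $u$ stays in $T_u$, so $p(u)$ is unreachable from $u$. For the second condition, fix $v \neq \root$; since $\root \notin T_v$, any path $\root \to v$ must contain a transition $u \to w$ with $u \notin T_v$ and $w \in T_v$. By AUD, $w$ is either $p(u)$ or a descendant of $u$; the former would force $u \in c_T(w) \subset T_v$, a contradiction, so $u$ is an ancestor of $w \in T_v$, and being outside $T_v$ it is a strict ancestor of $v$. Conversely, if both conditions hold, condition 1 yields $a \to p(a) \to \cdots \to \root$ for every $a$, and the reverse direction $\root \to b$ is proved by induction on $|b|$: trivial for $|b| = 0$, and for $|b| \geq 1$ condition 2 supplies a strict ancestor $u$ of $b$ (so $|u| < |b|$) together with $w \in T_b$ with $\U_{u,w} > 0$; by induction $\root \to u$, then $u \to w$, then the parent chain $w \to p(w) \to \cdots \to b$ (each step positive by condition 1, and remaining inside $T_b$ because $b$ is an ancestor of $w$) reaches $b$. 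Composing the two halves gives irreducibility.

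For (ii), the necessity of condition 1 uses that the only ALD transition entering $T_u$ from outside is $p(u) \to u$: a child transition from $x \notin T_u$ lands on a child of $x$, and among the children of $p(u)$ only $u$ itself belongs to $T_u$. The necessity of the second condition is that any trajectory exiting $T_v$ contains a transition $w \to u$ with $w \in T_v$, $u \notin T_v$; child transitions from $w$ stay in $T_v$, so $u$ is a strict ancestor of $w$, hence of $v$. Sufficiency is dual to (i): condition 1 gives $\root \to b$ by stepping down the root-to-$b$ path, while $a \to \root$ is proved by induction on $|a|$, descending from $a$ to the $w$ given by condition 2 (using condition 1 along the $a$-to-$w$ path) and then jumping to the strict ancestor $u$, whose height is smaller than $|a|$. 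Finally (iii) follows from (i): for a random walk, $\MM_{u,w} > 0$ forces $w \in \{u, p(u)\} \cup c_T(u)$, and imposing $u$ a strict ancestor of $v$ together with $w \in T_v$ rules out $w = u$ and $w = p(u)$ (both strict ancestors of $v$), leaving $w$ a child of $u$ lying in $T_v$, which forces $u = p(v)$ and $w = v$; so condition 2 of (i) collapses to $\MM_{p(v),v} > 0$, and together with condition 1 this gives exactly the inequalities of (iii).

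The only real subtlety is the bookkeeping: at each step one must verify that the intermediate nodes produced by the recursion lie in the expected subtree, in particular that the parent chain $w \to p(w) \to \cdots \to b$ never overshoots $b$ (guaranteed by $b \in \cro{\root,w}$). Inducting on height $|b|$ (resp.\ $|a|$) rather than on graph distance is the key choice: the strict ancestor produced by condition 2 is automatically of strictly smaller rank than the current node, which makes the induction close cleanly.
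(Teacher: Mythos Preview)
Your proof is correct and follows essentially the same strategy as the paper: both hinge on the fact that condition~2 produces a node of strictly smaller height, so an induction (yours) or an infinite-descent contradiction (the paper's) on height closes the argument. Your treatment is in fact more explicit than the paper's, especially in the necessity arguments and in the reduction of (iii) to (i); the paper simply asserts (iii) without deriving it from the earlier parts.
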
 
\begin{proof} $(i)$ The hypotheses are necessary to come back to $u$ starting from $u$, for all $u$. Let us see why they are sufficient. Each vertex is accessible  (in an arbitrary number of steps) from any of its descendants. So, it suffices to show that every node $v$ has a descendant that is accessible.
By contradiction, suppose that $v$ is a node other than $\root$ that is  not accessible from $\root$. In this case, none of the descendants of $v$ is accessible either; since there is an ancestor $u$ of $v$ such that $\U_{u,v}>0$,
we deduce that $u$ is also inaccessible. By iteration, one sees that there cannot be a "smallest" inaccessible vertex, so that the root is inaccessible which is clearly false for any starting vertex.\\
$(ii)$ The conditions are necessary in order to access a given node $v$ from its ancestors, and to climb up in the tree. Let us see why they are sufficient. Under an \ALD transition matrix, one can traverse from a node $u$ to any of its descendants $v$. It is then sufficient then to show that, given $v\neq \root$ in the tree, there exists a path with positive weight from $v$ to the root. The fact that an ancestor of $u$ is accessible from a descendant $w$ of $v$ (which is accessible from $v$), shows that from $v$, one can access to nodes closer to the root. By iterating (from $u$) one can prove that the root is accessible.
\\
$(iii)$ In the case of random walks, a transition matrix is irreducible iff every edge can be traversed both ways.
\end{proof}

\subsection{Tree expansion of the invariant distribution of Markov chains on finite state space.}\label{sec:TE} The content of this section is classical (and is due to Leighton and R. Rivest \cite{LR}, Aldous \cite{Al90} and Broder \cite{Bro89}; see also \cite{HLT21}, \cite{LFJFM}). 
If $V$ is a finite state space and $K$ an irreducible transition matrix over $V$, define the oriented graph $G=(V,E)$ where an edge $e=(v_1,v_2)$ is in $G$ if and only if $K_{v_1,v_2}>0$. 
The unique invariant distribution $\pi$ of $K$ is
\ben\pi(r) = {\sf Cste}. \det\l( \Id-K^{(r)}\r),~~~\textrm{ for all } r\in V,\een
where $K^{(r)}$ is the matrix $K$ in which the $r$th row and column has been suppressed. The ``graph view'' is not needed for this statement, but it allows to state that
\ben\label{eq:grte}
\pi(r) = {\sf Cste}.\sum_{(t,r) \in {\sf SP}(G,r)}~~ \prod_{e=(e_1,e_2)\in E(t,r)} M_{e_1,e_2}
\een
where ${\sf SP}(G,r)$ is the set of spanning tree of $G$ rooted at $r$, and the edge set $E(T,r)$ for a spanning tree $(T,r)$ contains all the edges of $T$ directed towards $r$ (that is the vertex $e_2$ is closer to $r$ than the vertex $e_1$).

More generally, given an oriented graph $G=(V,E,w)$ where $w:V^2\to \C$ is a function that assigns a weight to each oriented edge (a pair in $V^2\setminus E$ has weight $0$), the \textit{Laplacian matrix} of the graph is
\ben\label{eq:hterfdsq}
{\sf Laplacian}(w)= \bma 1_{i=j}\l(\sum_{a\in V} w_{i,a}\r) - w_{i,j} \ema_{i,j\in V}, \een the total weight of the set of spanning trees rooted at some node $r\in V$ is
\ben {\sf Weight}(V,w,r)=\det ({\sf Laplacian}(w)^{(r)}),\een
(this is Kirchhoff matrix tree theorem \cite{K1847}, see Zeilberger \cite{DZ}) so that, again, when $K$ is an irreducible transition matrix on a finite space,
\ben\label{eq:grte2}
\pi(r) = {\sf Cste}.\det ({\sf Laplacian}(K)^{(r)}).
\een

\section{\AUD transition matrices : invariant measure, recurrence and transience}
\label{sec:AUD1}
\subsection{An explicit invariant measure}

For a transition matrix $\U$, a node $a$, and a set of nodes $B$, we let $\U_{a,B}=\sum_{b\in B} \U_{a,b}$. Recall that $T_a$ is the subtree of $T$ rooted at $a$. For a node $u$ in $T$ and $v\in\cro{\root,p(u)}$, the successor of $v$ in the direction of $u$ is denoted by $\s(v,u)$ (this is the unique child of $v$  on the path $\cro{\root,u}$).
\begin{theo}\label{theo:1.3}
Consider  $\U$  an irreducible \AUD transition matrix of a finite or infinite tree $T$.  For any node $u$ of $T$, consider the weighted graph $({}^uG,\uU)$ with set of nodes ${}^uV:=\cro{\root,u}$, and in which the weight of
\[\uU_{a,b} = (\U_{a,T_b}-\U_{a,T_{\s(b,u)}})\1_{b\in \cro{p(a),p(u)}} +\1_{b=u}\U_{a,T_u}\]
is defined for all $a\in\cro{\root,p(u)}$ and $b\in\cro{\root, u}$. Set
\ben\label{eq:pi}
\pi(u) = \pi(\root)\frac{{\sf Weight}( \cro{\root,u}, {}^{u}\U, u)}{{\dis\prod_{v \in \cror{  \root,  u}} \U_{v,p(v)} }}=\pi(\root)\frac{\det ((\Id-\uU)^{(u)})}{\dis\prod_{v \in \cror{  \root,  u}} \U_{v,p(v)} }.
\een
\bir
\itr The measure $\pi$ is an invariant measure of $\U$ (with positive coordinates).
\itr An \AUD transition matrix is positive recurrent if and only if $\sum \pi(u)<+\infty$.\eir
\end{theo}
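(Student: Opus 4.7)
My plan for (i) is to convert the invariance equation $\pi=\pi\U$ into a polynomial identity in the entries of $\U$, then to establish that identity through the spanning-tree interpretation of the determinants given by Kirchhoff's matrix-tree theorem \eqref{eq:grte2}. Part (ii) will then follow from standard Markov chain theory.

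A first observation, which also yields the positivity claim in (i), is that for each fixed $u$ the matrix $\uU$ on $\cro{\root,u}$ is stochastic with $u$ absorbing. Indeed, for $a\in\cro{\root,p(u)}$, the subtrees $T_b\setminus T_{\s(b,u)}$ for $b$ ranging along the path from $a$ to $p(u)$, together with $T_u$, form a partition of $T_a$, so the row sum telescopes to $\U_{a,p(a)}+\U_{a,T_a}=1$. Consequently, \eqref{eq:grte2} identifies $\det((\Id-\uU)^{(u)})$ with the total weight of spanning trees of $({}^uG,\uU)$ rooted at $u$, and the irreducibility hypothesis (\Cref{lem:irr}) guarantees the existence of at least one spanning tree with strictly positive weight, hence $\pi(u)>0$.

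To verify $\pi=\pi\U$, the \AUD property ensures that transitions into $v$ come only from $v$ itself, its children in $c_T(v)$, or its strict ancestors in $\cro{\root,p(v)}$, so the invariance equation at $v$ reads
\[
(1-\U_{v,v})\pi(v)=\sum_{c\in c_T(v)}\pi(c)\,\U_{c,v}+\sum_{a\in\cro{\root,p(v)}}\pi(a)\,\U_{a,v}.
\]
Setting $D_x:=\prod_{w\in\cror{\root,x}}\U_{w,p(w)}$ and using the multiplicative relations $D_c=D_v\cdot\U_{c,v}$ (for children $c$) and $D_v=D_a\cdot\prod_{w\in\cror{a,v}}\U_{w,p(w)}$ (for ancestors $a$), multiplying both sides by $D_v/\pi(\root)$ converts the invariance equation into the determinantal identity
\begin{align*}
(1-\U_{v,v})\det\bigl((\Id-\uU^v)^{(v)}\bigr)
&=\sum_{c\in c_T(v)}\det\bigl((\Id-\uU^c)^{(c)}\bigr)\\
&\quad+\sum_{a\in\cro{\root,p(v)}}\U_{a,v}\Bigl(\prod_{w\in\cror{a,v}}\U_{w,p(w)}\Bigr)\det\bigl((\Id-\uU^a)^{(a)}\bigr),
\end{align*}
where $\uU^x$ denotes the matrix $\uU$ associated with reference node $x$. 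I would prove this identity by applying Kirchhoff's theorem to each determinant and reorganizing the spanning trees on the left according to the outgoing edge at $p(v)$: if this edge points to $v$ itself, the remainder is (essentially) a spanning tree of $({}^cG,\uU^c)$ for an appropriate child $c$; if it points instead to some strict ancestor $a$, the portion of the tree between $a$ and $v$ can be ``peeled off'' and the remainder recovers a spanning tree of $({}^aG,\uU^a)$, with the extracted weight matching $\U_{a,v}\prod_{w\in\cror{a,v}}\U_{w,p(w)}$. An alternative algebraic route is to perform a Schur-complement expansion of $(\Id-\uU^v)^{(v)}$ along the row and column indexed by $p(v)$ and iterate along ancestors.

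For Part (ii), given (i), we invoke the standard theory of irreducible Markov chains on countable state spaces: if $\sum_u\pi(u)<\infty$, then the normalization of $\pi$ is an invariant probability distribution and the chain is positive recurrent; conversely, positive recurrence implies recurrence, which forces the invariant measure to be unique up to a scalar, so $\pi$ must then be proportional to the invariant probability distribution and $\sum_u\pi(u)<\infty$. The main obstacle is the determinantal identity of (i): the matrices $\uU^x$ are of different sizes and their shared entries differ in subtle ways (for instance, $\uU^v$ and $\uU^{p(v)}$ differ on the column of $p(v)$ by $\U_{a,T_v}$), so executing either the combinatorial bijection or the Schur-complement expansion requires careful bookkeeping of the slab decompositions.
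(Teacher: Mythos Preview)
Your approach differs substantially from the paper's. The paper does not verify the balance equation directly via a determinantal identity among the $\uU^x$. Instead, for a \emph{finite} tree it applies the matrix-tree theorem \eqref{eq:grte2} to the \emph{whole} tree $T$: the key observation (\Cref{lem:ctree}) is that in any spanning tree of $(T,E_\U)$ rooted at $u$, every edge out of a vertex \emph{not} on $\cro{\root,u}$ is forced to be $(v,p(v))$; the product of these forced weights equals a constant (independent of $u$) times $1/\prod_{v\in\cror{\root,u}}\U_{v,p(v)}$, and the remaining free edges correspond, after contracting the forced components onto $\cro{\root,u}$, exactly to a spanning tree of $\cro{\root,u}$ with weights $\uU$. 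This yields \eqref{eq:pi} with nothing further to prove. The infinite case is then handled by truncation: the balance equation at any $u$ with $|u|\le h$ involves only nodes at levels $\le h+1$, and those coincide with the transitions of a finite projected chain on $T$ cut at height $h+2$.

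Your proposed bijection has a genuine gap. You decompose the spanning trees of $\cro{\root,v}$ on the left according to the outgoing edge at $p(v)$, claiming that the case ``$p(v)\to v$'' contributes the sum $\sum_c W_c$ over children. But the spanning trees on the left live on $\cro{\root,v}$, which contains no child $c$ of $v$; there is no data from which to select ``an appropriate child $c$''. One can try to manufacture the children by splitting the extraneous factor $1-\U_{v,v}=\U_{v,p(v)}+\sum_c\U_{v,T_c}$ and attaching a new edge $v\to c$, but the resulting object is then weighted by $\uU^v$ on $\cro{\root,p(v)}$, not by $\uU^c$: the two differ precisely in the column indexed by $v$ (where $\uU^v_{a,v}=\U_{a,T_v}$ but $\uU^c_{a,v}=\U_{a,T_v}-\U_{a,T_c}$), which is exactly the discrepancy you flag at the end but do not resolve. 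The ancestor side has the analogous mismatch. A correct direct verification of the balance equation does exist in the paper (the leaf-addition argument of \Cref{pro:dadgty} and \Cref{theo:lgp}), but it proceeds recursively rather than via a one-shot spanning-tree bijection between paths of different lengths.
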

\noindent 

\bls We call the measure $\pi$, as defined in \eref{eq:pi}, the \textsf{h}-invariant measure, since, as we will see in \Cref{seq:h-t}, it can be seen as the limiting invariant measure of a kind of \textsf{h}-transform of $\U$.\\
\bls As we will see in \Cref{sec:qfgrht}, there exist infinite trees $T$ and irreducible \AUD{} matrices $\U$ defined on them, admitting many invariant measures in ${\cal M}_T^{+,\sim}$ (finite trees are simpler and will be treated at the beginning of the proof). However, it is well known that an irreducible transition matrix $\U$ which is recurrent (including then the positive recurrent case), admits a unique invariant measure (see e.g. Brémaud \cite[Theo. 2.2]{Bremaud}): In this case, by \Cref{theo:1.3}, this invariant measure is $\pi$. Hence $(ii)$ is just a standard consequence of $(i)$.\\
\begin{rem} Observe that for $a\in\croc{\root,p(u)}$, $\uU_{a,p(a)}=\U_{a,p(a)}$ and for $a\in\cro{\root,p(u)}$, $\uU_{a,a}=\U_{a,T_a}-\U_{a,T_{\s(a,u)}}$. In fact, given the proof, it could have been natural, for combinatorial reasons, to take  $\uU_{a,a}=\U_{a,a}$ or $\U_{a,a}=0$ instead, since these quantities follow naturally from the analysis. However, when computing the Laplacian matrix, the value of the diagonal coefficients $\U_{a,a}$ are "removed" by the diagonal correction, so that, we can modify them as we wish. With the choice we took, we have
\[D_a{(u)}=\U_{a,T_u}+\sum_{b \in \cro{p(a),p(u)}}\l(\U_{a,T_b}-\U_{a,T_{\s(b,u)}}\r) =1,\]
and then ${\sf Laplacian}( {}^{u}\U)^{(u)}= \det ((\Id-\uU)^{(u)})$.\\
\bls We do not need to define the transition  $(\uU_{u,b}, b \in\cro{\varnothing,u})$ ``out of $u$", since in the calculation of the right hand side of \eref{eq:pi} the lines and columns corresponding to $u$ are suppressed. Nevertheless we could have taken $\uU_{u,p(u)}=\U_{u,p(u)}$ and $\uU_{u,u}=1-\U_{u,p(u)}$, so that $\Id-\uU$ would be the Laplacian matrix of $\uU$.
 \end{rem} 
 
\begin{rem}[Inherent complexity.]  
  When one deals with irreducible Markov chains on an infinite graph, all transitions are important: changing the transitions from a single state can transform a positive recurrent chain into a non-positive recurrent chain. For example, the random walk on $\mathbb{N}$, reflected at zero, with i.i.d. steps $-1$ with probability $2/3$ and $+1$ with probability $1/3$ is positive recurrent. However, if one replaces only the transition from 0 and takes instead $M_{0,i}=1_{i\geq 1}(6/\pi^2)/i^2$, the chain is no longer positive recurrent.  Since invariant measures must depend algebraically on all transitions, all criteria of recurrence, all formulae providing the invariant measures (of left or right eigenvectors) must depend on all transitions. In this matter, beyond their apparent complexity, formulas such as \eref{eq:pi}, which use products of transitions along branches, or determinants of finite matrices, are the simplest formulas that allow each coefficient of the transition matrix to play a role.
\end{rem}

\subsection{A general criterion for recurrence}

\begin{pro}\label{pro:rec} Let $\U$ be an \AUD irreducible transition matrix on an infinite tree $T$, and $(X_k,k\geq 0)$ a $\U$-Markov chain. For any set of nodes $A$, denote by
  \[\tau_{A}=\tau_A(X)=\inf\l\{k: X_k\in A\r\}\] the hitting time of $A$; for a single node, as the root $\root$, we write $\tau_{\root}$ instead of $\tau_{\{\root\}}$. Let $T_{\geq h}=\{u \in T, |u|\geq h\}$ be the subset of $T$ consisting of the nodes at distance $\geq h$ from $\root$.
  \bir
	\itr  $\U$ is recurrent on $T$ if and only if for each node $i$ of the first generation ($i\in c_T(\root)$), we have
	\ben\label{eq:rec}\lim_{b \to +\infty} \P(\tau_{\root} < \tau_{T_{\geq b}}~|~X_0=i)=1.\een
	\itr 
Let  $T_{i,< h}=\{u \in T_i,~|u|<h\}$ be the restriction of the subtree $T_i$ to the $h-1$ first levels (of $T$), and $U_{[i,<h]}$ be the matrix obtained by keeping only the rows and columns of $\U$ corresponding to the elements of $T_{i,< h}$ (and $\U^{(i)}_{[i,<h]}$ for the same matrix, deprived from the row and column indexed by $i$). We have:
	\ben\label{eq:redqc}
	\P(\tau_{\root} <  \tau_{T_{\geq h}}~|~X_0=i)=\frac{\det(\Id-\U^{(i)}_{[i,<h]})}{\det(\Id -\U_{[i, <h]})} \U_{i,\root}. 
	\een
\eir\end{pro}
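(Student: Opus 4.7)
The plan is to prove (ii) first, then to deduce (i) via a monotone-convergence argument combined with a harmonicity dichotomy.

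For (ii), the AUD property ensures that, starting from $v \in T_i$, the chain remains in $T_i \cup \{\root\}$ until absorption: from any $v \in T_i \setminus \{i\}$, the admissible transitions land in $\{p(v)\} \cup T_v \subset T_i$, and $i$ is the unique node of $T_i$ from which $\root$ is directly reachable. Hence, up to time $\tau_\root \wedge \tau_{T_{\geq h}}$, the chain visits only nodes of $T_{i,<h}$. Writing $Q := \U_{[i,<h]}$ and $f(v) := \P(\tau_\root < \tau_{T_{\geq h}} \mid X_0 = v)$ for $v \in T_{i,<h}$, a one-step analysis yields
\[
f(v) = \U_{v,\root} + \sum_{w \in T_{i,<h}} Q_{v,w}\,f(w),
\]
since excursions directly into $T_{\geq h}$ contribute zero to the good absorption event. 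The AUD property forces $\U_{v,\root} = 0$ for $v \in T_{i,<h} \setminus \{i\}$, so the inhomogeneous term reduces to $\U_{i,\root}\,\delta_i$. Invertibility of $\Id - Q$ follows from the fact that $Q$ is a finite substochastic matrix with a positive-probability escape route from every state (by irreducibility one finds a path from any $v \in T_{i,<h}$ to $i$, and $\U_{i,\root} > 0$ allows exit, so the spectral radius of $Q$ is strictly less than one). Cramer's rule then gives
\[
f(i) = \U_{i,\root} \cdot [(\Id - Q)^{-1}]_{i,i} = \U_{i,\root} \cdot \frac{\det(\Id - Q^{(i)})}{\det(\Id - Q)},
\]
which is precisely \eref{eq:redqc}.

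For (i), observe that the events $A_b := \{\tau_\root < \tau_{T_{\geq b}}\}$ increase in $b$ (since $T_{\geq b}$ decreases, $\tau_{T_{\geq b}}$ increases), and $\bigcup_b A_b = \{\tau_\root < \infty\}$: on the event $\{\tau_\root < \infty\}$, the trajectory up to $\tau_\root$ visits only bounded heights, so $A_b$ holds for all $b$ exceeding that bound, while conversely $A_b$ forces $\tau_\root < \infty$. Monotone convergence then identifies the limit in \eref{eq:rec} with $h(i) := \P(\tau_\root < \infty \mid X_0 = i)$, and the criterion becomes $h(i) = 1$ for every $i \in c_T(\root)$. The function $h$ is $[0,1]$-valued and harmonic on $T \setminus \{\root\}$ with $h(\root) = 1$, so at any $v \neq \root$ where $h(v) = 1$, harmonicity forces $h(w) = 1$ for every $w$ with $\U_{v,w} > 0$. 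Propagating this equality along positive-probability paths supplied by the irreducibility criterion of \Cref{lem:irr}(i), one extends $h \equiv 1$ from $c_T(\root)$ to all of $T$; combined with the identity $\P(\tau^+_\root < \infty \mid X_0 = \root) = \U_{\root,\root} + \sum_{w \neq \root} \U_{\root,w}\,h(w)$, this gives recurrence of $\root$, hence of $\U$. The converse direction is classical: any irreducible recurrent chain hits every state almost surely from every starting point.

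The main subtlety lies in the propagation step of (i): one must combine the harmonic equation for $h$ with the specific irreducibility structure of AUD chains (\Cref{lem:irr}(i)) to spread the equality $h = 1$ from the first-generation nodes to the whole tree. The remainder—the determinantal formula (ii) and the monotone-convergence identification of the limit—reduces to standard substochastic-matrix arguments together with Cramer's rule.
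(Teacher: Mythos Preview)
Your argument for (ii) via Cramer's rule is correct and equivalent to the paper's heap-of-cycles computation. The monotone-convergence identification of the limit in \eref{eq:rec} with $h(i)=\P_i(\tau_\root<\infty)$ is also correct and makes the forward direction of (i) immediate---cleaner, in fact, than the paper's tightness argument.

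The gap is in the propagation step of the reverse direction, and it cannot be filled because the implication is actually false. Harmonicity of $h$ at $v\neq\root$ lets you push $h=1$ from $v$ to every $w$ with $\U_{v,w}>0$, but this reaches only those nodes accessible from $c_T(\root)$ via paths that avoid $\root$, and \Cref{lem:irr}(i) does not guarantee such paths exist to every node of $T_i$. Concretely, take $T$ to be the half-line $\root,0,00,000,\ldots$ with $\U_{\root,00}=1$, $\U_{0,\root}=1$, and $\U_{0^k,0^{k-1}}=\tfrac14$, $\U_{0^k,0^{k+1}}=\tfrac34$ for $k\geq 2$. This is an irreducible \AUD chain; the unique child of $\root$ is $0$, and since $\U_{0,\root}=1$ we have $h(0)=1$, so \eref{eq:rec} holds. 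Yet from $\root$ the chain jumps deterministically to $00$ and then drifts to infinity with probability $\tfrac23$, giving $\P_\root(\tau_\root^+<\infty)=\tfrac13$: the chain is transient. Your propagation cannot reach $00$ from $0$ without passing through $\root$, where the harmonic identity is unavailable. The paper's own proof of this direction shares the defect: the positive-weight path from $i^\star=0$ to $u=00$ that it invokes necessarily passes through $\root$, and the subsequent claim that $\P_{i^\star}(\tau_{i^\star}^+<\infty)<1$ forces \eref{eq:rec} to fail is precisely what this example refutes.
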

\begin{proof}
  \bir \itr If $X$ is recurrent, then starting from any $i\in C_T(\root)$, the return time  $\tau^+_i$ to $i$ is finite a.s. After each passage at $i$, $X$ has probability $\U_{i,\root}$ to go to $\root$ in one step, otherwise, it makes a step inside the subtree $T_i$. 
  As a random variable over $\mathbb{N}$, $\tau^+_i$ is tight: for all $ \varepsilon>0$, there exists an integer $B$ such that $\P_i(\tau_i^+>B)\leq \varepsilon$. Since, going from a node at height $\geq b$ to $i$ takes at least $b-1$ steps, we see that for $b$ large enough, $\P_i(\tau_i^+<\tau_{T_{\geq b}})\geq 1-\varepsilon$. From here, the argument is routine: since  $\U_{i,\root}<1$,  up to increase $b$, this implies also $q_i(b):=\P_i(\tau_i^+<\tau_{D\geq b}~|~X_1\in T_i)\geq 1-\varepsilon$ where $T_i$ is the subtree rooted at the children $i$ of the root. 
Hence, $\P(\tau_\root^+<\tau_{D\geq b}~|~X_0=i)= \sum_{k\geq 0}\U_{i,\root}(1-U_{i,\root})^{k}q_i(b)^k$ (since each failure to go from $i$ to $\root$, arises with probability $(1-U_{i,\root})$ and another try will follow with probability $q_i(b)$. Since $q_i(b)$ can be taken as close to 1 as wished, we get the result.\par
Reciprocally, it suffices to prove that if \eref{eq:rec} holds, then $\P(\tau_\root^+<+\infty~|~ X_0=\root)=1$ where $\tau_\root^+$ is the return time to $\root$.
 Let us assume that $\P(\tau_\root^+<+\infty~|~ X_0=\root)<1$. By the law of total probability, this implies that there exists $u\in \cup_i T_i = T\setminus\{\root\}$,  $\P(\tau_\root^+<+\infty~|~ X_1=u, X_0=\root)<1$. The node $u$ belongs to one of the trees $T_i$, say $T_{i^\star}$, and is then, by irreducibility, reachable from $i^\star$ by a finite path $(v_0=i^{\star},\cdots,v_k=u)$ for some $k\geq 0$, having a positive weight, and then again by the law of total probability, it appears that  $\P(\tau_\root^+<+\infty~|~ X_1=u, X_0=\root)<1$ implies that $\P(\tau_i^{+}<\infty~|~X_0=i)<1$, and then, by the reasoning above, \eref{eq:rec} can not hold.
\itr We need to sum the weight of all paths $(u_0,\cdots,u_\ell=\root)$ of any length , starting at $u_0=i$, hitting $\root$ for the first time at the end, staying in $T_i$ deprived of $T_{\geq h}$ all along, and weighted by $\prod_{j=1}^{\ell} \U_{j-1,j}$. All these paths ends by the step $i\to \root$, and the rest is a heap of cycle whose maximal pieces is incident to $i$ the result follows from classical results in combinatorics  (see the proof of Theorem 2.8 in \cite{LFJFM} by the same authors; see also e.g.  Viennot \cite{VX}, Krattenthaler \cite{CK}).
\eir
\end{proof}

\begin{proof}[Proof of \Cref{theo:1.3}]
	
	\bls \underbar{Finite tree case.} Assume first that $\U$ is an \AUD irreducible transition matrix on a finite tree $T$.
	In this finite state case, there is uniqueness of the invariant measure and positive recurrence.
	\par
	It remains to prove that \eref{eq:pi} gives the invariant measure of $\U$.
	We will use \eref{eq:grte} and \eref{eq:grte2}, and then define for $u\in T$, the set of spanning trees  ${\sf SP}(T,E_\U,u)$, with root $u$, on the graph with vertex set $V=T$, and set of edge sets corresponding to (almost upper directed) possible transitions  $E_{\U}:=\{(i,j)\in T, \U_{i,j}>0\}$. The edges of a tree are oriented towards its root: thus, for a spanning tree rooted at $u$, there is an edge going out from each vertex, except from $u$.

    $\star$ An illustration in \Cref{spannn.fig} could help the reader to understand more quickly the argument.

    \begin{figure}[htbp]
	\centerline{\includegraphics{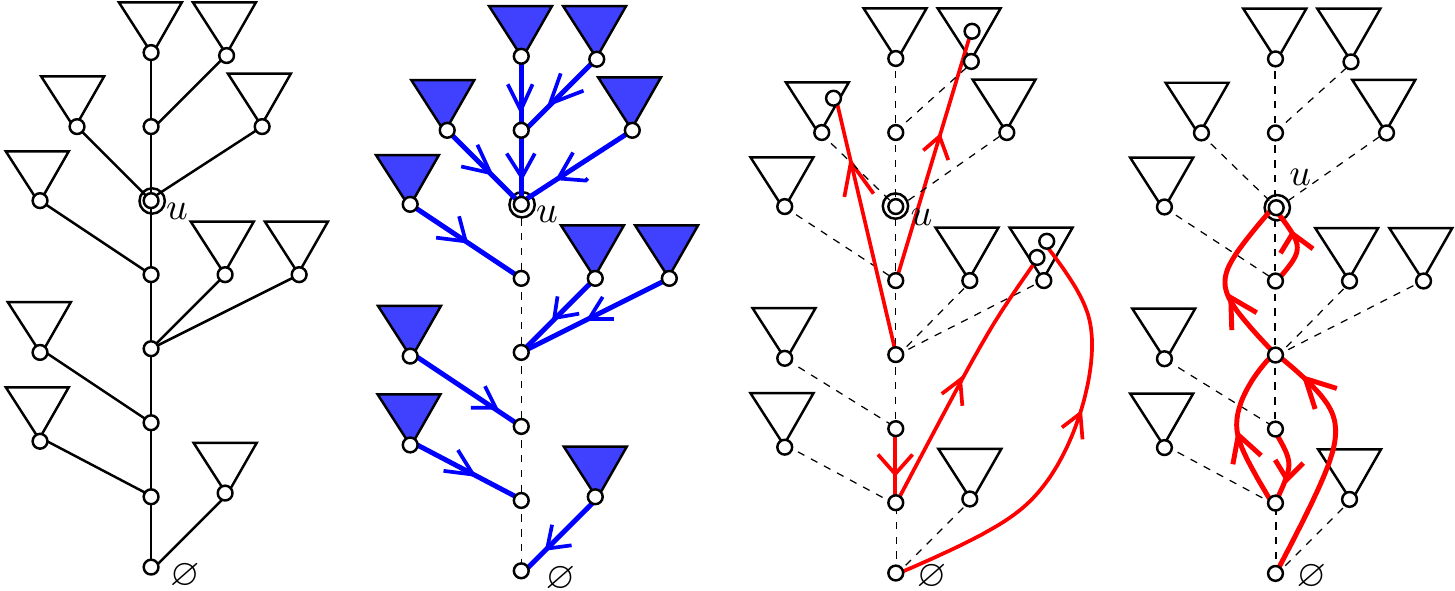}}
	\captionn{\label{spannn.fig}Each triangle represents a finite or infinite subtree. On the second picture, the blue edges (including those, not drawn, in the subtrees) are all oriented downwards: these edges are in all spanning trees rooted at $u$: the starting point of each oriented edge is any node which is not in $\cro{\root,u}$. The set of blue edges form connected components, that are subtrees, each of them rooted a different vertices of $\cro{\root,u}$ (this is the forest $F_u=(f_v,v\in\cro{\root,u})$). On the third picture, an example of what could be the set of edges  going out from $\cro{\root,u}$ of a spanning tree rooted at $u$: either directed to the parent, or to a node among their descendants. On the last picture, the ``spanning tree condition'' on $\cro{\root,u}$. If we redirect each red edge toward the root of the blue component containing its second extremities, this forms a (red) spanning tree of $\cro{\root,u}$: this condition is necessary and sufficient for the blue edges and red edges to form, together, a spanning tree of the global tree, rooted at $u$.}
    	\end{figure}
     Consider $\cro{\root,u}$ the ancestral line of $u$ in $T$.  
We claim that ${\sf ForcedEdges}(u)$, the set of edges  $(v,p(v))$  for all $v\in (T\setminus \cro{\root,u})$, is a subset of the edge set of any $(\tau,u)$ in ${\sf SP}(T,E_\U,u)$. Indeed since the edges of $(\tau,u)$ are oriented toward $u$, each edge  $(v,p(v))\in{\sf ForcedEdges}(u)$ is needed to get out of $T_v$. Now, color the edges of ${\sf ForcedEdges}(u)$  in blue, together with the vertices they contain. 
Each blue connected component $f$ is a tree, and contains a unique element $v$ of $\cro{\root,u}$, which is the root of $f$.  Thus, each node $v\in\cro{\root,u}$ is the root of a blue tree $f_v$, where, $f_v$ may be reduced to its root. Each node $w$ of $T$ belongs to a single tree $f_{R(w)}$ of the forest  $F_u := (f_v,v\in\cro{\root,u})$.
In other words, $R(w)$ is the identity of the root of this tree, a node of $\cro{\root,u}$.

 Let us now characterize the other edges of a spanning tree $(\tau,u)\in{\sf SP}(T,E_\U,u)$, those starting from the nodes $\crol{\root,u}$. 
  \begin{lem}\label{lem:ctree}
    Consider a set of edges, with one outgoing edge from each the elements of $\crol{\root,u}$~:
    \[{\sf AddEdges}:=((v,x(v)), v \in \crol{\root,u})\] where $x(v)\in T$ for all $v$. 
		The set ${\sf AddEdges}$ union with  ${\sf ForcedEdges}(u)$  is the set of edges of an element $(\tau,u)$ of ${\sf SP}(T,E_\U,u)$ if the tuple $\l[(v, R(x(v))),v \in \crol{\root,u}\r] $ is a spanning tree $(\tau',u)$ of $\cro{\root,u}$. 
\end{lem}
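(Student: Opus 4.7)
The plan is to reduce the statement to a directed-acyclicity check on the combined edge set, and then to set up a bijection between directed cycles there and directed cycles in the tuple obtained after contracting each blue component to its root.

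First I would carry out the edge count: $|{\sf ForcedEdges}(u)|=|T|-|\cro{\root,u}|$ and $|{\sf AddEdges}|=|\cro{\root,u}|-1$, so their union has exactly $|T|-1$ edges, with precisely one outgoing edge from each vertex of $T\setminus\{u\}$ and none from $u$. Under these two constraints, being a spanning tree rooted at $u$ is equivalent to containing no directed cycle. Next I would describe the iterated successor dynamics: within any blue component $f_v$, the edges $(w,p(w))$ strictly decrease $|w|$, so iterating from any $w$ one reaches $R(w)$, the unique element of $\cro{\root,u}$ in the blue component of $w$; from $v\in\crol{\root,u}$ the added edge sends $v$ to $x(v)$, and then blue edges take us to $R(x(v))$. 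Consequently, every trajectory in the combined set eventually enters $\cro{\root,u}$, where the successor map coincides with $v\mapsto R(x(v))$, which is exactly the successor map of the tuple.

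The key step, and the only delicate one, is the cycle correspondence. For the forward direction, since height strictly decreases along ${\sf ForcedEdges}(u)$, any directed cycle in the combined set must visit $\cro{\root,u}$; recording its visits in order yields $v_1,v_2,\ldots,v_k=v_1$ with $v_{i+1}=R(x(v_i))$, a directed cycle of the tuple (the degenerate case $R(x(v))=v$ gives a self-loop in the tuple, which equally lifts to a cycle $v\to x(v)\to\cdots\to v$ in the combined set via blue edges of $f_v$). Conversely, any cycle $v_1\to\cdots\to v_k=v_1$ of the tuple lifts by inserting, between $v_i$ and $v_{i+1}$, first the added edge $(v_i,x(v_i))$ and then the unique blue path from $x(v_i)$ up to $R(x(v_i))=v_{i+1}$.

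I expect the cycle bookkeeping to be the only subtle point; in particular, ruling out putative cycles that never enter $\cro{\root,u}$ is handled by the strict height decrease along ${\sf ForcedEdges}(u)$, and once the bijection is in place, combining it with the counting argument of the first paragraph yields the equivalence.
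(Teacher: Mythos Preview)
Your proof is correct and follows the same idea as the paper's: the paper phrases it as contracting each blue component $f_w$ to its root $w$ and noting that this preserves acyclicity (since within each $f_w$ the blue edges are directed rootward, so no cycle is created or destroyed), which is exactly your cycle correspondence rewritten---recording the successive visits to $\cro{\root,u}$ of a directed cycle in the combined set is precisely its image under that contraction. Your explicit edge-count and out-degree reduction of ``spanning tree rooted at $u$'' to ``no directed cycle'' is left implicit in the paper but is the same reduction.
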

\begin{rem}\label{rem:qgsr} The "if" in the lemma becomes an "if and only if" if all the edges weight $\U_{v_1,v_2}$ are positive between each node $v_1$ and its descendant $v_2$ (by hypothesis $\U_{v,p(v)}$ are always positive).
\end{rem}
	
\begin{proof} Given that the blue edges are already present in the forest, we need to understand the structure of the other edges of  $(\tau,u)$, i.e., those whose starting point in $\crol{\root,u}$. 
		From each node $v\in\crol{\root,u}$, there is a single outgoing edge $(v,x(v))$ and $x(v)$ must belong to an element $f_{R(x(v))}$ of the $(f_w, w\in \cro{\root,u})$ (since these trees form a partition of $T$). The coefficient $\U_{v,x(v)}$ must be positive, so $x(v)$ is either a descendant of $v$, or its parent; from $x(v)$, following the outgoing blue  edges forms a path leading to $R(x(v))$, but these edges are not the subject of this discussion, since they are already fixed (they are blue). Intuitively, since there is a fixed path from all the nodes of $f_{R(x(v))}$ toward $R(x(v))$, one can contract any tree $f_w$ and consider it as a vertex $\bar{w}$ (which can be identified with $w$), locally, in the proof. Notice that if several extreme points $x(v_1)$ and $x(v_2)$ belong to the same tree $f_w$, then, since in $f_w$ the blue paths are directed to the root $w$, these paths together do not form a cycle (the contraction does not destroy any cycle). The contraction is coherent with this point of view (the edges are then directed to $\bar{R(x(v))}$).\par
		Once the contractions have been done, the new edges $(\bar{v},\bar{R(x(v))})$ must form a spanning tree $(\tau',u)$ of $(\bar{v},\bar{v}\in \cro{\root,u})$ (because if it is not the case, the union of ${\sf AddEdges}$ union with  ${\sf ForcedEdges}(u)$ would be disconnected).
      \end{proof}

      Extract from a spanning tree $(\tau,u)$, the set of edges
      \[{\sf SetRedEdges}(\tau,u)=\l\{(v,x(v)) \in E(\tau,u), v\in \crol{\root,u}\r\}\] going out from $\crol{\root,u}$.
      Following \Cref{lem:ctree} and \Cref{rem:qgsr}, we denote by
      \[{\sf SetOfSetRedEdges} = \l\{\l[ (v, x(v)), v \in \crol{\root,u}\r] \textrm{ s.t.} \l[(v,R(x(v))),v \in \crol{\root,u}\r] \textrm{ is a spanning tree of }\cro{\root,u}\r\}.\]
As said in \Cref{rem:qgsr}, in general the set of all possible ${\sf SetRedEdges}(\tau,u)$ is only contained\linebreak in ${\sf SetOfSetRedEdges}$, because  ${\sf SetOfSetRedEdges}$ contains elements, that contain ``red edges'' with null\linebreak weight. However, since we are now dealing with weights, adding elements with null weights amounts to adding negligible sets, and this is what we will do.
       Set
      \ben{\sf RedWeight}(u)&=& \sum_{\l[ (v, x(v)), v \in \crol{\root,u}\r]\atop{\in {\sf SetOfSetRedEdges}}} \prod_{v \in \crol{\root,u}}\U_{v,x(v)},\\
{\sf BlueWeight}(u)&=&\prod_{v\in T\setminus \cro{\root,u}} \U_{v,p(v)}
      \een
      this latter being the total weight of ${\sf ForcedEdges}(u)$. 
      We have that
      \[W(u) := {\sf RedWeight}(u)\times{\sf BlueWeight}(u) \] is the total weight of all spanning trees rooted at $u$ and we need to prove that $W(u)= {\sf Cst}.\pi(u)$ given in \eref{eq:pi}. ~\\	
	In order to complete the proof in the finite case, we need three additional ingredients:\\
	(a) Set ${\sf Factor}(u)=\prod_{v \in \cror{\root,u}}\U_{v,p(v)}$, the denominator in \eref{eq:pi}. Since $\{(v,p(v)):\cror{\root,u}\}$ are the edges we need to add to ${\sf ForcedEdges}(u)$ to get all the edges of $T$ oriented toward $\root$, we have
    \[{\sf Factor}(u)\times{\sf BlueWeight}(u)= \prod_{v\in T \setminus \{\root\}} \U_{v,p(v)}={\sf Cst}.\]
    Hence since we work up to a multiplicative factor, we will use $1/{\sf Factor}(u)$ instead of ${\sf BlueWeight}(u)$, since the first remains under control when the tree is infinite.\\
 	\noindent(b) Let us compute ${\sf RedWeight}(u)$. 
    From the ``contraction'' point of view, an edge from $a\in\crol{\root,u}$ to some node $\bar{b}$ represents the total weight of edges going from $v$ to $f_b$: the tree $f_b$ itself, equals to $T_b\setminus T_{s(b,u)}$ (the subtree $T_b$ deprived from $T_{s(b,u)}$). The weight of all edges starting at $a$ and with second extremity in $f_b$ is then ${}^u\U_{a,b}$. By the matrix tree theorem we then have
	\[
	{\sf RedWeight}(u)=\det ({\sf Laplacian}(^u\U)^{(u)})
	\]
	$(c)$ It remains to justify that we can choose the ${}^u\U_{a,a}$ as we wish: this is standard; in a weighted graphs, the weight of the spanning trees is independent of the weights of the loops, and the user is then free to change them according to his personal motivations... This is what we have done by imposing the values ${}^u\U_{a,a}$ such that the total weights $\sum_{v}{}^u\U_{a,v}=1$ for all $a$. From this choice we find that
	\[
	\det ({\sf Laplacian}(^u\U)^{(u)}) = \det ((\Id-\uU)^{(u)})
	\]
	This concludes the proof for the finite case.
 	\medskip
	
	\noindent\bls \underline{When $T$ is an infinite (local finite) tree, }  write the balance equation at $u$ for an invariant measure $\rho$ (normalized so that $\rho(\root)=1$):
	\begin{align}\label{eq:inv}
		\rho(u)=\sum_{v \in \cro{\root, u}} \rho(v) \U_{v,u}+\sum_{c \in c_T(u)} \rho(c)\U_{c,u}.
	\end{align}
	We want to prove that the measure $\rho=\pi$ given in \eqref{eq:pi} solves this equation.  
	Thus, if we write the invariance equations for all nodes $u$ below a given level $h$, then all  these equations together involve only $\rho$ and transitions up to level $h+1$. Let us cut down this tree! \par
    Consider a transition matrix $\U^{(h+2)}$ on say, the finite tree $T^{(h+2)}$ of height $h+2$ coinciding with $T$ up to this level, and define  $\U^{(h+2)}_{u,v}$ as to be $\U_{u,v}$ if $|v|<h+2$, and $\U^{(h+2)}_{u,v}=\U_{u,T_v}$ if $|v|=h+2$. Then we get that $\pi^{(h+2)}$ the invariant measure of $\U^{(h+2)}$ on the finite tree $T^{(h+2)}$ satisfies, for all $u$ such that $|u|\leq h+1$ the equation
	\[\pi^{(h+2)} \U^{(h+2)}=\pi^{(h+2)}\]
    and by uniqueness $\pi^{(h+2)}$, it is the $h$-invariant measure of $\U^{(h+2)}$ given in \eref{eq:pi}.
	For $u$ such that $|u|\leq h+1$, the equation 
	\ben\label{eq:dq2}
	\pi^{(h+2)}_u=\sum_v \pi^{(h+2)}_v \U^{(h+2)}_{v,u}
	\een 
	can be represented by the formula of $\pi$ instead, on $T^{(h+2)}$; indeed, a quick inspection shows that since the weights involved in the computation of $\pi(u)$ using $\U$ and those for the computation of $\pi^{(h+2)}_u$ using $\U^{(h+2)}$ are the same (these are the transitions on the path on $\cro{\root,u}$ and the weight toward the subtree $f_v$ rooted on $\cro{\root,u}$ thanks to the fact that we took the total $\U^{(h+2)}_{u,v}=\U_{u,T_v}$ if $|v|=h+2$).
	
	From this, we deduce that $\pi_u=\sum_v \pi_v \U_{v,u}$ for all $u$ such that $|u|\leq h$, and since $h$ can be taken as large as wanted, for all $u\in T$.
\end{proof}

\subsection{Additional recurrence criteria: Trees with a finite/infinite number of ends}
\label{sec:qdgg}
It may seem plausible that a Markov chain on  an infinite $T$ with a given irreducible \AUD transition matrix $\U$ satisfies is transient if and only if there exists an end on $T$ that is transient. However, the restriction/projection of a Markov chain on a subspace is not always a Markov chain, so that some work is needed to give a meaning to this intuition, and to observe, that this ``if and only if'' is somehow valid when the tree has a finite number of ends; however, it may fail when the number of ends is infinite. The aim of this section is to shed some lights on the principles into play.  

\subsubsection{Construction of a transition matrix per end.}
\label{sec:CMKE} 

Let $\U$ be an irreducible \AUD transition matrix on an infinite tree $T$, and let ${\sf Ends}$ be its set of ends; this set can be finite, infinite countable, or infinite uncountable. 

Recall that an end $\p$ is an injective path in $T$, starting at $\root$. Two different ends $\p$ and $\q$ share a finite path $\cro{\root,L(\p,\q)}$, adjacent to the root, and then they separate forever. The node $L(\p,\q)$, will be called in the sequel the last common node of $\p$ and $\q$.

We associate with each end $\p$, a subtree $T^\p$ of $T$, consisting of the path $\p$ together with the set of \underbar{finite} subtrees rooted at the neighbors of $\p$.
Hence, for $\p\neq \q$, the intersection of $T^\p$ and $T^\q$ is made of $\cro{\root, L(\p,\q)}$ together with the finite subtrees rooted at the neighbors of this segment.

Let us now define a projection $\wp_\p:T\to T^\p$ as follows:\\
-- if $v\in T_\p$ then set $\wp_\p(v)=v$.\\
-- if $v\notin T_\p$ then $\wp_\p(v)$ is the node of $\p$ that is the closest to $v$ (the highest ancestor of $v$ which is in $\p$).

We will now define a  transition matrix $\U^{\p}$ on $T^\p$.
Intuitively, seeing the pair $(T,\U)$ as weighted graph (where $u\to v$ in $T$ is weighted $\U_{u,v}$), the  transition matrix $\U^\p$ is obtained by redirecting the edges $u\to v$ starting at $u\in T^\p$ and going to $v$, where $v$ is outside $T^\p$, to $\wp_\p(v)$. Formally, for any two nodes $u,v$ in $T^\p$,
\[U^{\p}_{u,v} := \sum_{w \in T} \U_{u,w} \1_{\wp_\p(w)=v}.\]
Observe that $\U^{\p}$ is always an irreducible \AUD  transition matrix (in particular, it is not defective).

\begin{rem}\label{rem:passage} If $\U$ is recurrent and $X=(X_i,i\geq 0)$ is a $\U$-Markov chain, then $\U^\p$ can be seen to be the  transition matrix of the Markov chain $(X_{t_k},k\geq 0)$ where $(t_k)$ is the sequence of passage times on $T^\p$. It is immediate that in this case $\U^{\p}$ is also recurrent; when $\U$ is not recurrent, then $(X_k)$  may visit only a finite number of times $T^\p$: in this case, the process  $(X_{t_k},k\geq 0)$ is not a Markov chain, its  transition matrix is defective (while $\U^\p$ is not defective).
  \end{rem}

  \begin{lem}\label{lem:qfdsf} Assume that $\U$ is an irreducible \AUD transition matrix on an infinite tree $T$. If there exists an end $\p\in T$ such that $\U^{\p}$ is transient, then $\U$ is transient.
  \end{lem}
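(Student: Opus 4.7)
The plan is to argue by contrapositive: assume $\U$ is recurrent and show that then $\U^\p$ must be recurrent as well. First I would note the basic geometric fact that $\root\in T^\p$ (since $\p$ starts at $\root$), so if the $\U$-Markov chain $X$ started at $\root$ returns to $\root$ infinitely often a.s., it visits $T^\p$ infinitely often; in particular the successive passage times $0=t_0<t_1<t_2<\cdots$ of $X$ to $T^\p$ are all a.s.\ finite.

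The heart of the proof is to identify the law of the skeleton chain $Y_k:=X_{t_k}$. By the strong Markov property applied at each $t_k$ it suffices to compute, for $u,v\in T^\p$, the one-step transition $\P(Y_1=v\mid Y_0=u)$. Conditioning on $X_1=w$, there are two cases. If $w\in T^\p$, then $t_1=1$, $Y_1=w=\wp_\p(w)$, and this contributes $\U_{u,w}$ to $\U^\p_{u,\wp_\p(w)}$. If $w\notin T^\p$, then by construction of $T^\p$ the node $w$ belongs to an \emph{infinite} subtree $T_b$ hanging off $\p$, where $b$ is a child of $p(b)\in \p$ with $b\notin \p$; the key \AUD{} observation is that the only edge leaving $T_b$ is $b\to p(b)$, because from any node in $T_b$ an \AUD{} chain can only descend (staying in $T_b$) or jump to the parent, so any exit trajectory must pass through $b$ and then to $p(b)=\wp_\p(w)$. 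By assumed recurrence, $X$ a.s.\ eventually reaches $\root$, so it a.s.\ exits $T_b$, and hence the first re-entry to $T^\p$ after time $1$ is at $\wp_\p(w)$. Summing over $w$ with $\wp_\p(w)=v$ therefore yields
\[
\P(Y_1=v\mid Y_0=u)=\sum_{w\in T}\U_{u,w}\,\1_{\wp_\p(w)=v}=\U^\p_{u,v},
\]
so $(Y_k)_{k\ge 0}$ is a Markov chain with transition matrix $\U^\p$.

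Finally, since $X$ visits $\root$ infinitely often a.s.\ and $\root\in T^\p$, the skeleton chain $(Y_k)$ also visits $\root$ infinitely often a.s. Combined with the irreducibility of $\U^\p$ (noted just after its definition), this forces $\U^\p$ to be recurrent, contradicting the assumption of transience. The only delicate point, and the step I expect to be the main obstacle to write cleanly, is the second case above: one must use the \AUD{} constraint to justify that exits from any infinite sub-tree $T_b$ off $\p$ are funneled through the unique edge $b\to p(b)$, so that the geometric projection $\wp_\p$ and the probabilistic first-return map really agree.
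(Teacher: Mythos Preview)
Your argument is correct and is essentially the paper's own proof, which proceeds by contrapositive and invokes the fact (stated in \Cref{rem:passage}) that under recurrence of $\U$ the process $(X_{t_k})$ observed at passage times in $T^\p$ is a $\U^\p$-Markov chain, hence recurrent. You have simply unpacked this remark in more detail, using the \AUD{} constraint to verify that exits from the infinite subtrees $T_b$ are funneled through $b\to p(b)=\wp_\p(w)$, which is exactly the justification the paper leaves implicit.
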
 
  \begin{proof} By contradiction, assume that $\U$ is recurrent and $\U^\p$ is transient on an injective path $\p$. Let $(X_k,k\geq 0)$ be a $\U$ Markov chain. By \Cref{rem:passage}, $\U^\p$ coincides with the  transition matrix of the Markov chain $(X_{t_k},k\geq 0)$, obtained by restricting of $(X_k)$ to $T^\p$, and clearly $(X_{t_k})$ is recurrent on $T^\p$ (since $(X_k)$ visits each node of $T$ infinitely often, its restriction too).
    \end{proof}
   See \Cref{ex:3} for an explicit example.

  \begin{rem}\label{rem:gehtjryg}The reciprocal of \Cref{lem:qfdsf} is wrong: there are examples of \AUD $\U$ on infinite trees that are transient, while the projection $\U^\p$ on all ends are (positive) recurrent. For example, in the complete binary tree, assume that $\U_{u,p(u)}=9/23$, $\U_{u,u1}=\U_{u,u0}=7/23$ for all $u\neq \root$ (for $u=\root$, $p(u)$ does not exist, take $\U_{\root,\root}=9/23$ instead); let $\p$ be the (leftmost) injective path $\p=(\root, 0^1,0^2,0^3,\cdots)$, where $0^j$ stands for the word made with $j$ zeros. We have in this case $T^\p=\p$, and $\U^\p(0^j,0^j)=7/23$, $\U^\p(0^j, 0^{j-1})=9/23$,   $\U^\p(0^j, 0^{j+1})=7/23$ (but for the root, $\U^{\p}(\root,\root)=9/23$) which is the  transition matrix of a recurrent Markov chain (and even positive recurrent), whereas $\U$ is clearly not recurrent, since the distance to the root process is a transient Markov chain.
    \end{rem}
    
\begin{rem}[On the suppression of finite trees]
  For an infinite tree $T$, let $T^\infty$ be the tree obtained by keeping only the nodes of $T$ having an infinite number of descendants. This tree can also be seen as the union of the ends of $T$. Again, one can define a projection $\wp_\infty:T\to T^\infty$ by letting $\wp_\infty(v)$ being the closest node of $v$ in $T^\infty$. If $\U$ is irreducible and \AUD on $T$, let  $\U^\infty$ be defined by edge redirection, as follows : for $u,v \in T^{\infty}$,
  \[\U^\infty_{u,v}= \sum_{w \in T} \U_{u,w} \1_{\wp_\infty(w)=v}.\]
  -- Proving that $\U$ is recurrent on $T$ if and only if $\U^\infty$ is recurrent on $T^\infty$ is straightforward.\\
  -- For the positive recurrence, however, the picture is more complex : for example, imagine that $T$ contains a single end, that we identify with $\N$ in the sequel, and assume that a finite subtree $T_i$ hangs from each $i\in \N$. It is easy to construct a sequence $(T_i)$, for example $T_i$ being isomorphic to a segment $[0,f(i)]$, for which when one enters in $T_i$, the mean exit time of $T_i$ grows rapidly with $i$. It is then possible to find $\U$ that is not positive recurrent, when $\U^\infty$, in which all $T_i$ have been removed, is positive recurrent.   
\end{rem}

We state another lemma that will let us go further into the analysis of the positive recurrence. 
\begin{lem}\label{lem:central} Let $(X_k,k\geq 0)$ be a Markov chain following an irreducible \AUD transition matrix $\U$ on $T$. Then $\U$ is positive recurrent if and only if there   exist $v\in T$, and a  ball $B=B(v,r)$ with radius $r\geq 0$ for the graph distance in $T$,  such that 
	\[
	\sup_{w \in B} \E_w\l(\tau_B^+\r)<+\infty
	\]
	where $\tau_B^+=\inf\{k >0, X_k \in B\}$ for $X$ a $\U$-Markov chain. 
\end{lem}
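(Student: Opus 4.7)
The plan is as follows. The statement is essentially a general Markov chain criterion; the AUD hypothesis enters only through \Cref{theo:1.3} (existence of an invariant measure) and through local finiteness of $T$, which makes every ball $B(v,r)$ finite. The direct implication is immediate: if $\U$ is positive recurrent with invariant probability $\pi$, take $B = B(\root,0) = \{\root\}$; Kac's lemma gives $\E_\root[\tau_\root^+] = 1/\pi(\root) < \infty$, so the sup condition holds trivially.

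For the converse, set $N := \sup_{w \in B} \E_w[\tau_B^+] < \infty$ and recall $|B| < \infty$. I would proceed in two steps. \emph{Step 1 (recurrence).} For each $w \in B$, $\tau_B^+ < \infty$ holds $\P_w$-almost surely; iterating via the strong Markov property, any $\U$-Markov chain started in $B$ visits $B$ infinitely often a.s. Since $B$ is finite, at least one state $a \in B$ is visited infinitely often with positive $\P_a$-probability, hence $a$ is recurrent; irreducibility of $\U$ then extends recurrence to every state.

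\emph{Step 2 (positive recurrence).} Once recurrence is established, it suffices to exhibit one state $a$ with $\E_a[\tau_a^+] < \infty$. Introduce the successive return times to $B$ by $T_0 = 0$ and $T_{k+1} = T_k + \tau_B^+ \circ \theta_{T_k}$, and the induced chain $Y_k := X_{T_k}$ on the finite set $B$. By the strong Markov property, $Y$ is a Markov chain, irreducible on $B$ by the irreducibility of $\U$, hence (finite state space) positive recurrent. Fix $a \in B$ and let $R := \inf\{k \geq 1 : Y_k = a\}$; then $\E_a[R] < \infty$. Writing $\tau_a^+ = \sum_{k \geq 1}(T_k - T_{k-1})\,\1_{R \geq k}$, applying the strong Markov property at $T_{k-1}$, and using that $\{R \geq k\}$ is measurable with respect to $\mathcal{F}_{T_{k-1}}$, each term is bounded by $N \cdot \P_a(R \geq k)$; summing yields
\[
\E_a[\tau_a^+] \;\leq\; N \sum_{k \geq 1} \P_a(R \geq k) \;=\; N \cdot \E_a[R] \;<\; \infty,
\]
which gives positive recurrence of $\U$.

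The only real subtlety is the Wald-type step at the end of Step 2: the excursion lengths $T_k - T_{k-1}$ are not i.i.d., only bounded in \emph{conditional} mean by $N$, so the bound requires care with the filtration of the induced chain and the fact that $R$ is a stopping time for $Y$. Everything else is routine strong-Markov bookkeeping.
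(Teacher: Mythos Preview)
Your proposal is correct and follows essentially the same route as the paper: both arguments pass to the induced chain on the finite ball $B$, use its positive recurrence, and bound $\E_a[\tau_a^+]$ by the product of the expected number of visits to $B$ before returning to $a$ and the uniform bound $N$ on excursion lengths. Your Wald-type step is spelled out more carefully than in the paper (which simply writes $\E_{v^*}(\tau_{v^*}^+) \leq \E(F)\cdot\sup_{w\in B}\E_w(\tau_B)$ without further comment), and your explicit Step~1 establishing recurrence first is a clean way to ensure the induced chain is well defined; one small remark: your proof never actually uses \Cref{theo:1.3}, so the AUD hypothesis enters only through local finiteness of $T$.
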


\begin{proof}	The direct implication is immediate, since an irreducible and positive recurrent Markov chain satisfies $\E_u(\tau_u^+)<\infty$ for every $u\in T$, so it suffices to choose $r=0$ and any $v$ in $T$.

For the reciprocal, assume that there exists a pair $(v,r)$ such that  $\sup_{w \in B(v,r)} \E_w(\tau_{B(v,r)}^+)<+\infty$. Now, let $v^*$ be the closest  node to the root $\root$ in $B$.
To prove that the chain $(X_k,k\geq 0)$ is positive recurrent it will be enough to prove that $\E_{v^*}(\tau_{v^*}^+)<\infty$, since the chain is irreducible.\par
To prove this, we consider the chain $X^B$ defined as the original process restricted to $B$, that is, for $k\geq 0$, $X^B_k=X_{\nu_k}$, where $\nu_k$ is the $k$th visit time of $X$ in $B$; the chain $X^B$ is a.s. well defined, because all the random variables $\nu_k-\nu_{k-1}$ are a.s. finite, and this is a consequence of the fact that $\E(\tau_k-\tau_{k-1})\leq \sup_{w \in B} \E_w(\tau_B^+)<+\infty$. 
 Now, start a Markov chain  $X^B=(X_i^B,i\geq 0)$ at $v^*$ and let $F=\inf\{i>0, X^B_i = v^\star\}$ be the return time at $v^*$. From what it is said above, we can deduce that 
	\[
		\E_{v^*}(\tau_{v^*}^+) \leq \E(F) \times \sup_{w \in B} \E_w(\tau_B).
      \] 
Since, $X^B$ is an irreducible Markov chain of a finite state space, $\E(F)<+\infty$, which allows to conclude. 
\end{proof}

\subsubsection{Finite number of ends}

Let $T$ be a tree with a finite number of ends. 
For each end $\p$ of  $T$, let ${\sf Source}(\p)$ be the vertex of $\p$, the closest to $\root$, such that the subtree hanging from $u$ in $T$ does not intersect any other ends: if $T$ has a single end $\p$, then ${\sf Source}(\p)=\root$, otherwise, the parent of ${\sf Source}(\p)$ in $T$ belongs to several ends. \par
Denote by ${\sf Sources}(T)$, the set of sources of all ends of $T$.
We have
\[|{\sf Sources}(T)|=|{\sf Ends}(T)|.\]
The following theorem completes the picture of \Cref{lem:qfdsf} 
\begin{theo}\label{theo:fnerec} An irreducible \AUD transition matrix $\U$ on an infinite tree $T$ with a finite number of ends is:
  \bir
  \itr recurrent if and only if for all end $\p$ of $T$, the transition matrix $\U^{\p}$ is recurrent on $T^\p$,
  \itr positive recurrent if and only if for all end $\p$ of $T$, the transition matrix $\U^{\p}$ is positive recurrent on $T^\p$.
  \eir
\end{theo}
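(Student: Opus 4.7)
The plan is to handle both directions by passing through an auxiliary Markov chain on a finite subset of $T$; the forward implications follow almost immediately from Remark~\ref{rem:passage}, while the reverse ones rely on an excursion argument.

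\textbf{The core and the key dynamical observation.} Let $s_\p:={\sf Source}(\p)$ for each end $\p$ and define the \emph{core}
\[C \;:=\; T\setminus\bigsqcup_\p \big(T_{s_\p}\setminus\{s_\p\}\big).\]
Because ${\sf Ends}(T)$ is finite and $T$ is locally finite, $C$ is finite: it is the union of the finite segments $\cro{\root, s_\p}$ and the finite subtrees hanging off them. The crucial structural observation is that $T_{s_\p}\subseteq T^\p$ and, by the \AUD constraint, a $\U$-chain cannot exit $T_{s_\p}$ other than through the transition $s_\p\to p(s_\p)$. Consequently the entries of $\U$ and $\U^\p$ coincide on $T_{s_\p}$, so any sojourn of a $\U$-chain in $T_{s_\p}$ has the distribution of the corresponding sojourn of a $\U^\p$-chain.

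\textbf{Forward implications and (i)($\Leftarrow$).} By Remark~\ref{rem:passage}, if $\U$ is recurrent then $\U^\p$ is the transition matrix of the trace of a $\U$-chain $X$ on $T^\p$, which inherits recurrence of $\U$; moreover the $\U^\p$-return time to a state is bounded by the $\U$-return time, giving positive recurrence. For the reverse direction of (i), the dynamical observation combined with recurrence of $\U^\p$ ensures that $X$ a.s. hits $s_\p\in C$ in finite time from any $w\in T_{s_\p}\setminus\{s_\p\}$. Thus the trace $\hat X$ of $X$ on $C$ is a well-defined Markov chain on the finite set $C$, irreducible (inherited from $\U$) and hence positive recurrent; since $\root\in C$, $X$ returns to $\root$ a.s., proving recurrence of $\U$.

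\textbf{The main obstacle: (ii)($\Leftarrow$).} It remains to show $\E^\U_\root(\tau_\root^+)<\infty$. Writing $\hat\pi$ for the stationary distribution of $\hat X$ on $C$ and $g(v):=\E^\U_v[\tau_C^+]$ for the mean duration of an excursion starting at $v\in C$, the excursion decomposition of $X$ with respect to $\hat X$ yields
\[\E^\U_\root(\tau_\root^+) \;=\; \frac{1}{\hat\pi(\root)}\sum_{v\in C}\hat\pi(v)\,g(v),\]
so, since $C$ is finite, the task reduces to showing $g(v)<\infty$ for every $v\in C$. Decomposing on the first $\U$-step and using the dynamical observation,
\[g(v) \;=\; 1 \,+\, \sum_\p \sum_{w\in T_{s_\p}\setminus\{s_\p\}} \U(v,w)\, \E^{\U^\p}_w(\tau_{s_\p}).\]
This is the main obstacle: $\E^{\U^\p}_w(\tau_{s_\p})$ may grow with $w$, and the bound is non-obvious. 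The key observation is that any nonzero inner summand forces $s_\p$ to be a descendant of $v$ in $T$, which puts $v\in\cro{\root,s_\p}\subseteq\p\subseteq T^\p$; one then checks that $\U(v,w)=\U^\p(v,w)$ for such $w$, so the inner sum is dominated by the one-step expansion of $\E^{\U^\p}_v(\tau_{s_\p})$ under $\U^\p$, which is finite by positive recurrence of $\U^\p$. This closes the argument.
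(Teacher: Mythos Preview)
Your proof is correct and follows essentially the same route as the paper's. Both arguments define a finite ``core'' containing all the sources, observe that $\U$ and $\U^\p$ agree on $T_{s_\p}$ so that excursions outside the core are governed by $\U^\p$, and then bound the expected excursion length $g(v)$ by the one-step expansion of $\E_v^{\U^\p}(\tau_{s_\p})$. The only cosmetic difference is that the paper packages the conclusion through Lemma~\ref{lem:central} (finite expected return to a ball $B\supseteq{\sf Sources}(T)$), whereas you invoke the Kac-type identity $\E_\root^\U(\tau_\root^+)=\hat\pi(\root)^{-1}\sum_{v\in C}\hat\pi(v)g(v)$ for the induced chain directly; the underlying estimate is identical.
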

If the number of ends is infinite, the conclusion of the theorem may fail (see  \Cref{rem:gehtjryg} for an example of tree where all the $\U^p$ are positive recurrent, while $\U$ is transient).
\begin{proof}
\textbf{Proof of $(i)$}.
  The implication is explained in \Cref{rem:passage}: when $\U$ is recurrent, if one extracts from a $U$-Markov chain $(X_k,k\geq 0)$ the process $(X_{t_k},k\geq 0)$ corresponding to the successive passages in $T^\p$, then one obtains a $\U^\p$ Markov chain on $T^\p$, which is necessarily recurrent (since each node of $T^\p$ is visited infinitely often).
    
Let us establish the reciprocal, and for this assume that all the transition matrices $\U^\p$ are recurrent.

We call  ${\sf Core}(T)$ the finite subtree of $T$ obtained 
as the union of $C(T):=\cup_{\p \in {\sf Ends}(T)} \cro{\root, {\sf Source}(\p)}$ together with the finite trees of $T$ hanging from the neighbors of $C(T)$.

Now consider a $\U$-Markov chain $(Y_k,k\geq 0)$ on $T$, starting from any node $u\in T$, and let us prove that $\inf\{j\geq 1: Y_j=\root\}$ is almost surely finite, which is sufficient to conclude by irreducibility.

Assume that for some $k$, $Y_k$ is not in ${\sf Core}(T)$: we will prove that a.s., $(Y_{k+i},i\geq 0)$ will eventually come back in ${\sf Core}(T)$. Since 
 $Y_k\not\in {\sf Core}(T)$, there exists a unique end $\p$, such that $Y_k$ is in $T^\p$; in other words, $Y_k$ is a strict descendant of ${\sf Source}(\p)$. (Recall that $T_{{\sf Source}(\p)}$ is the notation for the tree containing the set of descendants of ${\sf Source}(\p)$ in $T$).

By construction, $\U^{\p}_{u,v}$ coincides with $\U_{u,v}$ when $u$ is in $T_{{\sf Source}(\p)}$. So as long as $Y_{k+i}$ for $i\geq 0$ is in $T_{{\sf Source}(\p)}$, the process  $(Y_{k+i},i\geq 0)$ can be seen as a $\U^\p$ Markov chain. Since this chain is recurrent, it will visit the node ${\sf Source}(\p)$ with probability 1.
Since  ${\sf Source}(\p)$ belongs to ${\sf Core}(T)$, this property implies that the Markov chain $(Y_k,k\geq 0)$ will visit a.s. infinitely often the finite set ${\sf Core}(T)$, which implies by a routine argument, that it visits all of the nodes of the finite set  ${\sf Core}(T)$ infinitely often, including the root $\root$. This concludes the proof of $(i)$.~\medskip

\noindent \textbf{Proof of $(ii)$} As explained in Remark \ref{rem:passage}, when $\U$ is recurrent, $\U^\p$ is the transition matrix of a $\U$-Markov chain observed uniquely at its passage time on $T^\p$. 
Hence for the direct implication, when moreover $\U$ is positive recurrent, then, for all end $\p$,
	\[
	\E_\root^{\p}(\tau_\root^+)\leq \E_\root(\tau_\root^+)<\infty, 
	\]
    where we have decorated with a $\p$ the sign $\E$, and will do the same for $\P$, in order to express that the computation are done with $\U^\p$ instead of $\U$.

	For the reciprocal, we will apply Lemma \ref{lem:central}. We start considering $r$ big enough such that $B=B(\root,r)$ contains all the sources. 
	Since for all $\p$, $\U^{\p}$ is irreducible and the root $\root$ is positive recurrent (for $\U^{\p}$, by hypothesis), one has that
	\[
	\E_{u}^{\p}(\tau_u^+)<\infty\quad \forall u\in T^\p \implies  \E_{u}^{\p}(\tau_B^+)<\infty\quad \forall u\in T^\p.
	\]
	Now we use this to study the expected first return time to $B$ when starting at $w \in B$ under the transition matrix $\U$. When conditioning on the first step of the chain on $T$ one gets 
      \begin{align}\label{eq:pi2}
		\E_w(\tau_B^+) = 1 + \sum_{u\in T\setminus B} \U_{w,u}\E_u(\tau_B^+)\quad \forall w \in B,
	\end{align}	where the 1 comes from the price of the first step, and the rest, decomposes the remaining cost in function of the first step.
      Notice that every $u\in T\setminus B$ belongs to a unique tree $T^\p$ (for a single end $\p$) which implies that  for each $u\in T^\p\setminus B$, 
	\[
	\E_u(\tau_B^+) = \E_u^{\p}(\tau_B^+)<\infty,
	\]
	since the transition probabilities in $T$ coincide with those of $T^\p$ outside $B$. 
	
	From \ref{eq:pi2} we deduce that it will be enough to prove that  $\sum_{u\in T\setminus B} \U_{w,u}\E_u^{\p}(\tau_B^+)<\infty$ for every $w\in B\cap T^\p$ to conclude that $\E_w(\tau_B^+)<\infty$. Hence
    \[
		\sum_{u\in T\setminus B} \U_{w,u}\E_u(\tau_B^+) = \sum_{\p \in{\sf Ends}(T)} \sum_{u\in T^\p\setminus B} \U_{w,u}\E_u^{\p}(\tau_B^+).\]
The number of ends being finite, it remains to prove that $\sum_{u\in T^\p\setminus B} \U_{w,u}\E_u^{\p}(\tau_B^+) <+\infty$ for every $w\in T^\p\cap B$. Now we know that for all end $\p$,
	\begin{align}\label{eq:pi3}
		\infty>\E_w^{\p}(\tau_B^+) = 1 + \sum_{u\in T^\p\setminus B} \U_{w,u}\E_u^{\p}(\tau_B^+)\quad \forall w \in B \cap T^\p.
	\end{align}
	From this we deduce that $\E_w^T(\tau_B^+)<\infty$.
	We finish by noticing that the ball $B$ has a finite number of points, we obtain the hypothesis of Lemma \ref{lem:central} which let us conclude that the chain in $T$ is positive recurrent.
\end{proof}
 
\subsubsection{Infinite number of ends}

When the number of ends is infinite, the main tools of the previous section leading to Theorem \ref{theo:fnerec} disappear. Remark \ref{rem:gehtjryg} gives an example of tree with an infinite number of ends, for which each end $\p$ has its associated transition matrix $\U^\p$ positive recurrent, while the global $\U$-Markov chain is transient.

When the number of ends is infinite, we are left with \Cref{theo:1.3}, which provides the \textsf{h}-invariant measure $\pi$ of $\U$, and then characterizes the positive recurrence. Proposition \ref{pro:rec} provides a general characterization of recurrence.

\section{\AUD transition matrices : more properties}
\label{sec:aud}

The next section provides an iterative way to compute the $h$-invariant measure $\pi$ on a finite or infinite tree $T$, by computing $\pi_u$ node by node, using only simple computations (which avoid the determinantal formulas). 

\subsection{On a leaf addition strategy}

We first provide the general principle:
\begin{pro} \label{pro:dadgty}
Let $\tau$ and $\tau'$ be two finite or infinite trees such that  $\tau'$ can be obtained from $\tau$ by the suppression of a leaf $\ell$, that is $\tau'=\tau \setminus\{\ell\}$.\par 
Assume that $\U$ and $\U'$ are two \AUD transition matrices on $\tau$ and $\tau'$, respectively, such that:\\
\bls if $u\in \tau'$ and $v\in \tau'\setminus\{p(\ell)\}$ then  $\U_{u,v}'=\U_{u,v}$,\\
\bls if $u\in \tau'$ then  $\U_{u,p(\ell)}'=\U_{u,p(\ell)}+\U_{u,\ell}$ ( the transitions in $\U$ toward $\ell$ are redirected toward $p(\ell)$ in $\U'$).\par
Then,\bir
\itr if $\rho'=(\rho'_u,u\in \tau')$ is a 1-left eigenvector of $\;\U'$, then   $\rho$  defined by
 \ben\label{eq:gtey}
\bpar{ccl}\rho_u&=& \rho'_u,~~\textrm{if }u\neq \ell,\\
\rho_\ell&=& \dis\sum_{v\in \cro{\root,p(\ell)}} \rho'_v U_{v,\ell } /\U_{\ell,p(\ell)}\epar \een
is a 1-left eigenvector of $\U$.
\itr If $\rho'$ is the $h$-invariant measure $\pi'$ of $\;\U'$, then $\rho$ defined in \eref{eq:gtey} is the  $h$-invariant measure $\pi$ of $\;\U$.
\eir 
\end{pro}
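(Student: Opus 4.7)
The plan is to verify (i) directly from the invariance equation $\rho = \rho\U$, split into three cases according to where $u$ sits relative to the removed leaf, and then to deduce (ii) by showing the determinantal formula for the $h$-invariant measure is preserved under the leaf removal/addition operation.

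For (i), I would check the equation $(\rho\U)_u = \rho_u$ at three types of nodes. At $u=\ell$, the leaf has only two outgoing transitions ($\U_{\ell,\ell}$ and $\U_{\ell,p(\ell)}$), so $1-\U_{\ell,\ell}=\U_{\ell,p(\ell)}$, and the equation rearranges to exactly the formula for $\rho_\ell$ in \eqref{eq:gtey}; this is immediate. At $u\in\tau'\setminus\{p(\ell)\}$, the contribution $\rho_\ell\U_{\ell,u}$ vanishes (since $\ell$ is a leaf, so $u$ is neither its parent nor a descendant), and $\U$ and $\U'$ agree on the remaining transitions, so the equation reduces to $(\rho'\U')_u=\rho'_u$. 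The delicate case is $u=p(\ell)$: one must combine the $\rho_\ell\U_{\ell,p(\ell)}$ term with the sum over $v\in\tau'$, then use the redirection rule $\U'_{v,p(\ell)}=\U_{v,p(\ell)}+\U_{v,\ell}$ together with the fact that $\U_{v,\ell}$ vanishes on $\tau'$ outside $\cro{\root,p(\ell)}$ (by the AUD property of $\U$), to recover $(\rho'\U')_{p(\ell)}=\rho'_{p(\ell)}$.

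For (ii), the cleanest path is to compare the determinantal formula \eqref{eq:pi} for $\pi$ on $\tau$ with that for $\pi'$ on $\tau'$, node by node. The key identity to establish is $\U_{a,T_b}=\U'_{a,T'_b}$ for all $a,b$ on the ancestral path of any $u\in\tau'$: either $T_b$ does not contain $\ell$ (so $T_b=T'_b$ and $\U=\U'$ on that set), or $T_b$ contains $\ell$, in which case the redirection $\U'_{a,p(\ell)}=\U_{a,p(\ell)}+\U_{a,\ell}$ exactly compensates for the missing node. A parallel observation that $\U_{v,p(v)}=\U'_{v,p(v)}$ for $v\in\cror{\root,u}$ (using that a sibling of $\ell$ cannot reach $\ell$ under AUD) then shows that the matrix $\uU$ and the denominator in \eqref{eq:pi} coincide on both trees; hence $\pi_u=\pi'_u$ for $u\in\tau'$. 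At $u=\ell$ the invariance equation for $\pi$ at the leaf is identical to the one that pinned down $\rho_\ell$ in (i), so $\rho_\ell=\pi_\ell$.

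The main obstacle will be the careful bookkeeping behind the identity $\U_{a,T_b}=\U'_{a,T'_b}$: one has to track in each case whether $p(\ell)\in T_b$ and verify that the absorbed mass $\U_{a,\ell}$ ends up in the correct slot of the sum $\U'_{a,T'_b}$. Once this identity is secured, the rest of (ii) reduces to routine substitution in the determinantal formula.
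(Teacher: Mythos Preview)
Your argument for (i) is correct and follows exactly the same three-case split as the paper: the case $u=\ell$ is the definition of $\rho_\ell$, the case $u\in\tau'\setminus\{p(\ell)\}$ follows because $\U$ and $\U'$ agree there and $\U_{\ell,u}=0$, and the case $u=p(\ell)$ is handled by expanding the redirection rule inside the balance equation for $\rho'_{p(\ell)}$ under $\U'$.

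For (ii) you take a genuinely different route. The paper argues by uniqueness on finite trees, then for infinite trees reduces to the finite case by observing that the determinantal formula at any node depends only on the projection of $\U$ onto the branch $\cro{\root,u}$, and any such projection can be realized by a finite transition matrix (truncate at level $|\ell|+1$). You instead verify directly that the ingredients of the determinantal formula \eqref{eq:pi} coincide for $\U$ and $\U'$: the identity $\U_{a,T_b}=\U'_{a,T'_b}$ (by a clean case split on whether $\ell\in T_b$) shows that the projected matrices $\uU$ and ${}^{u}\U'$ agree, and the AUD property (a sibling of $\ell$ cannot reach $\ell$) shows the denominators $\prod_{v\in\cror{\root,u}}\U_{v,p(v)}$ agree. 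Your approach is more computational but has the advantage of working uniformly for finite and infinite trees without any truncation device, and it makes explicit exactly which structural identities are doing the work. The paper's approach is shorter but leaves the reader to unpack why the branch projection is insensitive to the leaf removal, which is essentially the same bookkeeping you carry out.
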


\begin{rem}
  The previous result is a consequence of the ergodic theorem if $\tau$ is finite or $\;\U'$ positive recurrent. In these cases, there is a single invariant measure which is proportional to the a.s. asymptotic  fraction of time spent at each vertex. Now, $\U'$ is the transition matrix of a $\U$-Markov chain $(X_i,i\geq 0)$ observed at its passage time $(t_k,k\geq 0)$ in $\tau'$.  Using the ``normalization'' $\rho_\root=\rho'_\root=1$, we get equality of $\rho_u=\rho_u'$ for $u\neq \ell$ and from here, the second formula in \eref{eq:gtey} is the balance equation at $\ell$ (preservation of $\rho_\ell$ by $\U$).
  
  This argument can be turned into a complete proof in general trees, by working level by level as in the proof of \Cref{theo:1.3}, but will instead provide an algebraic argument below.
  \end{rem}
  \begin{rem}  
    This proposition implies that the projection $\U\to \U'$ can only reduce the dimension of the eigenspace of the eigenvalue 1.
\end{rem}
 
\begin{proof}[Proof of \Cref{pro:dadgty}]
  $(i)$ We only need to check that $\rho$ as defined in \eref{eq:gtey} satisfies for all $u\in \tau$,
  \ben\label{eq:qfHKgt}
  \rho_u = \sum_{v\in \cro{\root,u}}\rho_v \U_{v,u}+\sum_{v \in c_\tau(u)} \rho_v \U_{v,u}.
  \een
  For a given node $u$, this equation relates only the values  $\rho_w$ for $w$ in $\cro{\root,u}\cup c_\tau(u)$. Hence since we kept $\rho_w=\rho_w'$ for all $w\neq \ell$, and all the $\U_{u,v}'=\U_{u,v}$ for $v\neq p(\ell)$, we see that \eref{eq:qfHKgt} holds for all nodes $u \notin\{\ell,p(\ell)\}$ since $\rho'=\rho' \U'$.

  Formula \eref{eq:gtey} is designed so that $\rho_\ell$ satisfies the balance equation \eref{eq:qfHKgt} for $\U$, if $\rho'$ and $\rho$ coincide on $\tau'$. It remains to show that $\rho_{p(\ell)}=\rho'_{p(\ell)}$ satisfies also \eref{eq:qfHKgt} (which logically, consists in checking that \eref{eq:qfHKgt} is consistent with \eref{eq:gtey}, at $u=\ell$ and $u=p(\ell)$).
 
Write the balance equation for $\rho'_{p(\ell)}$ under $U'$: 
\ben\label{eq:rghtehzz} \rho'_{p(\ell)}= \sum_{v\in\cro{\root,p(\ell)}} \rho'_{v}\U'_{v,p(\ell)}+\sum_{v \in c_{\tau'}(p(\ell))} \rho'_v \U'_{v,p(\ell)}.\een
Replace in this formula  $c_{\tau'}(p(\ell))=c_{\tau}(p(\ell)) \backslash\{\ell\}$, and  $\U_{u,p(\ell)}'=\U_{u,p(\ell)}+\U_{u,\ell}$, and use \eref{eq:gtey} to get
\ben\label{eq:rghtehzz2} \rho'_{p(\ell)}&=& \sum_{v\in\cro{\root,p(\ell)}} \rho'_{v}\l(\U_{v,p(\ell)}+\U_{v,\ell}\r) +\sum_{v \in c_{\tau}(p(\ell)) \backslash\{\ell\}} \rho'_v \U'_{v,p(\ell)}.\\
&=& \sum_{v\in\cro{\root,p(\ell)}} \rho'_{v}\U_{v,p(\ell)}+\rho_\ell \U_{\ell,p(\ell)} +\sum_{v \in c_{\tau}(p(\ell)) \backslash\{\ell\}} \rho_v \U_{v,p(\ell)}
\een
Here we see that by taking $\rho_{p(\ell)}=\rho'_{p(\ell)}$ this equation is exactly the balance equation \eref{eq:qfHKgt} for $u=p(\ell)$.\\
$(ii)$ On finite trees, the statement is clear, by the uniqueness of the invariant measure. On infinite trees, since the computation of $\rho_\ell$ by \eref{eq:qfHKgt} and by the determinantal formula depends only on the branch $\cro{\root,\ell}$ (and the projection of the transition matrix $\U$ onto this branch using $\uU$ for $u=\ell$), it suffices to observe that any projection of $\U$ on a finite branch can be realized by a finite transition matrices (obtained, by cutting $T$ at the level $|\ell|+1$ for example, and making some weight adjustments). Hence, \eref{eq:qfHKgt} and the determinantal formulas must coincide at $\ell$. 
\end{proof}

\subsection{On the iteration of the leaf addition strategy  to compute the $\textsf{h}$-invariant distribution}\label{sec:leaf}

The leaf addition strategy allows us to compute the  $\textsf{h}$-invariant measure $\pi$  ``node by node'' using a very simple algorithm, in which the formula \eref{eq:pi} plays no role.  Given $\U$ an \AUD irreducible transition matrix on $T$, it will suffice to grow $T$ node by node to achieve this goal. Before doing this, it is useful to introduce the notion of ``projected transition matrix'', which provides the right structural decomposition:

\paragraph{Projected transition matrix.}
Let $\U$ be an irreducible \AUD transition matrix on a finite or infinite tree $T$.
If $t$ is a subtree of $T$ (containing $\root$), let us define a transition matrix $\U^{t}$ on $t$ that we will call the projection of $\U$ on $t$, by redirecting  all the transitions toward the exterior of $t$, towards the nearest node of $t$, as follows:
\ben\label{eq:proj}\U_{u,v}^{t}= \U_{u,T_v} - \sum_{w \in c_{t}(v)} \U_{u,T_w}, \textrm{ for }u,v\in t.\een
Take $\U$ an \AUD irreducible transition matrix on a finite or infinite tree $T$, and a sequence of trees $T{(k)}$ such that $T(k)$ is obtained from $T(k-1)$ by the addition of a leaf $u_k$, and such that $\lim_{n\to\infty}T(n)= \cup_k T(k)= T$ (for example, $T(k)$ defined as the set of the $k$ smallest nodes of $T$ for the breadth first order would do).

  To the sequence $(T(k))$ is then associated a sequence of projected transition matrices $(\U^{T(k)})$ obtained by projecting $\U$ on $T(k)$. By
  \eref{eq:proj},
 \ben\label{eq:proj2}\U_{u,v}^{T(k)}= \U_{u,T_v} - \sum_{w \in c_{t}(v)} \U_{u,T_w}, \textrm{ for }u,v\in T(k).\een
Now, we claim that for each $k$, the pair $(\U^{T(k-1)},\U^{T{(k)}})$ satisfies the hypothesis for $(\U',\U)$ in  \Cref{pro:dadgty}.
Indeed, since $u_k$ is an added leaf in $T(k-1)$, if $v\neq \{u_k,p(u_k)\}$ and $u\in T(k-1)$,
\[ \U^{T(k-1)}_{u,v}= \U^{T(k)}_{u,v},\]
and for $v= p(u_k)$,
\ben\label{eq:tkm1} \U^{T(k-1)}_{u,p(u_k)}= \U_{u,T_{p(u_k)}} - \sum_{w \in c_{T(k-1)}(p_{u_k})} \U_{u,T_w}\een
Since $u_k$ is a leaf of $T(k)$, and using that  $c_{T(k)}(p_{u_k})$ is made of $u_k$ and of $c_{T(k-1)}(p_{u_k})$,
\ben\label{eq:tk} \U^{T(k)}_{u,u_k}= \U_{u,T_{u_k}},\een and
\be \U^{T(k)}_{u,p(u_k)}&=&\U_{u,T_{p(u_k)}}-\sum_{w\in c_{T(k)}(p(u_k))} \U_{u,T_w}\\
&=&\U_{u,T_{p(u_k)}}-\U_{u,T_{u_k}}-\sum_{w\in c_{T(k-1)}(p(u_k))} \U_{u,T_w}.   \ee
Therefore

by \eref{eq:tk} and \eref{eq:tkm1}, this gives indeed
\ben\U^{T(k-1)}_{u,p(u_k)}=\U^{T(k)}_{u,p(u_k)}+ \U^{T(k)}_{u,u_k}.\een
A simple iteration of \Cref{pro:dadgty}, together with Kolmogorov extension theorem give immediately:
\begin{theo}\label{theo:lgp}Let $\pi^{T{(k)}}$ be  the ${\sf h}$-invariant measure of $\;\U^{T{(k)}}$ on the finite tree $T{(k)}$, normalized by $\pi_{\root}^{T{(k)}}=1$.
	\bir
	\itr 	$(a)$ the measures  $\pi^{T{(k)}}$ and $\pi^{T{(k-1)}}$ coincide on $T{(k-1)}$,\\
	$(b)$ to compute the value $\pi^{T{(k)}}_{u_k}$ (on the additional node) using $\pi^{T{(k-1)}}$,
	\ben \label{eq:fgeht}
	\pi^{T{(k)}}_{u_k}    ={\dis\sum_{v\in \cro{\root,p(u_k)}} \pi^{T{(k-1)}}_{v} \U_{v,T_{u_k}}}/ {\U_{u_k,p(u_k)}} .\een
	\itr The measures $(\pi^{T{(k)}},k\geq 0)$ are consistent (that is $\pi^{T{(k+1)}}$ and  $\pi^{T{(k)}}$ coincide on $T{(k)}$, for all $k$), so that there exists a measure $\rho$ on $T$, such that $\rho\big|_{T{(k)}}=\pi^{T{(k)}}$. This measure $\rho$ is the \textsf{h}-invariant measure $\pi$ of $\U$.
	\eir
\end{theo}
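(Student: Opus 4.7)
The proof is a direct iteration of \Cref{pro:dadgty} followed by Kolmogorov extension. The discussion immediately preceding the statement (equations \eref{eq:tkm1} and \eref{eq:tk} together with the surrounding computations) has already established that the pair $(\U^{T(k-1)}, \U^{T(k)})$ satisfies the two bulleted hypotheses of \Cref{pro:dadgty}, with $u_k$ playing the role of the new leaf $\ell$. One small identity I would make explicit first: since $u_k$ is a leaf of $T(k)$ and, by the \AUD property, the only non-descendant transition from $u_k$ is to $p(u_k)$, the telescoping in \eref{eq:proj} collapses and yields $\U^{T(k)}_{u_k,p(u_k)} = \U_{u_k,p(u_k)}$.

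With the hypotheses verified, part $(i)(a)$ is exactly \Cref{pro:dadgty}$(ii)$ applied to $(\U^{T(k-1)}, \U^{T(k)})$: the $\textsf{h}$-invariant measure of $\U^{T(k)}$ restricts on $T(k-1)$ to that of $\U^{T(k-1)}$. Part $(i)(b)$ then reads off from formula \eref{eq:gtey} of \Cref{pro:dadgty}$(i)$ specialized to $\ell = u_k$: substituting $\U^{T(k)}_{v,u_k} = \U_{v,T_{u_k}}$ (this is \eref{eq:tk}) in the numerator and the denominator identity just noted produces exactly \eref{eq:fgeht}.

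For $(ii)$, the consistency of $(\pi^{T(k)})_{k\geq 0}$ is exactly $(i)(a)$, so Kolmogorov's extension --- or, more elementarily, the pointwise definition $\rho_u := \pi^{T(k_0)}_u$ for any $k_0$ with $u \in T(k_0)$ --- produces a measure $\rho$ on $T$ satisfying $\rho|_{T(k)} = \pi^{T(k)}$. It then remains to identify $\rho$ with the $\textsf{h}$-invariant measure $\pi$ of $\U$ on $T$ given by \eref{eq:pi}. For this I would check that the determinantal formula \eref{eq:pi} for $\pi_u$ is invariant under the projection of $\U$ onto any subtree $t$ containing $\cro{\root,u}$: the matrix $\uU$ built at $u$ depends only on sums of the form $\U_{a,T_b}$ with $a,b \in \cro{\root,u}$, and a short telescoping argument based on \eref{eq:proj} gives $\U^{t}_{a, T_b \cap t} = \U_{a, T_b}$ whenever $b \in t$. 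Hence $\pi^{T(k)}_u = \pi_u$ for every $u \in T(k)$, which forces $\rho = \pi$.

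The only non-mechanical step is this last projection-invariance of \eref{eq:pi}, but it is precisely the level-truncation idea already used at the end of the proof of \Cref{theo:1.3}, so I would simply reuse that framework rather than redo the bookkeeping from scratch.
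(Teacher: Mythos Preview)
Your proof is correct and follows exactly the paper's approach: the paper's own argument is the single sentence preceding the statement (``A simple iteration of \Cref{pro:dadgty}, together with Kolmogorov extension theorem give immediately''), and you have correctly unpacked it. The projection-invariance check identifying $\rho$ with $\pi$ is the only detail the paper leaves implicit, and your telescoping argument $\U^{t}_{a,(T_b\cap t)}=\U_{a,T_b}$ is exactly the truncation device already used at the end of the proof of \Cref{theo:1.3}.
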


\begin{rem}\label{rem_lat}
 Formula \eref{eq:fgeht} provides a way to compute the ${\sf h}$-invariant distribution of a finite tree quite efficiently starting from an initial tree reduced to its root, and by the successive addition of edges (see , Examples \ref{ex:2},\ref{exa:FC},\ref{Ex_line},\ref{exa:d-ary}).  The computation of $\pi_u$ requires $|u|$ multiplications, the sum of $|u|$ terms, and one division, so that, on a tree with $n$ nodes, for $H:=\sum_{u\in T} |u|$, $H$ sums, $H$ multiplications and $n$ divisions; since in all trees $H\leq n^2/2$, this is generally less expensive than the formula of Theorem \ref{theo:1.3}.
    \end{rem}

  \begin{cor}\label{cor:las} Take two (finite or infinite) trees $T^{(1)}$ and $T^{(2)}$, each of them being respectively equipped with an irreducible \AUD transition matrix $\U_{(1)}$ and $\U_{(2)}$. Denote by $\pi^{(1)}$ and $\pi^{(2)}$ their respective $h$-invariant measures. Assume that both $T^{(1)}$ and $T^{(2)}$ contain the same node $u$, so that both trees contain the tree $\tau$ reduced to $\cro{\root,u}$.\\
    If the two projected transition matrices $\U_{(1)}^{\tau}$ and  $\U_{(2)}^{\tau}$ are equal, then  $\pi^{(1)}(u) = \pi^{(2)}(u)$.  
\end{cor}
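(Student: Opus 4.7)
The plan is to apply the leaf addition strategy of \Cref{theo:lgp} to both trees, using enumerations in which the ancestral line $\tau = \cro{\root,u}$ is exhausted first. Concretely, for each $i\in\{1,2\}$ I would construct a sequence of finite subtrees
\[
\{\root\}=T^{(i)}(0)\subset T^{(i)}(1)\subset\cdots\subset T^{(i)}
\]
obtained by adding leaves one at a time, with the specific requirement that the first $|u|+1$ nodes added are precisely the vertices of $\tau$, in order of increasing height, so that $T^{(i)}(|u|)=\tau$. Afterwards any breadth-first continuation fills up the remainder of $T^{(i)}$. This is possible because $\tau$ is a finite subtree containing the root, and any such subtree can be grown one leaf at a time.

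The key observation is that at step $k=|u|$, the transition matrix produced by the leaf-addition procedure on $T^{(i)}$ is exactly the projected transition matrix on $T^{(i)}(|u|)=\tau$, namely $\U_{(i)}^{\tau}$ (this is the content of \eqref{eq:proj2} specialised to $T^{(i)}(k)=\tau$). By hypothesis $\U_{(1)}^{\tau}=\U_{(2)}^{\tau}$, so the two finite systems $(\tau,\U_{(i)}^{\tau})$ are literally identical. Since the $h$-invariant measure on a finite tree is a function of the transition matrix alone (indeed, by \Cref{theo:1.3} it is the unique invariant measure up to the normalisation $\pi(\root)=1$), one concludes
\[
\pi^{T^{(1)}(|u|)}(u) \;=\; \pi^{T^{(2)}(|u|)}(u).
\]

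To finish, I would invoke \Cref{theo:lgp}(ii): the full $h$-invariant measure $\pi^{(i)}$ on $T^{(i)}$ restricts to $\pi^{T^{(i)}(k)}$ on each finite subtree $T^{(i)}(k)$. Specialising at $k=|u|$ gives $\pi^{(i)}(u)=\pi^{T^{(i)}(|u|)}(u)$ for each $i$, and combining with the previous equality yields $\pi^{(1)}(u)=\pi^{(2)}(u)$.

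There is no genuine obstacle in this plan; the whole content of the corollary is already packaged into the consistency statement of \Cref{theo:lgp}(ii) together with the structural fact that the projected matrix on $\tau$ only sees what $\U_{(i)}$ does ``inside and out of $\tau$'' in an aggregated way. The only point deserving a line of verification is that such an enumeration exists for the infinite trees $T^{(i)}$: this is routine, since one may first enumerate the $|u|+1$ nodes of $\tau$ in the prescribed order and then extend the enumeration arbitrarily (e.g.\ by breadth-first order) to exhaust $T^{(i)}$.
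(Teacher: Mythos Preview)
Your proposal is correct and follows essentially the same approach as the paper: grow $\tau$ one leaf at a time along the ancestral line $\cro{\root,u}$, observe that the projected transition matrices at stage $|u|$ coincide by hypothesis, and then invoke the consistency statement \Cref{theo:lgp}(ii). The paper's own proof is a one-line reference to the leaf addition strategy with exactly this growth; you have simply spelled out the details.
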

This is a direct consequence of the leaf addition strategy, since one can grow $\tau$ within $T^{(1)}$ and $T^{(2)}$ by successively adding the same temporary ``leaf'' between the ancestors of $u$ at levels $h$ and $h+1$, for $h$ going from 0 to $|u|-1$.

\color{black}
 
\subsubsection{Left Eigenspace of an \AUD transition matrix, for $\lambda=1$} \label{sec:LECTM}

We will mainly explain here how to compute, or characterize, other left eigenvector(s) of $\U$ associated with eigenvalue 1; the extraction of invariant measures can be seen as the filtering out those with positive coordinates (which is difficult in practice, when possible). 

If $\U$ is an irreducible transition matrix on a finite tree, by the Perron-Frobeniüs theorem, the left eigenspace associated to the eigenvalue 1 has dimension 1, and by a consequence of \Cref{theo:1.3}, it is  generated by  the \textsf{h}-invariant measure $\pi$.
When $T$ is infinite, the picture is completely different, since this left eigenspace may be high dimensional, and even the cone of positive invariant measures can generate a space of infinite dimension (an example is given in \eref{eq:qsgg}). 
Take a tree $T$ with at least one end, and consider $P(T)$ the subtree  \footnote{Technically $P(T)$ is rather a subset of $T$, since it does not satisfy the $(iii)$ of the definition of trees given at the beginning of Section \ref{sec:TF}. It is a tree up to a relabeling of the nodes.} of $T$, made up of the union of the infinite injective paths starting at $\root$ in $T$. Since $P(T)$ has no leaf, each node $u$ in $P(T)$ has at least one child:  $\l|c_{P(T)}(u)\r|\geq 1$.
Set
  \ben\label{eq:QT} Q(T):= 1+\sum_{u \in P(T)} \l(|c_{P(T)}(u)|-1\r).\een

\begin{pro}\label{pro:qff}The 1-left eigenspace of an irreducible \AUD transition matrix $\U$ has dimension $Q(T)$. Hence, the vector space generated by the set of invariant measures, has dimension $d$ with $1\leq d \leq Q(T)$.
\end{pro}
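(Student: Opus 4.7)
The plan is to parameterize the $1$-left eigenspace of $\U$ by $Q(T)$ free scalars, corresponding to the branching structure of the skeleton $P(T)=\{u\in T:T_u\text{ is infinite}\}$. The starting observation is that for each $c\notin P(T)$, the subtree $T_c$ is finite by K\"onig's lemma, since $T$ is locally finite.

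First I would prove a reduction lemma: for any $c\notin P(T)$, the invariance equations at the nodes of $T_c$ uniquely determine $\rho|_{T_c}$ from $\rho|_{\cro{\root,p(c)}}$. These equations assemble into a square linear system $(\Id-\U|_{T_c})\,\rho|_{T_c}=b$ with $b$ linear in $\rho|_{\cro{\root,p(c)}}$, and I would check that $\Id-\U|_{T_c}$ is invertible. If $\psi$ were a left $1$-eigenvector of $\U|_{T_c}$, then summing the triangle inequality $|\psi_w|\le \sum_v |\psi_v|\U_{v,w}$ over $w\in T_c$ would give
\[
\sum_{w\in T_c}|\psi_w|\ \le\ \sum_{v\in T_c}|\psi_v|\sum_{w\in T_c}\U_{v,w}\ =\ \sum_{v\in T_c}|\psi_v|\ -\ |\psi_c|\,\U_{c,p(c)},
\]
using that for $v\neq c$ in $T_c$ all \AUD{} transitions stay inside $T_c$ (either to $p(v)\in T_c$ or to descendants in $T_v\subseteq T_c$), while at $v=c$ the transition to $p(c)$ leaks mass out with $\U_{c,p(c)}>0$ by irreducibility. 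This forces $\psi_c=0$; since $\psi|_{T_{c'}}$ is then a $1$-eigenvector of $\U|_{T_{c'}}$ for each child $c'$ of $c$ (once $\psi_c=0$, sibling subtrees do not communicate in the \AUD{} setting), an induction over descending subtrees yields $\psi\equiv 0$.

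Next I would label the $P(T)$-children of each $u\in P(T)$ as $c_1^u,\ldots,c_{k_u}^u$ with $k_u=|c_{P(T)}(u)|\ge 1$, and introduce the parameter set consisting of $a_{\root}$ together with $a_{u,i}$ for $u\in P(T)$ and $1\le i\le k_u-1$; by construction its cardinality is exactly $Q(T)$. Given a parameter tuple, I would build $\rho$ by BFS induction on $P(T)$: set $\rho_{\root}=a_{\root}$; at each $u\in P(T)$, having determined $\rho|_{\cro{\root,u}}$, I extend $\rho$ to every finite subtree $T_c$ hanging off $u$ via the reduction lemma, I set $\rho_{c_i^u}=a_{u,i}$ for $i<k_u$, and I finally solve the invariance equation at $u$ for the remaining value $\rho_{c_{k_u}^u}$, which is possible since $\U_{c_{k_u}^u,u}>0$ by irreducibility. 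All invariance equations on $T$ then hold by construction, so the resulting $\rho$ is a $1$-left eigenvector.

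The map $\Phi$ from the $Q(T)$-parameter space to the $1$-left eigenspace is linear, and it is injective because the parameters are literal values of $\rho$ at specified nodes. It is also surjective: for any $1$-left eigenvector $\rho$, defining $a_{\root}:=\rho_{\root}$ and $a_{u,i}:=\rho_{c_i^u}$ and running the construction recovers $\rho$ exactly, by the uniqueness clauses at each step. Hence $\dim=Q(T)$, and the bound $1\le d\le Q(T)$ for the dimension of the cone of invariant measures is immediate, since invariant measures lie in the $1$-left eigenspace while the \textsf{h}-invariant measure $\pi$ of \Cref{theo:1.3} provides at least one. The main technical hurdle is the reduction lemma: combining the \AUD{} structure, the strict leak at $c$, and the recursive descent into sibling subtrees must be handled with care, whereas everything else is an orderly unwinding of the invariance equations.
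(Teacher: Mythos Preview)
Your proof is correct and follows essentially the same architecture as the paper's: reduce finite subtrees hanging off the skeleton $P(T)$ via a unique-extension lemma, then count the free parameters at branching points of $P(T)$ to get exactly $Q(T)$. The only notable variation is your proof that $\Id-\U|_{T_c}$ is invertible: the paper invokes that $\U|_{T_c}$ is a strictly substochastic (defective) matrix with spectral radius $<1$, whereas you give a more elementary mass-leak argument forcing $\psi_c=0$ and then recurse into the sibling subtrees---this is a nice self-contained alternative that avoids Perron--Frobenius considerations.
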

An example of transition matrix with infinite dimensional 1-left eigenspace is presented in Example \ref{sec:qfgrht}.
\begin{proof} 
 A  row vector $L=\bma L_{u}, u \in T\ema$ is a  1-left eigenvector of $\U$ iff it solves the infinite system of equations:
 \ben\label{eq:LeftE} L_{u}(1-\U_{u,u})= {\sum_{v \in {c_T(u)}} L_v\U_{v,u} + \sum_{v\in\cro{\root,p(u)}} L_v \U_{v,u}}, ~~~ u \in T.
 \een
 One can try to solve this system progressively: typically, one would force a value $L_\root$ at the root (only $L_\root=1$ and $L_\root=0$ need to be considered), and from there, one could search for some values $L_v$ for all the other nodes, so that the complete system is satisfied.

Assume that for a subset of nodes $A$ of $T$, some values $(L_u,u \in A)$ have been fixed: we say that $(L_u,u \in A)$ is a pre-solution, if for all $u$ such that  $\cro{\root,u}\cup\l\{ ui, i\leq |C_{P(T)}(u)|\r\}$ is contained in $A$, then \eref{eq:LeftE} is satisfied: in other words, we only require that the equations \eref{eq:LeftE} involving only elements of $A$ are satisfied. A pre-solution may or may not be extendable to the complete tree; however, any restrictions of a complete solution of $T$ on a subset $A$, is a pre-solution.

Assume that we have in hand a pre-solution $(L_u,u\in A)$ for a fixed subset $A$ of $T$, path connected to $\root$. If $A$ has a node $w$ such that one of its children $v$ is not in $A$, and such that $T_v$ is finite, we claim that there exists a unique way to prolong the pre-solution of $A$ to $A\cup T_v$ (such that this prolonged sequence is a pre-solution on $A\cup T_v$)~:
 this is a consequence of the equation
 \[L_{\cro{\root,p(v)}}M_{\cro{\root,p(v)},T_v}+L_{T_v}M_{T_v,T_v}=L_{T_v}\]
 where $L_{\cro{\root,p(v)}}$ denote the restriction of $L$ to the sequence ${\cro{\root,p(v)}}$, and more generally, for two subsets $A$ and $B$ of indices $M_{A,B}$ is the matrices obtained by conserving the rows of $M$ indexed by the elements of $A$ and the columns indexed by the elements of $B$. 
Since $M_{T_v,T_v}$ does not have eigenvalue 1 (since $M_{T_v,T_v}$ is a defective  transition matrix, its largest eigenvalue in modulus is less than 1),  $M_{T_v,T_v}-\Id_{T_v}$ is invertible, so that the desired extension is given by $L_{T_v}=L_{\cro{\root,p(v)}}M_{\cro{\root,p(v)},T_v}(\Id-M_{T_v,T_v})^{-1}$.
  
Since the 1-left eigenspace is generated by the pre-solution that can be extended to the tree $T$, we may from now on, remove all the finite subtrees hanging from the injective infinite paths starting at $\root$, and prove the proposition for trees without leaves.

Observe that if one takes a tree $T$ reduced to a single path,
then any $\U$ on $T$ possesses a unique one-dimensional 1-left eigenspace, since fixing $L_\root=1$, can be extended in a unique way by \eref{eq:LeftE} (and $L_\root=0$, by extension, leads to the zero eigenvector).

Now, start again to construct a pre-solution, by starting on $A=\{\root\}$, and by fixing $L_{\root}=1$ ``the principal pre-solution'', and the alternative pre-solution by setting $L_{\root}=0$ if $\root$ has degree at least 2.

Consider a pre-solution on $A$, and let us take a node $u\in A$ having all its children out of $A$.
We may add all these children into $A$, if one chooses some values $(L_{ui}, 1\leq i \leq |c_T(u)|)$ satisfying \eref{eq:LeftE}: this is just an equation for $|c_T(u)|$ variables, and this gives $|c_T(u)|-1$ parameters. Since all pre-solution can be extended on a infinite tree with no leaves, the results follow.
\end{proof}

\begin{rem}It is convenient to see the appearance of the multiplicity of invariant measures as an emergent effect allowed by ends. When one writes the invariance equations solved by the measure on a finite tree, fixing $\rho_\root=1$, and tries to compute successively the values of $(\rho_{ui},1\leq i \leq c_u(T))$ using $\rho_{u}$ (the weight of the children using that of their parents), it seems that there are a lot of degrees of freedom, since, as in the proof, a linear combination of their sum $\sum_i \rho_{ui}\U_{ui,u}+\sum_{w\in\cro{\root,u}} \rho_w\U_{w,u}=\rho_u$ contributes to $\rho_u$. However, this is just an illusion, because, the Perron-Frobeniüs theorem tells us that there is a single solution (and this is also the technical argument developed above that there is a single way to extend a pre-solution to a finite subtree). In infinite trees, the picture is different, because the ends represent somehow missing leaves, and paths along which one can always solve the balance equation concerning $\rho_u$ by taking a convenient value of $\rho_{ui}$, for $ui$ at the end. This fact will be discussed again in Section \ref{sec:noleaves}. 
\end{rem}

\subsection{The \textsf{h}-measure $\pi$ and a sequence of \textsf{h}-transform Markov chains}
\label{seq:h-t}
The \textsf{h}-measure $\pi=\pi^T$ of an \AUD irreducible transition matrix $\U$ on an infinite tree $T$ has a striking property: by the leaf addition strategy, if $t$ is a finite subtree of $T$ (containing the root), then, the restriction of $\pi^T$ to $t$ corresponds to $\pi^t$ when $t$ is equipped with the projected transition matrix $\U^t$ (defined in \eref{eq:proj}).
  Since $t$ is finite,  $\U^t$ is positive recurrent. Hence, by taking a sequence of growing finite trees $t(n)$ converging to $T$ (in the sense that $\cup t(n)=T$), one sees that $\pi^T$ has the property of coinciding, as far as we wish from $\root$ with the invariant measure of a recurrent Markov chain even when $\U$ is not at all recurrent.

The same phenomenon can be observed, for example, on the symmetric random walk on $\Z$, for which $\pi=(1, i\in \Z)$ is invariant, and this measure is the limiting measure (for the local topology) of the invariant measure $[1/2,1,\cdots,1,1/2]$ of a simple random walk  on the segment $[-n,n]$, reflected at the boundary.\par

Intuitively, the \textsf{h}-measure $\pi^T$ can be seen as the invariant measure of $\U$ ``forced to be recurrent''. The Doob \textsf{h}-transform helps to give a more formal sense to this statement.
Consider again a finite subtree $t$ of $T$. The Doob \textsf{h}-transform helps to condition the Markov chain under the event $\tau_{t}<\infty$ (``to hit $t$ a.s.''). Introduce
\[h_{t}(u) = P_u(\tau_{t}<\infty),~~~\forall u\in T;\]
hence, $h_t(u)=1$ when $u$ is in $t$ and, $h_t(u)>0$ in all generality.
Now, define the transition matrix $\tilde{\U} = \tilde{\U}(t)$ by
\ben
\tilde{\U}_{i,j}(t) & = & \U_{i,j}~~~\textrm{if } i\in  t,\\
\tilde{\U}_{i,j}(t) & = & \l(h_{t}(j)/h_{t}(i)\r)\U_{i,j}~~~\textrm{if } i\not\in  t,
\een
which is the transition matrix of a $\U$-Markov chain, conditioned to come back to $t$.\\ 
$\bullet$  Thanks to the leaf addition strategy (or Corollary \ref{cor:las}), the invariant measure $\tilde{\U}(t)$ must coincide with $\pi^T$  on ``the interior $t^\circ$ of $t$'' (which is the set of nodes having their ancestors and children in $t$)\footnote{It is not true in generally, since the  $1$-left eigenvector $L^{(t)}$ of $\tilde{\U}(t)$ satisfies:
\[L_b^{(t)} = \sum_{a \in T} L_a^{(t)} \U_{a,b}   =   \sum_{a\in t} L^{(t)} \U_{a,b} + \sum_{a\notin t} L^{(t)} U_{a,b} =  \sum_{a\in t} L_a^{(t)} \tilde{\U}_{a,b} + \sum_{a\notin t} L_a^{(t)} \frac{h_t(a)}{h_t(b)} \tilde{\U}_{a,b}\]
and the simplification  $L_b^{(t)} = \sum_{a\in t} L_a^{(t)} \tilde{U}_{a,b}$ is only valid when  $(\cro{\root,b} \cup c_T(b))\subset t$} . But since $\tilde{\U}(t)$ is recurrent, $\tilde{\U}(t)$ has a single invariant measure $\pi_{(t)}$, and this measure must coincide with $\pi^h(\U)$ on $t^{\circ}$.

Hence, $\pi^T$ can be seen as a local limit of these invariant measures $\pi_{(t_n)}$, corresponding to positive recurrent matrices on spaces $t_n$ locally converging to $T$.

\subsubsection{An example of tree with one end having an eigenvalue $\neq 1$ with infinite multiplicity}
\label{eq:qsgg}
 
Consider the tree $\tau$ with only 4 vertices  $\root,0,1,2$, and the following transition matrix 
\[ \U = \left[ \begin {array}{cccc} {\frac{1}{20}}&{\frac{1}{4}}&{\frac{1}{5}
}&{\frac{1}{2}}\\ \noalign{\medskip}{\frac{1}{3}}&{\frac{2}{3}}&0&0
\\ \noalign{\medskip}{\frac{1}{3}}&0&{\frac{2}{3}}&0
\\ \noalign{\medskip}{\frac{1}{3}}&0&0&{\frac{2}{3}}\end {array}
\right]
\]
where the entries are indexed by $\root,0,1,2$ in this order. 

It can easily be checked that the multiset of eigenvalues of this matrix is $\{1,-17/60,2/3,2/3\}$.
The two left-eigenvectors corresponding to the eigenvalue $2/3$ are $(0,-1,1,0)$ and $(0,-1,0,1)$; the main eigenvector (invariant distribution), corresponding to the eigenvalue $1$, is $\bma 20 & 15 & 12 & 30 \ema/77$ and the eigenvector associated with $-17/60$ is $\bma -19 & 5 & 4 & 10\ema$. So the two eigenvectors $v_1=(0,-1,1,0)$ and $v_2=(0,-1,0,1)$ generates the $2/3$-left eigenspace. 

\paragraph{Key observation:}The coordinates of $v_1$ and $v_2$ corresponding to the root are zero: in the ``balance equation'' $(2/3)\rho = \rho\U$, ``nothing is sent from the root'', and ``what arrives at the root cancel out''.   
\begin{figure}[htbp]
	\centerline{\includegraphics[width = 12cm]{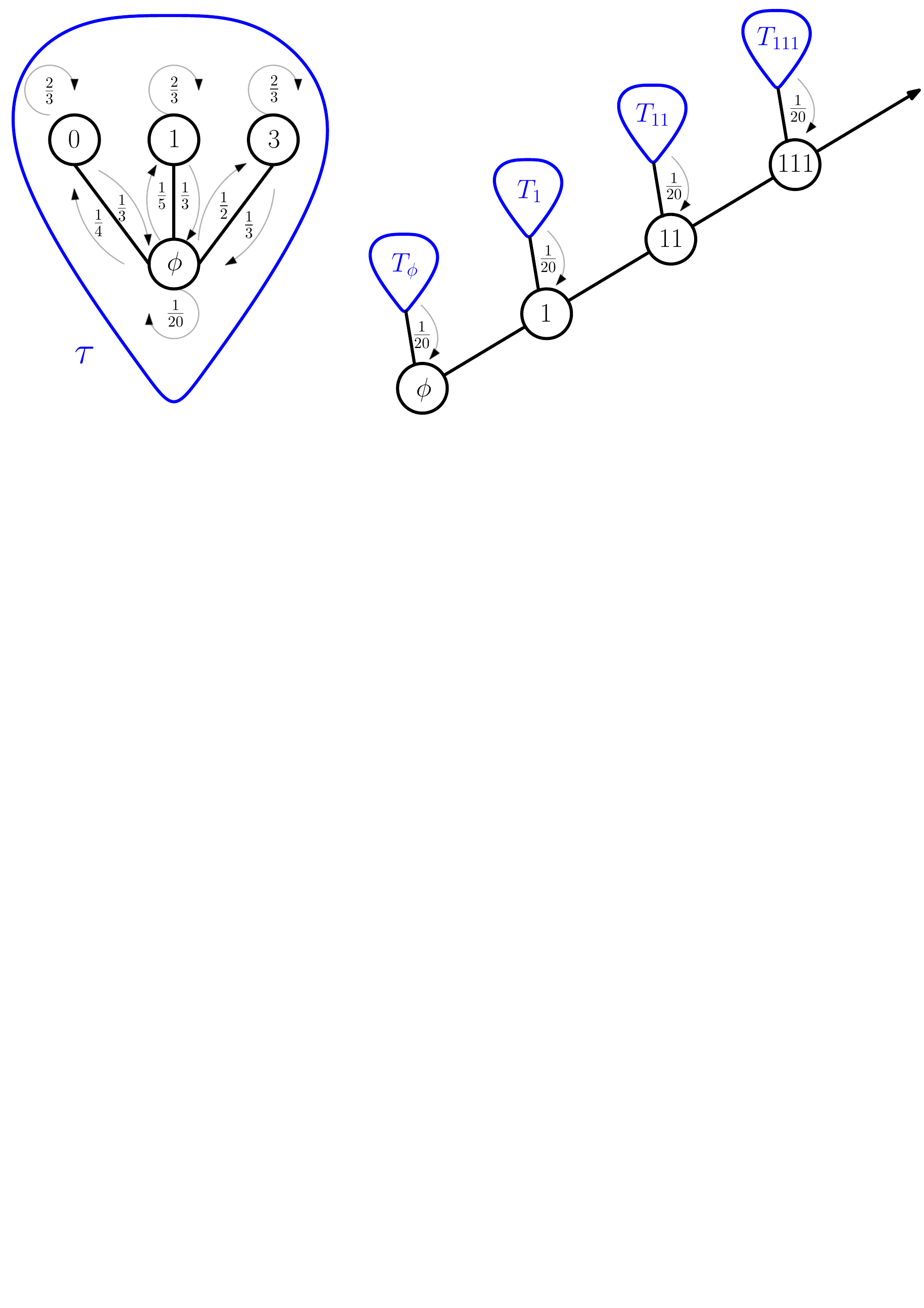}}
	\captionn{	\label{fig:ex}To the right, the infinite tree $T$ which is constructed by grafting copies $(T_u)$ of the tree $\tau$ (to the left of the image) at each vertex of a copy of $\N$.}

    	\end{figure}
    	
Now, let us construct an infinite tree (see Figure \ref{fig:ex}), made of a single line (isomorphic to $\mathbb{N}$) from which hangs, at each node, a copy of $\tau$, as follows.
Consider the infinite tree $T$ composed by:\\
-- the set of nodes on the infinite line $\{1^j,j\geq 0\}$,\\
-- a tree which is a copy of $\tau$ grafted at each node $u$ of type $1^j0$ being $T_u=u \tau$ (in other words, $1^j0$, $1^j00$, $1^j01$, $1^j02$).

Now, let us define a transition matrix $\bar\U$ on $T$:\\
-- for any node $1^j$ in the new infinite branch $\{1^j, j\geq 0\}$, we set  $\bar{\U}_{1^j,.}$ to be the uniform law on the set of neighbors of its neighbors (the neighbors are $1^{j+1}, 1^j0$ and $1^{j-1}$ when $j>0$),\\
-- from the roots of the copies of $\tau$, \\
  $\bullet$  we keep the transitions to the children $\bar{U}_{1^j0, 1^j0k}=U_{\root, k}$, that is $\bar{\U}_{1^j0,1^j00}=1/4$,  $\bar{\U}_{1^j0,1^j01}=1/5$, $\bar{\U}_{1^j0,1^j02}=1/2$, \\
  $\bullet$ but replace the weight $\U_{\root,\root}$ of the loop $\root\to \root$ by a transition to the neighbor on the infinite branch: set $\bar{\U}_{1^j0,1^j0}=0$, $\bar{\U}_{1^j0,1^j}=1/20$.
 
Now, it is simple to check (using the key observation above) that the left eigenspace associated with the eigenvalue $2/3$ is infinite dimensional, since it contains all the vectors $V^{i,1}$ and $V^{i,2}$ (that are clearly linearly independent) where:\\
  -- the row vector  $V^{i,1}=(V^{i,1}_u,u\in T)$ has all its coordinate zero, except
  \[\l( V^{i,1}_{1^i00}, V^{i,1}_{1^i01},  V^{i,1}_{1^i02}\r)=(-1,1,0)\]
  and similarly, for $V^{i,2}$ constructed with the second finite 2/3 left-eigenvector, 
   \[\l( V^{i,2}_{1^i00}, V^{i,2}_{1^i01},  V^{i,2}_{1^i02}\r)=(-1,0,1).\]  

\subsubsection{Combinatorial representation of $\lambda$-left eigenvectors on finite trees}

In the standard computation of the unique invariant probability measure $\rho$ of an irreducible Markov chain with  transition matrix $M$ on a finite state space $E=\{1,\cdots,n\}$, instead of solving
$\mu M=\mu$, one defines $\widetilde{M}$ as the matrix obtained from $M$ by replacing   its last column by $\bma 1 & \cdots 1\ema^t$; one solves instead the morally equivalent system  $ \rho \widetilde{M}= \bma \rho_1 & \cdots &\rho_{n-1}&1\ema$,
or equivalently
\ben\label{eq:solpf} \rho (\widetilde{\Id}-\widetilde{M}) = \bma 0& \cdots &0&-1\ema \een
where $\widetilde{\Id}$ is obtained by replacing the last diagonal coefficient of the identity matrix by zero.
The word ``morally'' is here because \eref{eq:solpf} is an affine system admitting a probability distribution as solution, which generates the one-dimensional vector space, solution of the linear system $\mu M=\mu$.
The non-singular system \eref{eq:solpf} can thereafter be solved by Kramer formula~:
\[ \rho = {\sf Cste}\bma \det( (\widetilde{\Id}-\widetilde{M})_{*i}), 1 \leq i \leq n\ema,\]
where  $(\widetilde{\Id}-\widetilde{M})_{*i}$ is obtained by replacing the $i$th row of $\widetilde{\Id}-\widetilde{M}$ by $\bma 0& \cdots &0&-1\ema$. Up to the sign, it amounts to computing the determinant of the matrix obtained by the suppression  of the $i$th row and last column of $(\widetilde{\Id}-\widetilde{M})_{*i}$: then one gets the same results as if, instead, the $i$th row and last column of ``the initial matrix'' ${\Id} - {M}$ were removed, that is 
\[ \rho = {\sf Cst}.\det((-1)^{n+i} ({\Id} - {M})^{(i,n)}),  \]
where $(i,n)$ is here to denote the suppression of row $i$ and column $n$. But in ${\Id} - {M}$, the sum of the columns is equal to the zero vector: hence, in the determinant computation, the $i$th column can be replaced by the opposite of the sum of the others; by matrix manipulation (adding columns already present to column $i$ does not change the determinant) we get $\rho= {\sf Cst}. \det(( {\Id} - {M})^{(i,i)})  $. After that the matrix tree theorem, provides an expansion in terms of weighted spanning trees (see e.g. Zeilberger \cite{DZ} and references therein, and for a probabilistic point of views, Aldous \cite{Al90}, Broder \cite{Bro89},  Hu, Lyons \& Tang \cite{MR4260489} and, by the authors \cite{LFJFM}).
 
When $\lambda\neq 1$, as explained above, the dimension of the $\lambda$-left eigenspace may have dimension larger than one, so that it cannot be simple, even if, as taught in linear algebra classes, the left-kernel of $\lambda \Id-\U$ can be computed, for example, by performing a triangulation; here we would like to give a formula, having a close algebraic form, valid when $\lambda$ has multiplicity 1.
 
 The above approach cannot be applied directly, because $\lambda{\Id} - {M}$ is no longer a Laplacian matrix: the sum of its columns is never, for $\lambda\neq 1$, the null vector.
 However, we know that $\{\mu~: \mu(\lambda{\Id} - {M})  = 0\}$ is a one dimensional vector space. Consider the matrix $\overline{\lambda{\Id} - {M}}$ obtained by replacing the last column of $\lambda{\Id} - {M}$, by the opposite of the sum of the others (so that now, this matrix is a Laplacian matrix).
The last column of $\overline{\lambda{\Id} - {M}}$ is then \[{\sf LC}:=\l( -\sum_{j=1}^{n-1} \lambda \1_{i=j} - \l(M_{i,j}\r)\r) _{i}.\]

Instead of solving $\mu(\lambda\Id-M)  = 0$, we will solve  the equivalent problem $\nu\l(\overline{\lambda\Id-M}\r) =\bma 0 & \cdots &0\ema$. The problem is equivalent because, if we take a solution $\nu$ to $\nu M =\lambda\nu$,
  then the coordinates of $\nu$ sums up to zero, and since  $\sum_{i\leq n, j\leq n-1}\lambda \nu_{i}\1_{i=j}=\sum_{j\leq n-1}\nu_j=-\lambda \nu_n$,  and  $\sum_{i\leq n, j\leq n-1}\nu_{i}M_{i,j}=\sum_{i\leq n}\nu_i(1-M_{i,n})=-\sum_{i\leq n}\nu_iM_{i,n}=-\lambda \nu_n$, and then $\nu {\sf LC}=0$, so that $\nu\overline{\lambda\Id-M}=0$ (the reciprocal can be shown in the same way, and $\lambda$ having multiplicity 1 allows to conclude).
  From here, since $\overline{\lambda\Id-M}$ is Laplacian, we can adapt the method explained above of the $\lambda=1$ case, and will replace again the last column of  $\overline{\lambda\Id-M}$. The difficulty here is that we don't know any linear combination of coordinates of $\nu$ that is non-zero: it is possible to construct trees, for which for an eigenvalue $\lambda\neq 1$, $\nu$ is non-zero on only two coordinates (corresponding to 2 leaves with same parent).\\
  However, we know that at least one coordinate of $\nu$ is non-zero, and it is then possible to add a condition as $\nu_1=1$, and if no solution is found, add $\nu_2=1$ instead,... and so on, and so forth, until a solution is found with this additional condition. Let us assume, for the sequel, that there exists a solution with $\nu_n=1$.
  We therefore search instead of $\nu\overline{\lambda\Id-M}=0$ a solution to $\nu {M^\triangle}=\bma 0&\cdots & 0 & 1\ema$ where ${M^\triangle}$ has its first $n-1$ columns equal to those of $\overline{\lambda\Id-M}$, and the last one is the canonical vector $e_n$. One can solve $\nu {M^\triangle}=\bma 0&\cdots & 0 & 1\ema$ by Kramer's formula, and get
 \be
 \nu &=& {\sf Cst}. \bma \det(M^{\triangle}_{*i}), 1\leq i \leq n \ema
\ee with again $M^{\triangle}_{* i}$ amounts to replacing the $i$th row of $M^{\triangle}$ by $\bma 0& \cdots &0&1\ema$. But again, by expansion of the determinant on the $i$th line, it is visible that this is equal also to  
 \be
 \nu &=& {\sf Cst}. \bma \det(\overbar{\lambda\Id-M})_{*i}, 1\leq i \leq n \ema\\
 &=& {\sf Cst}.\bma \det((-1)^{n+i} (\overbar{\lambda\Id-M})^{(i,n)}) , 1\leq i \leq n \ema
 \ee
 and one concludes again that since $(\overbar{\lambda\Id-M})$ is Laplacian, one can still play with the columns to get:
 \[\nu =  {\sf Cst}.\bma  \det( (\overbar{\lambda\Id-M})^{(i,i)}), 1\leq i \leq n \ema.\]
From here we can make an expansion on the set of spanning trees (on which the edges are weighted by $\tilde{M}$).

\subsection{Eigenspaces of Markov chains on trees with  no leaf}\label{sec:noleaves}

\label{sec:eig}
In this section, we discuss the properties of the spectrum of \AUD transition matrices on infinite trees. The simplest case is that of infinite trees with no leaves (the root is never considered as a leaf). 

On such a tree $T$, each node has some descendants: for each $h\geq 0$, one can construct a choice function $x_{h}:T\to T$ which associates with each node one of its descendant having height at least $h+1$. 
\begin{pro}\label{pro:jptrj}Let $\U$ be a \AUD irreducible transition matrix on a tree $T$ with no leaves. For all $\lambda\in \mathbb{C}$, the vector $\pi$ defined by $\pi^{(\lambda)}(\root)=1$ and
  \[\pi^{(\lambda)}(u):=\pi^{(\lambda)}(\root) \frac{\det\l((\lambda \Id-{}^u\U)^{(u)}\r)}{\dis\prod_{v \in \cror{  \root,  u}} \U_{v,p(v)}},~~\textrm{ for } u\in T\] is a $\lambda$-left eigenvector of $\U$; hence the spectrum of $\U$ is $\C$.  
\end{pro}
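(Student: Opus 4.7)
My approach mirrors the proof of \Cref{theo:1.3}, extended to arbitrary $\lambda$. The case $\lambda=1$ is \Cref{theo:1.3} itself, so assume $\lambda\neq 1$. The goal is to verify, for every $u\in T$, the $\lambda$-left eigenvalue equation
\[
\lambda\,\pi^{(\lambda)}(u) \;=\; \sum_{v \in \cro{\root, u}} \pi^{(\lambda)}(v)\,\U_{v,u} \;+\; \sum_{c \in c_T(u)} \pi^{(\lambda)}(c)\,\U_{c,u}.
\]
Since $T$ is locally finite and each $\pi^{(\lambda)}(w)$ is determined solely by data of $\U$ along $\cro{\root, w}$ (through the matrix ${}^w\U$ and the total masses $\U_{a,T_b}$ for $a\in\cro{\root,p(w)}$ and $b$ on the path from $a$ to $w$), each such equation is a finite algebraic identity.

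I would follow the truncation strategy from the proof of \Cref{theo:1.3}. Fix $u$, pick $h\geq |u|+2$, and form the finite truncation $T^{(h)}=\{w\in T:|w|\leq h\}$ together with a stochastic \AUD matrix $\U^{(h)}$: for rows indexed by vertices of depth $<h$, collapse all transitions into cut subtrees onto their level-$h$ roots; for rows indexed by vertices of depth $h$, pick any stochastic \AUD completion. The absence of leaves in $T$ is crucial here, since every level-$h$ vertex of $T^{(h)}$ then corresponds to a genuine interior vertex of $T$ with further descendants, so the completion is a free choice and in particular can be tuned so that $\lambda$ becomes a simple eigenvalue of $\U^{(h)}$. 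By construction, the local matrix ${}^w(\U^{(h)})$ coincides with ${}^w\U$ whenever $|w|\leq h-1$, because both depend only on the total masses $\U_{a,T_b}$ that the truncation preserves; hence $\pi^{(\lambda)}(w)$ computed from either matrix agrees.

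On $T^{(h)}$ with $\lambda$ a simple eigenvalue, the $\lambda$-left eigenspace is one-dimensional. Normalizing the (up to scalar) unique $\lambda$-left eigenvector $\rho$ of $\U^{(h)}$ by $\rho(\root)=1$, a direct algebraic comparison shows $\rho(w)=\pi^{(\lambda)}(w)$ for every $w\in T^{(h-1)}$. This comparison can be performed either by iterated cofactor expansion of $(\lambda\Id-{}^w(\U^{(h)}))^{(w)}$ along its last row---relating the determinant at $w$ to those at its ancestors and at its siblings' subtrees---or by applying the $\lambda$-deformed Laplacian formula of the preceding subsection on combinatorial representations of $\lambda$-left eigenvectors on finite trees, combined with the projection consistency behind \Cref{cor:las}. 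Once $\rho=\pi^{(\lambda)}$ on $T^{(h-1)}$, the eigenvalue equation at $u$ for $\rho$ on $T^{(h)}$ involves only values of $\rho$ at depths $\leq h-1$ and only entries of $\U^{(h)}$ coinciding with those of $\U$; hence it descends to the same identity for $\pi^{(\lambda)}$ on $T$. As $u$ was arbitrary and $\pi^{(\lambda)}(\root)=1$, we obtain $\pi^{(\lambda)}\U=\lambda\pi^{(\lambda)}$ with $\pi^{(\lambda)}\neq 0$, so $\lambda$ belongs to the spectrum of $\U$; since $\lambda$ was arbitrary, $\mathrm{Spec}(\U)=\mathbb{C}$.

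The delicate step is the explicit identification of $\rho$ with $\pi^{(\lambda)}$ on the interior of the truncation. For $\lambda=1$ this is delivered cleanly by the matrix-tree theorem (as in \Cref{theo:1.3}) together with the leaf-addition principle of \Cref{cor:las}; for general $\lambda$ the corresponding combinatorial identity is not classical and must be established by a cofactor/row-operation argument on the matrices $(\lambda\Id-{}^w\U)^{(w)}$ linking $w=u$ with the finitely many $w=c\in c_T(u)$ and $w\in\cro{\root,u}$. The no-leaves hypothesis is precisely what enables the truncation boundary to carry the extra parameter realizing $\lambda$ as a simple eigenvalue of $\U^{(h)}$; the argument has no chance on a tree with leaves, which is consistent with the statement being restricted to leafless trees.
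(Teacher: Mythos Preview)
Your strategy has a genuine obstruction at the step where you ``tune'' the boundary rows of the finite stochastic matrix $\U^{(h)}$ so that an arbitrary $\lambda\in\C$ becomes a (simple) eigenvalue. A finite stochastic matrix has spectral radius $1$, so no $\lambda$ with $|\lambda|>1$ can ever be an eigenvalue of $\U^{(h)}$, regardless of how the level-$h$ rows are completed. Thus the very object $\rho$ you want to compare with $\pi^{(\lambda)}$ does not exist for such $\lambda$. Even for $|\lambda|\leq 1$, the ``free choice'' at a leaf $u$ of $T^{(h)}$ amounts only to splitting mass between $\U^{(h)}_{u,u}$ and $\U^{(h)}_{u,p(u)}$, which is far too little freedom to force a prescribed $\lambda$ to appear, let alone simply. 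Finally, the ``delicate step'' you flag---identifying the finite eigenvector with the determinantal formula for $\lambda\neq 1$---is not actually carried out; you correctly note that the matrix-tree argument behind \Cref{theo:1.3} does not extend, and what you sketch in its place is a promise rather than a proof.

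The paper sidesteps both difficulties by a different device. It first restricts to real $\lambda\geq 1$ and, instead of truncating, \emph{modifies} $\U$ itself: using a choice function $x_h$ sending each node to a descendant beyond level $h$ (this is where leaflessness is used), it sets $\U^{h,\lambda}_{u,v}=(\U_{u,v}+(\lambda-1)\1_{v=x_h(u)})/\lambda$, which is again a genuine irreducible \AUD stochastic matrix on $T$. The extra mass is thrown so far down that, for any fixed $u$ with $|u|<h$, the branch projection satisfies ${}^u\U^{h,\lambda}={}^u\U/\lambda$; hence the $h$-invariant measure $\pi^{h,\lambda}$ given by \Cref{theo:1.3} (no new combinatorics needed) equals the claimed $\pi^{(\lambda)}$ at $u$ for all large $h$. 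Passing to the limit in the finitely many terms of the balance equation at $u$ yields $\lambda\pi^{(\lambda)}=\pi^{(\lambda)}\U$ for all $\lambda\geq 1$. Since each coordinate $\pi^{(\lambda)}(u)$ is a polynomial in $\lambda$, the eigenvalue identity at every $u$ extends to all $\lambda\in\C$ by analytic continuation. The two key ideas you are missing are thus: (a) perturb the transition matrix rather than force $\lambda$ into the spectrum of a truncation, and (b) treat a convenient real range of $\lambda$ first and then continue analytically.
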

Again, the proposition does not state the uniqueness of the dimension of the $\lambda$-left eigenspaces (and again, as in \Cref{pro:qff}, the dimension of these eigenspaces is $Q(T)$).
    \begin{proof} 
It suffices to prove the statement for $\lambda\geq 1$: indeed, since for all $u\in T$, $\lambda\mapsto \pi^{\lambda}(u)$ is holomorphic, if the finite sum of holomorphic functions
\[\pi^{(\lambda)}(u)-\sum_{v\in \cro{\root,u}\cup c_u(T)}\lambda \pi^{(\lambda)}(v) \U_{v,u}\]
is null for $\lambda \geq 1$, it must be zero on all $\C$ by an analytic continuation argument.

Assume then that $\lambda\geq 1$, choose some integer $h>0$, and consider the irreducible \AUD transition matrix $\U^{h,\lambda}$ on $T$ defined by
	\[\U^{h,\lambda}_{u,v}=\frac{\U_{u,v}+(\lambda-1)\1_{v=x_h(u)}}\lambda\geq 0,\]
    whose $h$-invariant distribution is
	\ben\label{eq:qdgr} \pi^{h,\lambda}(u)=\pi^{h,\lambda}(\root) \frac{\det\l((\Id-{}^u\U^{h,\lambda})^{(u)}\r)}{\dis\prod_{v \in \cror{  \root,  u}} \U_{v,p(v)}^{h,\lambda} }.\een
	Notice that  $\pi^{h,\lambda}(u)$ is constant for all $h$ such that $h> |u|$, since in words, ${}^{u}\U_{a,b}^{h,\lambda}$ for $|a|,|b|$ and $|u|< h$, is defined using a kind of projection onto $\cro{\root, u}$ (and this projection is the same for all $h$ big enough, if $a,b,u$ are fixed).
		Because of this, for all $u$, the sequence $(\pi^{h,\lambda}(u),h\geq 0)$ is eventually constant, and thus
	\[\pi(u):=\lim_h  \pi^{h,\lambda}(u)= \pi(\root) \frac{\det\l((\Id-{}^u\U/\lambda)^{(u)}\r)}{\dis\prod_{v \in \cror{  \root,  u}} \U_{v,p(v)}/\lambda }=\pi(\root) \frac{\det\l((\lambda \Id-{}^u\U)^{(u)}\r)}{\dis\prod_{v \in \cror{  \root,  u}} \U_{v,p(v)}}.\]
	exists, as well as
	\[\lim_h {}^u\U^{h,\lambda}={}^u\U / \lambda.\] 
	Since   ${}^u\U^{h,\lambda}$ and $\pi^{h,\lambda}(u)$ are eventually constant (as a function of $h$), $\pi^{h,\lambda}$ invariant by  $\U^{h,\lambda}$, for a fixed $u$, and $h\to+\infty$ gives $\pi= (\U/\lambda) \pi$, that is $\pi$ is a $\lambda$-left eigenvector of $\U$ (since $\pi^{h,\lambda}(u)=\sum_{v\in \cro{\root,u}\cup c_u(T)} (\U_{v,u}^{h,\lambda}/\lambda) \pi^{h,\lambda}(v)$ involve a finite number of nodes).
\end{proof}

When a tree is reduced to a single infinite path starting at $\root$, any irreducible \AUD transition matrix admits all complex numbers as  simple left eigenvalues (see \cite[Theo. 3.8]{MR4669754}).

\subsection{More general spectral properties}

\label{sec:MG}

\paragraph{On a finite space $F$,} if $\M$ is an irreducible transition matrix:\\
\bls First, by the Perron-Fröbenius theorem, there exists a single invariant measure that can be normalized to be a probability distribution: the total mass is 1.\\ 
The eigenvalues other than 1 have an absolute value strictly less than 1. If $\rho^{\lambda}$ is a $\lambda$-left eigenvector for some $\lambda\neq 1$,
then $S:=\sum_{i\in F}\rho^{\lambda}_i=0$ (since summing the identity , $\sum_i \rho^{\lambda}_i \M_{i,j} = \lambda \rho_j$ over $j$ gives $S=\lambda S$).\par

This last conclusion holds more generally when $T$ is infinite under additional assumptions (e.g.,  by Fubini's theorem, when $\sum_{i\in F}|\rho^{\lambda}_i|<+\infty$).\par
When the state space is infinite, as discussed in the previous subsection,  complex numbers of any absolute value can be eigenvalues.

In Section \ref{eq:qsgg}, we saw an example of \AUD transition matrix $\U$ on a finite tree having an eigenvalue $\lambda\neq 1$ with multiplicity $2$, and with infinite multiplicity on a infinite tree. 
As we will see now, the presence of leaves is an obstacle for certain complex numbers to be left eigenvalues (so that the ``full left spectrum'' of \Cref{pro:jptrj} does not extend to all trees). \par
Consider $\U$ an irreducible \AUD transition matrix on an infinite tree $T$, and assume that $T$ has some leaves, which implies also that some nodes $v$ of $T$ are roots of finite subtrees $T_v$.
If $\rho^{(\lambda)}$ is a $\lambda$-left eigenvector of $\U$, we have $\lambda\rho^{(\lambda)}=\rho^{(\lambda)} \U$, and this implies that
\[\lambda \rho_{T_v}^{(\lambda)}= \rho_{T_v}^{(\lambda)} \U_{T_v,T_v}+ \rho_{\cro{\root,p(v)}}^{(\lambda)}\U_{\cro{\root,p(v)},T_v}\]
and then
\begin{align}\label{sys:rev}
	\rho_{T_v}^{(\lambda)}(\lambda \Id-\U_{T_v,T_v})= \rho_{\cro{\root,p(v)}}^{(\lambda)}\U_{\cro{\root,p(v)},T_v}.
\end{align}
The matrix $\lambda \Id-\U_{T_v,T_v}$ is invertible iff $\lambda$ is not an eigenvalue of $\U_{T_v,T_v}$. As in the proof of \eref{pro:qff}, this implies that any pre-solution of $\rho^{(\lambda)}$ constructed on the branch below $v$ can be extended to $T_v$, by applying the inverse of the matrix $\lambda \Id-\U_{T_v,T_v}$ to the right in equation \ref{sys:rev}. However, when $\lambda \Id-\U_{T_v,T_v}$ is not invertible, a pre-solution on $T\setminus T_v$ can be extended only if $\rho_{\cro{\root,p(v)}}\U_{\cro{\root,p(v)},T_v}$ belongs to the vector space generated by the rows of $(\lambda \Id-\U_{T_v,T_v})$ (because for any row vector $R$, $R(\lambda \Id-\U_{T_v,T_v})$ lives in the vector space generated by the rows of $(\lambda \Id-\U_{T_v,T_v})$).

Hence, there are two cases: either the pre-solution cannot be extended to $T_v$, either the set of extensions form an affine subspace with dimension $\dim\l(\Ker \l(\lambda \Id-\U_{T_v,T_v}\r)\r)$.
As in the proof of  Proposition \eref{pro:qff}, the extension of a pre-solution along an infinite path, or towards a node from which hang several infinite subtrees, is always possible. 
Since an infinite tree $T$ (locally finite) has countably many nodes, we have:
\begin{pro} Let $\U$ be an irreducible \AUD transition matrix on an infinite tree $T$, then the set of complex numbers $\lambda$ such that $\lambda$ is not a left eigenvalue of $\U$, is countable.
\end{pro}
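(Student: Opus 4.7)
The plan is to construct, for every $\lambda$ outside an explicit countable set $\Lambda\subset \C$, a non-zero $\lambda$-left eigenvector of $\U$, using the same \emph{pre-solution extension} strategy that underlies the proofs of \Cref{pro:qff} and \Cref{pro:jptrj}. The set $\Lambda$ will collect all exceptional spectral values coming from the finite subtrees of $T$.

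\textbf{Step 1 (Countable set of forbidden values).} Let $\mathcal{F}$ denote the family of rooted subtrees of $T$ of the form $T_v$ which are finite (i.e., the set of nodes $v\in T$ such that $v$ has only finitely many descendants). Since $T$ itself is countable, $\mathcal{F}$ is at most countable. For each $T_v\in\mathcal{F}$, the block $\U_{T_v,T_v}$ is a finite square matrix, hence has a finite spectrum. Define
\[
\Lambda \;:=\; \bigcup_{T_v\in\mathcal{F}} \mathrm{Spec}\!\left(\U_{T_v,T_v}\right);
\]
as a countable union of finite sets, $\Lambda$ is countable. I will show that $\C\setminus\Lambda$ is contained in the set of left eigenvalues of $\U$, which yields the claim.

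\textbf{Step 2 (Building a non-zero pre-solution on the infinite skeleton).} Fix $\lambda\in\C\setminus\Lambda$. As in the proof of \Cref{pro:qff}, proceed node by node following a breadth-first enumeration of $T$. Assign $L_\root=1$, and at each node $u$ already carrying a value, use the $\lambda$-analogue of the balance equation
\[
\lambda L_u \;=\; \sum_{v\in\cro{\root,p(u)}} L_v\,\U_{v,u} \;+\; L_u\,\U_{u,u} \;+\; \sum_{v\in c_T(u)} L_v\,\U_{v,u}
\]
to constrain the values on the children of $u$. When some child $v\in c_T(u)$ satisfies $T_v$ finite, the block equation (used in the discussion preceding the proposition)
\[
\rho_{T_v}^{(\lambda)}\,(\lambda\Id-\U_{T_v,T_v}) \;=\; \rho_{\cro{\root,p(v)}}^{(\lambda)}\,\U_{\cro{\root,p(v)},T_v}
\]
has a \emph{unique} solution, because $\lambda\notin\mathrm{Spec}(\U_{T_v,T_v})$; hence the values on the entire finite subtree $T_v$ are forced by the values already assigned on $\cro{\root,p(v)}$. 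When $T_v$ is infinite, one proceeds as in \Cref{pro:qff} and \Cref{pro:jptrj}: the extension is always possible (using the at least one degree of freedom provided by each infinite descending branch), and in particular a pre-solution along a single infinite path can always be propagated.

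\textbf{Step 3 (Conclusion).} The construction in Step 2 extends the choice $L_\root=1$ to a family $(L_u)_{u\in T}$ satisfying the full system $L\,\U = \lambda L$ on every node. Since $L_\root=1\neq 0$, the resulting vector $L$ is a non-zero $\lambda$-left eigenvector, so $\lambda$ is a left eigenvalue of $\U$. Because this works for every $\lambda\in\C\setminus\Lambda$ and $\Lambda$ is countable, the set of $\lambda\in\C$ which are \emph{not} left eigenvalues is a subset of $\Lambda$, hence countable.

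\textbf{Main obstacle.} The only delicate point is justifying that the inductive construction in Step 2 never blocks, i.e., that the pre-solution can actually be propagated to all of $T$ simultaneously: across every finite subtree (settled by the choice of $\Lambda$) and along every infinite branch (settled by the extension lemma reused from \Cref{pro:qff} and \Cref{pro:jptrj}). One must also verify that the freedom at branching points of the infinite skeleton can be exercised coherently without forcing $L$ to become identically zero; since $L_\root$ is already fixed to be $1$, this coherence is automatic, but it is the step that requires the most care in a fully written-out proof.
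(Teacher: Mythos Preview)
Your proposal is correct and follows essentially the same approach as the paper: the paper's argument (given in the discussion immediately preceding the proposition) likewise defines the exceptional set as the union of the finite spectra of the blocks $\U_{T_v,T_v}$ over finite subtrees $T_v$, and then invokes the pre-solution extension mechanism from \Cref{pro:qff} (unique extension into finite subtrees when $\lambda\Id-\U_{T_v,T_v}$ is invertible; always-possible extension along infinite branches) to produce a non-zero $\lambda$-left eigenvector for every $\lambda$ outside this countable set. Your write-up is in fact a cleaner, more explicit rendering of exactly that argument.
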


\color{black}

\subsection{Random walks on trees}
\label{sec:RWT}

  \paragraph{Related works}
One can find, in the literature many papers about random walks on either complete trees (in which all nodes have degree $d$), or on super-critical Galton-Watson trees conditioned on non-extinction. Among others, Lyons \& al. \cite{MR1410689},Peres \&  Zeitouni \cite{MR2365486}, 
(see also Aid\'ekon \cite{MR2438700} for the random environment case, in which the edges of a tree, without leaves, are given different random probabilities; see also Ben Arous \& Hammond \cite{MR2969494}, Hammond \cite{MR3098688}).
Some questions on these models were published in 1994 (by Lyons \& al. \cite{MR1601753}) and many of them are still open.

Duquesne \cite{MR2184096} studies the set of visited nodes of a family of transient random walks on a complete $d$-ary tree: the set of visited nodes forms a subtree $T_p$ of the initial $d$-ary tree. The family of random walk considered is indexed by a parameter $p$, which is taken close to a critical point (for which the rate of convergence of the chain to $+\infty$ goes to zero). Duquesne shows that $T_p$ suitably rescaled, converges in distribution to a limiting continuum random tree (which is infinite, and two-sided, in the sense that two contour processes are needed to describe it).

In the case of random walks on trees, the $h$-invariant measure (see \Cref{theo:1.3}) admits a simpler form:
for any node $u=u_1\cdots u_h$  of a tree $T$ we denote by $u[j]=u_1\cdots u_j$ the ancestor of $u$ at level $j$, with $u[0]=\root$. A consequence of \Cref{pro:dadgty}$(ii)$,
\begin{pro}
If $\M$ is the transition of an irreducible random walk on $T$, then the $h$-invariant measure of $\M$ is $\bar\pi=(\bar\pi_u,u\in T)$ defined by  $\bar{\pi}(\root)=1$, and for $u\in T$,
\ben\label{eq:pibar}
\bar{\pi}_u=\bar{\pi}(\root)\prod_{j=1}^h\frac{ M_{u[j-1],u[j]}}{ M_{u[j],u[j-1]}},~~\textrm{ for } u\in T.
\een
Therefore, $\M$ is positive recurrent, if and only if $\sum_{u \in T} \bar{\pi}_u<+\infty$. 
\end{pro}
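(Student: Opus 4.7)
The plan is to iterate the leaf-addition formula \eqref{eq:gtey} of \Cref{pro:dadgty}$(ii)$ along the ancestral path of $u$, in the spirit of \Cref{theo:lgp}. The crucial simplification in the random walk case will be that $\M_{v,\ell}=0$ whenever $v$ is not a neighbor of $\ell$ in $T$, so the sum over $v\in\cro{\root,p(\ell)}$ in \eqref{eq:gtey} collapses to the single contribution $v=p(\ell)$.

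First, I would take an exhaustion $\{\root\}=T(0)\subset T(1)\subset\cdots$ of $T$ by finite subtrees, where $T(k)$ is obtained from $T(k-1)$ by adding a single leaf $u_k$, and invoke \Cref{theo:lgp}. The addition formula \eqref{eq:fgeht} gives
\[\pi^{T(k)}_{u_k}=\sum_{v\in\cro{\root,p(u_k)}}\pi^{T(k-1)}_v\,\M_{v,T_{u_k}}\big/\M_{u_k,p(u_k)}.\]
Since $\M$ is a random walk, for $v\in\cro{\root,p(u_k)}$ the only neighbor of $v$ that can lie in $T_{u_k}$ is $u_k$ itself, and this occurs exactly when $v=p(u_k)$; in that case $\M_{v,T_{u_k}}=\M_{p(u_k),u_k}$. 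Hence the recursion simplifies to
\[\pi^{T(k)}_{u_k}=\pi^{T(k-1)}_{p(u_k)}\,\frac{\M_{p(u_k),u_k}}{\M_{u_k,p(u_k)}}.\]

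Next, for a fixed node $u=u[h]$ with ancestral line $\root=u[0],u[1],\ldots,u[h]$, I would choose the exhaustion so that the nodes $u[1],\ldots,u[h]$ appear in this order, and iterate the simplified recursion. The product telescopes and yields~\eqref{eq:pibar}. The consistency part of \Cref{theo:lgp} guarantees that $\bar\pi_u$ does not depend on the chosen exhaustion, so $\bar\pi$ coincides with the $h$-invariant measure $\pi$ of $\M$. The equivalence between positive recurrence and $\sum_u\bar\pi_u<+\infty$ will then follow immediately from \Cref{theo:1.3}$(ii)$.

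The main point to check carefully is the vanishing $\M_{v,T_{u_k}}=0$ for every $v\in\cro{\root,p(u_k)}$ distinct from $p(u_k)$. This is where the random walk assumption $\M_{v,w}>0\Rightarrow w\in\{v,p(v)\}\cup c_T(v)$ is used in an essential way (as opposed to the general \AUD hypothesis, which would allow arbitrary descending jumps and hence would not yield such a clean product formula). Beyond this observation, the argument is just bookkeeping along the ancestral line.
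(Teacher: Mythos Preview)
Your argument is correct and is exactly the second route the paper's own proof mentions: apply \eqref{eq:fgeht} and observe that in the random-walk case only the term $v=p(u_k)$ survives, then iterate along $\cro{\root,u}$. The paper also offers the alternative of checking that $\bar\pi$ is reversible for $\M$ and then verifying the determinantal formula of \Cref{theo:1.3} directly (the branch $\cro{\root,u}$ has a unique positively-weighted spanning tree), but your leaf-addition computation via \Cref{theo:lgp} already identifies $\bar\pi$ with the $h$-invariant measure without that extra step.
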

Again, \Cref{pro:qff} still applies here: if a tree have several ends, here also  the 1-left eigenspace has dimension greater than 1. The cone generated by invariant measures may also have dimension greater than 1 (see e.g. \Cref{sec:qfgrht}).
\begin{proof} The fact that $\bar{\pi}$ is invariant by $\M$ can be proved by noticing that $\bar{\pi}$ is reversible with respect to $\M$, which is immediate. One can also use the formula \eref{eq:fgeht}, and observe that the sum in the right hand side simplifies since only transitions to neighbors are possible here.~\\
  Since RW are special case of \AUD transition matrices, the measure $\pi$ given in Theorem \ref{theo:1.3}, is also invariant by $\M$. As a matter of fact, $\pi$ and $\bar{\pi}$ coincides even if it is not visible. The denominator of the rhs of $\bar\pi$ in \eref{eq:pibar} equals $\prod_{w \in \cror{ \root, u}} M_{w,p(w)}$, so that it suffices to observe that the total weight of $W\l(\cro{\root,p(u)},T_u\r)$ coincides with the numerator: this is a consequence of the fact that the only spanning tree of the branch $(\cro{\root,p(u)},T_u)$ using the transition with positive weight is the path $(\root,u[1],\cdots,u[h-1],u)$. But the transition from $u[h-1]=p(u)$ to $u$ is weighted $M_{p(u),T_u}$.
\end{proof}

\subsection{Random walk in the  critical Galton-Watson tree conditioned on non-extinction}

Consider a Galton-Watson tree ${\bf T}$ with a critical offspring distribution $p=(p_k,k\geq 0)$ meaning that $\sum_{k\geq 0} kp_k=1$; let us assume further that  $p_0+p_1<1$. Denote by $Z_n$ the population in the $n$th generation of ${\bf T}$.
The law ${\cal L}({\bf T}~|~Z_n>0)$ converges when $n\to+\infty$ for the local topology, towards the law of an infinite tree ${\bf T}^{\infty}$ with a single end (see Lyons \& al. \cite{LPP}). The tree ${\bf T}^\infty$ is often referred to the critical Galton-Watson tree conditioned on non-extinction, in the literature. To define  ${\bf T}^\infty$,  we first describe its end $\p:=(u_0=\root,u_1,\cdots,)$ which is almost surely unique. To produce the end, sample a sequence $(c_j,j\geq 0)$ of iid random variables taken under the biased offspring distribution $\widehat{p}:=(kp_k,k\geq 0)$. Then, $c_j$ children are successively ascribed to $u_j$ (with $u_0=\root$), and a uniform node among them is chosen as node $u_{j+1}$. This produces an infinite path (the end $\p$), each node of which has an additional random progeny: the set of these additional nodes forms the neighborhood of $\p$.
Then, to produce ${\bf T}^\infty$, we graft at each neighbor of the end $\p$, iid (critical) Galton--Watson trees with offspring distribution $(p_k)$.

Now, consider a \underbar{random walk} on ${\bf T}^\infty$, with a general irreducible transition matrix $\M$. 
Here we see that $Q(\bT^\infty)=1$ (as defined in \eref{eq:QT}), which gives the uniqueness of the $1$ left eigenvector (up to a constant). In other words, the $h$-invariant measure $\pi$ generates the 1-left eigenspace.\par
    To decide the positive recurrence is more demanding, since it is equivalent to the finiteness of  
\[{\sf Total}:=\sum_{u\in T} \pi(u).\]
\paragraph{A class of solvable model.} We will add more hypothesis on $\M$ to exhibit a class of cases where the question can be treated with a simple argument.

We will make a quite common assumption: we will assume that the transition matrix $\M$ on  $T:={\bf T}^{\infty}$ is homogeneous, in the sense that $M_{u,v}$ depends only on the degree $|c_T(u)|$ of $u$ and the ``direction'', towards the root or  not. Take two sequences $(G(k),k\geq 0)$ and $(F(k),k\geq 0)$ of positive real numbers satisfying moreover
\ben\label{eq:bore} 1-  F(k)-kG(k)\geq 0,~~~~\textrm{ for all } k\geq 0.\een
Now, we require that:\\
-- for each node $u\in T$, $u\neq \root$, $\M_{u,p(u)} = F(|c_T(u)|)$,\\
-- if $v$ is a child of $u$, $\M_{u,v}=G(|c_T(u)|)$,\\
-- if $u\neq \root$, $\M_{u,u}= 1-  F(|c_T(u)|)-|c_T(u)|G(|c_T(u)|)$
which is non-negative by \eref{eq:bore}.~\\
For the root, assume that $\M_{\root,v}=G(|c_T(\root)|)$ for all children of $\root$, and $\M_{\root,\root}=1-|c_T(\root)|G(|c_T(\root)|)$.
\begin{pro}\label{pro:sqfd} For $X$ a random variable $p$ distributed, set 
\be f &:=& \E(X G(X)/F(X)),\\
    m &:=& \frac{1}{1-f} \E(1/F(X)),\\
    L &:=& \E\l(X\log(G(X)/F(X))\r).
\ee
\bir
\itr If $f \in (0,1)$, $ \E(1/F(X))<+\infty$, $L<0$, and there exists $x\in (0,1)$ such that, for a $\widehat{p}$-distributed random variable $Y$,
\[\sum_n \P\l( \log\l[\frac{ 1+   (Y-1)G(Y)m}{F(Y)}\r]\geq nx L\r)<+\infty,\]
then $M$ is almost surely positive recurrent. 
\itr If $L>0$ then $M$ is almost surely (also denoted a.s.) not positive recurrent.
\eir 
\end{pro}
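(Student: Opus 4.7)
The plan is to exploit the explicit formula \eqref{eq:pibar} for the $h$-invariant measure $\bar\pi$ and determine when $\Sigma:=\sum_{u\in T}\bar\pi_u$ is finite almost surely, which (by the proposition preceding \ref{pro:sqfd}) is equivalent to positive recurrence. I will decompose $\Sigma$ along the spine $\mathfrak p=(u_0,u_1,\dots)$ of $\mathbf T^\infty$ and the independent critical Galton--Watson subtrees hanging off it. Along the spine, $\bar\pi_{u_k}=\prod_{j=1}^{k}G(c_{j-1})/F(c_j)$, so that
\[
\frac{1}{k}\log\bar\pi_{u_k}\;=\;\frac{1}{k}\sum_{j=1}^{k-1}\log\!\bigl(G(c_j)/F(c_j)\bigr)+O(1/k)\;\build{\longrightarrow}{k\to\infty}{}\;L
\]
by the strong law of large numbers applied to the i.i.d.\ $\hat p$-distributed sequence $(c_j)$; note that $\E_{\hat p}[\log(G/F)]=\E_p[X\log(G(X)/F(X))]=L$. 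If $L>0$, then $\bar\pi_{u_k}\to\infty$ exponentially, so $\sum_k\bar\pi_{u_k}=+\infty$ a.s., which proves (ii).

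For (i), I will handle the GW subtrees by a recursion argument. For a (finite) rooted tree $\tau$ with root $r$, set $S(\tau):=\sum_{v\in\tau}\prod_{i=0}^{|v|_\tau-1}\!\frac{G(\deg v[i])}{F(\deg v[i+1])}$. Conditioning on the root's children, an i.i.d.\ critical GW tree $\tau$ satisfies
\[
S(\tau)\;=\;1+G(D)\sum_{i=1}^{D}\frac{S(\tau_i)}{F(D_i)},
\]
where $D\sim p$ is the root degree and $(\tau_i)$ are i.i.d.\ copies. Taking conditional expectations and setting $\alpha:=\E[S(\tau)/F(D')]$ with $D'\sim p$ independent, one finds $\alpha=\E[1/F(X)]+\alpha f$, so $\alpha=m$ (this is where $m$ enters naturally and why the hypotheses $f\in(0,1)$ and $\E[1/F(X)]<\infty$ appear). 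Hence $\E[S(\tau)\mid D=d]=1+dG(d)m$.

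Now each spine vertex $u_k$ carries $c_k-1$ independent GW subtrees $(\tau_{k,i})$, so
\[
\Sigma\;=\;\sum_{k\ge 0}\bar\pi_{u_k}B_k,\qquad B_k\;:=\;1+G(c_k)\sum_{i=1}^{c_k-1}\frac{S(\tau_{k,i})}{F(\deg r_{k,i})},\qquad \E[B_k\mid c_k]=1+(c_k-1)G(c_k)m.
\]
Conditionally on the spine $\sigma((c_j)_{j\ge 0})$, the $B_k$ are independent with the stated means, and by Tonelli, $\Sigma<\infty$ a.s.\ will follow once we prove
\[
\Xi\;:=\;\sum_{k\ge 0}\bar\pi_{u_k}\,[1+(c_k-1)G(c_k)m]\;<\;+\infty\quad\text{a.s.}
\]
Writing $\log(\bar\pi_{u_k}[1+(c_k-1)G(c_k)m])=\sum_{j=1}^{k-1}\log(G(c_j)/F(c_j))+W_k+O(1)$ with $W_k:=\log\!\bigl[(1+(c_k-1)G(c_k)m)/F(c_k)\bigr]$ i.i.d.\ $\sim \log[(1+(Y-1)G(Y)m)/F(Y)]$, the LLN gives the sum-term as $kL+o(k)$ a.s., while the hypothesis $\sum_n\P(W\ge -nx|L|)<\infty$ (this is how I read the tail condition, with $L<0$) together with Borel--Cantelli yields $W_k\le -kx\,L$ for all but finitely many $k$, so the whole log is $\le k(1-x)L+o(k)$ with $(1-x)L<0$. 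This gives geometric decay of the general term of $\Xi$, whence $\Xi<\infty$ a.s., and Tonelli concludes.

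The main obstacle is packaging the two layers of randomness cleanly: one must be careful that the Borel--Cantelli step concerns $\E[\Sigma\mid\text{spine}]=\Xi$ rather than $\Sigma$ itself, and then use $\Sigma\ge 0$ and Tonelli to transfer a.s.\ finiteness from $\Xi$ back to $\Sigma$. A secondary subtlety is the identification of the constant $m$: the fixed-point equation for $\alpha$ only makes sense under $f<1$ and $\E[1/F(X)]<\infty$, which is precisely the integrability framework assumed in (i).
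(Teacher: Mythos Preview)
Your proposal is correct and follows essentially the same route as the paper: decompose the total mass along the spine and the hanging critical Galton--Watson subtrees, identify $m$ via the fixed-point recursion $\alpha=\E[1/F(X)]+\alpha f$, reduce to the spine-conditional expectation $\Xi$, and control its terms by the strong law of large numbers on the product together with Borel--Cantelli on the boundary factor $W_k$. The only cosmetic difference is that the paper states up front that it will show $\E({\sf Total})<\infty$, whereas in fact (as you correctly note) what is actually proved is that the spine-conditional expectation is finite a.s., which suffices by nonnegativity.
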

\begin{proof}
  $(i)$ We will prove that when the hypothesis are satisfied, $\E({\sf Total})<+\infty$, so that ${\sf Total}<+\infty$ a.s.
  
  We still denote by $\s(a,u)$ the successor of $a$ in the direction of $u$, by \eref{eq:pibar} (setting $\pi_\root=1$),
\ben\label{eq:ehtyrjui}
\pi(u) &=& \frac{\M_{\root,s(\root,u)}}{\M_{u,p(u)}} \prod_{v\in\croc{\root,u}} \frac{\M_{v,\s(v,u)}}{M_{v,p(v)}}
      = \frac{G(|c_T(\root)|)}{F(|c_T(u)|)} \prod_{v\in\croc{\root,u}} \frac{G(|c_T(v)|)}{F(|c_T(v)|)}.
\een
Along the infinite branch, the $|c_T(v)|$ are iid and distributed according to $\widehat{p}$. If one takes the node $u_h$ in the end at level $h$, the value of $\pi(u_h)$ is (up to the first factor), a product of iid random variables, quite easy to study.  However, the summation of $\pi(u)$ must be done on  $\bT^\infty$, not only on $\p$: even if all the subtrees rooted at the neighbors of $\p$ have the same distribution, and if their sizes are finite a.s., each of them has infinite size in expectation, which means that we must take them into account.

A node $u\in \p$ such that $|c_T(u)|=k$, has $k-1$ children out of $\p$, says  $v_1(u),\cdots,v_{k-1}(u)$. The subtree $\bT^\infty_{v_i(u)}$ of $\bT^\infty$ rooted at $v_i(u)$ is a critical $p$-Galton-Watson tree independent from the rest of $\bT^\infty$. 

Let us treat all the invariant measures of all these trees separately, as if they were disconnected from $\bT^\infty$. Denote by $\rho^{v_i(u)}$ the invariant measure of $\M$ over $\bT^\infty_{v_i(u)}$ (the subtree of  $\bT^\infty$ rooted at $v_i(u)$).\par
Take a critical Galton-Watson tree $t_{GW}$, equipped with the  transition matrix $\M$, and let $\rho$ be its invariant distribution normalized by $\rho_{\root}=1$. Consider the total mass $S_{GW}=\sum_{u\in t_{GW}} \rho_u$ of the invariant measure on $t_{GW}$. Denote by $S_{GW}[{d}]$ a random variable distributed as $S_{GW}$, conditioned by $|c_{t_{GW}}(\root)|=d$. 

First, the $h$-invariant measure $\pi$ on $\bT^\infty$ restricted to the subtree $\bT^\infty_{v_i(u)}$ satisfies: for all $v_i(u)$,
\ben \l(\pi(w), w \in \bT^\infty_{v_i(u)}\r)&=& \pi(u)\frac{\M_{u,v_i(u)}}{\M_{v_i(u),u}}\l(\rho^{v_i(u)}_w,  w \in \bT^\infty_{v_i(u)}\r)
=\pi(u)\frac{ G(|c_T(u)|)}{F(|c_T(v_i(u))|)}\l(\rho^{v_i(u)}_w,  w \in \bT^\infty_{v_i(u)}\r),\een
which implies that
\[\sum_{w \in T_{v_i(u)}} \pi(w) =  \pi(u)G(|c_T(u)|)
  \frac{S^{v_i(u)}[|c_T(v_i(u))|]}{F(|c_T(v_i(u))|)},\]
where the $S^{v_i(u)}[|c_T(v_i(u))|]$ are independent copies of $S_{GW}[|c_T(v_i(u))|]$;  now compute the total mass of the invariant measure ${\sf Total}$ by assembling the subtrees hanging from the same nodes of $T$, 
\ben{\sf Total}&=&\label{eq:2-total}\sum_{u\in \p}\pi(u)\l[1+ \sum_{i=1}^{|c_T(u)|-1}   G(|c_T(u)|)\frac{ S^{v_i(u)}[|c_T(v_i(u))|]}{F(|c_T(v_i(u))|)} \r].
\een
Hence 
\ben \label{eq:total}
\E\l[{\sf Total}~|~ (c_T(u),u\in \p)\r] &=&\sum_{u\in \p}\pi(u)\big[1+ (|c_T(u)|-1) G(|c_T(u)|) m \big]\een
where
\[m = \E\l[\frac{ S^{v_i(u)}[|c_T(v_i(u))|]}{F(|c_T(v_i(u))|)} \r],\]
and this last quantity does not depend on $u$ since $|c_T(v_i(u))|$ is the number of children of the descendants of $u$ (they are $p$-distributed). 

We have for all $d\geq 0$, by decomposing a Galton-Watson tree with degree $d$ at its root (and then divide everything by $F(d)$, by convenience),
\ben\label{eq:sf} \frac{S_{GW}[d]}{F(d)} \eqd \frac{1}{F(d)}+\sum_{k=1}^d \frac{G(d)}{F(d)} \frac{S_k[X_k]}{F(X_k)},\een
where the $(X_i)$ are iid and follow the offspring distribution $(p_k)$, and where the $S_{GW}^{(k)}[X_k]$, conditioned by $(X_k, k=1,\cdots,d)=(x_k, k=1,\cdots,d)$, are independent copies of $S_{GW}[x_k]$, respectively.

Taking the expectation at both side in \eref{eq:sf}, gives
\ben \label{eq:m} m = \sum_{k\geq 0}\frac{p_k}{F(k)}+ m \sum_{k\geq 1} k p_k\frac{G(k)}{F(k)},  \een
so that, since 
\[\sum_{k\geq 1} k p_k \frac{G(k)}{F(k)}=\E\l(\frac{XG(X)}{F(X)}\r)=f\]
and  by hypothesis $f\in (0,1)$ and $\E(1/F(X))<+\infty$, so that \eref{eq:m}, leads to
\ben m = \frac{1}{1-f}\sum_{k\geq 0}\frac{p_k}{F(k)}= \frac{1}{1-f}\E(1/F(X))<+\infty.\een
Now, we can plug again the value of $\pi(u)$ on the end $\p$:
\ben\label{eq:rshtry} \E\l[{\sf Total}~\Big|~ (|c_T(u)|,u\in \p)\r]=  G(|c_T(\root)|)\sum_{u\in \p}\l( \prod_{v\in\croc{\root,u}} \frac{G(|c_T(v)|)}{F(|c_T(v)|)}\r)\frac{ 1+   (|c_T(u)|-1)G(|c_T(u)|)m}{F(|c_T(u)|)}\een
By using the fact that the degree on $\p$ are iid and follow the biased distribution $\widehat{p}$ and the strong law of large number, denoting by $u_n$ the node at distance $n$ of $\root$ on $\p$,
the logarithm of the contribution of term $u=u_n$ in the sum, in the right hand side of \eref{eq:rshtry}
\[\log\l[\l(\prod_{v\in\croc{\root,u_n}} \frac{G(|c_T(v)|)}{F(|c_T(v)|)}\r)\r]+\log\l[\frac{ 1+   (|c_T(u_n)|-1)G(|c_T(u_n)|)m}{F(|c_T(u_n)|)}\r].\]
The first term behaves as $n \E\l(\log\l(\frac{G(Y)}{F(Y)}\r)\r)$ by the law of large number 
where $Y$ is $\widehat{p}$-distributed, and we have 
\[
\E\l(\log\l(\frac{G(Y)}{F(Y)}\r)\r)=\E\l(X\log\l(\frac{G(X)}{F(X)}\r)\r)=L.
\]
By hypothesis  $L<0$, so that  $n \E\l(\log\l(\frac{G(Y)}{F(Y)}\r)\r)$ goes linearly to $-\infty$.

In order to conclude we need that $\log\l[\frac{ 1+   (|c_T(u_n)|-1)G(|c_T(u_n)|)m}{F(|c_T(u_n)|)}\r]$ is not too often ``larger'' than $nL$, and the existence of $x\in (0,1)$ such $\sum_n\P\l( \log\l[\frac{ 1+   (Y-1)G(Y)m}{F(Y)}\r]\geq nx L\r)<+\infty$ ensures that this will occur a.s. only a finite number of times by Borel--Cantelli Lemma. \\ 
$(ii)$ If $L>0$, for $u=u_n$ be the $n+1$-th node on $\p$, using \eref{eq:ehtyrjui}, we see that $\log(\pi(u_n))$ goes to $+\infty$ a.s.

\end{proof}

\section{\Cld{} Markov chains on trees}
\label{sec:CLD}

The main message we want to convey here is that \ALD Markov chains are more complex than \AUD Markov chains. There are several reasons for this, some of them have been explained in \Cref{sec:W}. Even if the tree is a single line (isomorphic to $\N$), \ALD transition matrices $D$ may have a cone of invariant positive measures with any dimension ranging from 0 to $+\infty$ (included) (see \cite[Theorem 3.1]{MR4669754}; still on $\N$, when $\U$ is \AUD, the picture is radically different, since we have a formula to compute the unique invariant measure.

On infinite general trees, the same picture can be drawn with some variations (the picture is already quite complex in the case of \AUD).

Nevertheless, since lines are infinite trees, the example in \cite[Theorem 3.1]{MR4669754} of \ALD Markov chain having no invariant measure exemplified the same property in infinite trees (and the same holds for the examples with any finite or infinite dimensions even on a single line, that is, on a tree with a single end).

However, some analogous of \cite[Theorem 3.1]{MR4669754} can be stated here in the general case of \ALD transition matrices: the main driving idea is that, the time reversal of any \AUD Markov chains with respect to some invariant measure (such a measure exists) is a \ALD Markov chain (having the same invariant measure). The other way round does not hold in general since the existence of an invariant measure of \ALD transition matrices is not ensured; when there are several invariant measures, there are several ways to define the time reversal processes (each of them being a \AUD transition matrix).
\begin{theo} Let $\;\U$ be an irreducible \AUD transition matrix on a finite or infinite tree $T$, and let $\pi:=(\pi_v,v\in T)$ be an invariant measure of $\;\U$. Set, for all $u,v \in T$,
  \ben \DD_{v,u}=\pi_u\U_{u,v}/\pi_v, ~~~u,v\in T.\een
  \bir
  \itr $\DD$ is an \ALD irreducible transition matrix, which possesses also $\pi$ as invariant measure
  \itr $\DD$ is positive recurrent iff $\U$ is,
  \itr $\DD$ is recurrent iff $\U$ is,
  \itr If $\U$ has a right eigenvector $R=(R_u,u\in T)$ with positive coordinate (and associated with the eigenvalue 1) then $\bma \pi_u R_u,u\in T\ema$ is an invariant measure for $\DD$ (so that this property extends to cones). 
  \eir
\end{theo}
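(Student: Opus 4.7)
The approach is to recognise $\DD$ as the $\pi$-time-reversal of $\U$, and derive each of $(i)$–$(iv)$ directly from the duality identity
\[\pi_v\DD_{v,u}=\pi_u\U_{u,v},\qquad u,v\in T.\]
For $(i)$, I would first sum this identity over $u$ and use $\pi\U=\pi$ to obtain $\sum_u\DD_{v,u}=\pi_v^{-1}(\pi\U)_v=1$, so $\DD$ is stochastic. The \ALD property is immediate from the positivity of $\pi$: if $\DD_{v,u}>0$ then $\U_{u,v}>0$, so by the \AUD hypothesis either $v=p(u)$ or $v\in T_u$; in both cases $u\in c_T(v)\cup\cro{\root,v}$, which is exactly the \ALD condition. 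Invariance of $\pi$ under $\DD$ reads $(\pi\DD)_u=\sum_v\pi_u\U_{u,v}=\pi_u$. Finally, any $\U$-path $v=v_0,v_1,\dots,v_n=u$ of positive weight yields a reversed $\DD$-path $u=v_n,\dots,v_0=v$ whose total weight telescopes to $(\pi_{v_0}/\pi_{v_n})\prod_i\U_{v_{i-1},v_i}>0$, so irreducibility transfers.

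For $(iii)$, a straightforward induction on $n$ using the duality relation shows
\[\DD^n_{u,v}=(\pi_v/\pi_u)\,\U^n_{v,u},\qquad n\geq 1,\]
and in particular $\DD^n_{u,u}=\U^n_{u,u}$. The diagonal Green functions therefore agree, so recurrence is equivalent for the two chains. For $(ii)$, I would invoke the classical fact that an irreducible transition matrix is positive recurrent if and only if it admits an invariant probability measure, and in that case this measure is the unique invariant measure up to a multiplicative constant. If $\U$ is positive recurrent, the given $\pi$ must be proportional to the invariant probability, whence $\sum_u\pi_u<\infty$; since $\pi$ is also invariant for $\DD$, the latter too admits an invariant probability, so is positive recurrent. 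The converse is symmetric because the duality relation is involutive ($\U_{u,v}=\pi_v\DD_{v,u}/\pi_u$), placing $\U$ and $\DD$ in identical roles.

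Statement $(iv)$ is then a one-line computation: given $R>0$ with $\U R=R$, the measure $\mu_u:=\pi_u R_u$ has positive coordinates and satisfies
\[(\mu\DD)_u=\sum_v\pi_v R_v\,\DD_{v,u}=\sum_v R_v\,\pi_u\U_{u,v}=\pi_u(\U R)_u=\pi_u R_u=\mu_u,\]
the second equality being the duality identity. No substantial obstacle arises anywhere: the entire content is the combinatorial bookkeeping of the duality identity and its compatibility with matrix powers, left action and right action. The only point deserving care is the index swap $u\leftrightarrow v$ in $(i)$, which is precisely the mechanism by which the \AUD/\ALD dichotomy is flipped under time reversal — this is the conceptual content of the theorem and justifies its placement in the \ALD section.
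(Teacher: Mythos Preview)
Your proof is correct and follows essentially the same approach as the paper, which also treats $\DD$ as the $\pi$-time-reversal of $\U$ and exploits the telescoping of the $\pi$-ratios along closed paths. The only cosmetic difference is in $(iii)$: the paper phrases recurrence via the first-return probability $\sum_{k}\sum_{(p_0,\dots,p_k)}\prod\U_{p_i,p_{i+1}}=1$ over excursions from $\root$, whereas you use the equivalent Green-function criterion $\sum_n\U^n_{u,u}=\sum_n\DD^n_{u,u}$; both rest on the identical observation that loop weights at a single vertex coincide for $\U$ and $\DD$.
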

[The proof are similar to those of  \cite[Theorem 3.1]{MR4669754}].
\begin{proof} $(i)$ this is standard, $(ii)$ the positive recurrence is equivalent to the fact that there exists an invariant measure with is summable $\sum \pi_i<+\infty$ (in which case uniqueness follows), so that it is clear, $(iii)$ the recurrence can be expressed in terms of the total weights of the paths starting at $\root$ and ending at $\root$: it is equivalent, for $\U$ to the fact that
  \ben\label{eq:fhetr} \sum_{k} \sum_{(p_0,\cdots,p_k)} \prod_{i=0}^{k-1}\U_{p_i,p_{i+1}}=1\een where the sum is taken on all $k\geq 1$, and all paths $(\root=p_0,p_1,\cdots,p_k=\root)$ with $k$ steps such that $p_i\neq \root$ for $i\in\{1,\cdots,k-1\}$. Since the starting and ending points $p_0$ and $p_k$ are the same, we have $\sum_{k} \sum_{(p_0,\cdots,p_k)} \prod_{i=0}^{k-1}\U_{p_i,p_{i+1}}=\sum_{k} \sum_{(p_0,\cdots,p_k)} \prod_{i=0}^{k-1}\DD_{p_i,p_{i+1}}$ and the conclusion follows. $(iv)$ Since $\sum_b \U_{a,b}R_b=R_a$, replacing $\U_{a,b}$ by $\DD_{b,a}\pi_b/\pi_a$ in this equation gives $\sum_b \pi_b \DD_{b,a}R_b=\pi_aR_a$ as desired.
\end{proof}

\section{Combinatorial and algebraic aspects of \AUD Markov chains}
\label{sec:CAA}
    
\subsection{Multicontinuous fractions and Markov chains on trees}

\subsubsection{Continuous fractions, Mötzkin paths, and birth and death processes}
  
Going back to Euler \cite{Euler} (see Sokal \cite{MR4602842} for a recent historical account), the continuous fractions are used in mathematics in various contexts, notably, to provide an alternative representation of real or complex numbers. By appropriate use of some associated toolboxes, it gives access to some unavailable information that is useful in many applications. For example, the proof of the irrationality of $\zeta(3)$ by Apéry \cite{MR3363457} relies on the representation $\zeta(3)$ as a continuous fraction \[\zeta(3)=\cfrac{6}{w_0-\cfrac{1^6}{w_1-\cfrac{2^6}{w_2-\cfrac{3^6}{\cdots}}}},~~\textrm{with}~~~w_k=(2k+1)(17 k(k+1)+5),\]
although Apéry did not use this formula, he worked with the standard convergents of this continuous fraction, and used the standard speed of convergence argument, designed for continuous fractions, which provides a way to prove irrationality under certain conditions.\par
  Continuous fraction representations also play an important role in the analysis of birth and death processes. The main technical point, discovered by  Flajolet \cite{Flajolet} (and pursued in a series of papers including \cite{FG}), is that some formal series associated with some families of weighted paths (as generating functions), satisfy some algebraic decompositions, which leads to the fact that they can be expanded as formal continuous fractions.

 Let us be a bit more explicit here. A Mötzkin path of length $k$ is a path $p=(p_0,\cdots,p_k)$ with $k$ steps, with non negative entries $p_i\geq 0$, and increments $p_{i+1}-p_{i}\in\{-1,+1,0\}$; the length of a path $\ell(p)$ is its number of steps. The set of all  Mötzkin paths (regardless of their lengths)  but starting at $p_0=0$ is denoted ${\cal M}_{0,*}$.
 
 Given some weights $(w_{a,b}, a,b \in \mathbb{N}, |a-b|\leq 1)$ associated with the steps, the weight of a path  $W(p)$ is defined to be simply $W(p)=\prod_{j=1}^{\ell(p)} w_{p_{j-1},p_j}$, and the total weight of a set $S$ of \Motzkin paths is defined to be
 \[{\cal W}(S)=\sum_{p\in S} W(p).\]
 When $S={\cal M}_{0,*}$  (or any other subset of it), the series ${\cal W}(S)$ is well defined under some conditions on the weights: Of course, it may be natural to take complex or real numbers (under convergence conditions), but  formal variables such as $w_{a,b}=x \bar{w}_{a,b}$ (where $x$ is a formal variable used to count the number of steps and $\bar{w}_{a,b}\in \C$) are often preferred since the formal power series of a set of paths containing a finite number of paths of each length is well defined; this is the case, for example, for the set of all Mötzkin paths with a given starting point.
 
Flajolet \cite{Flajolet} also considers non-commutative formal variables $w_{a,b}$ (we will come back to this shortly in order to establish a formula of the greatest degree of generality). 

\paragraph{Continuous fractions.}

Let ${\cal M}_{h,h}$ be the set of Mötzkin paths starting and ending at level $h$, and never going below this level
\ben
{\cal M}_{h,h}= \cup_{k\geq 0} \{ p : (p_0,\cdots,p_k),~ |p_i-p_{i-1}|\leq 1, p_0=p_k=\min (p)=h\}.
\een
Each path $p$ in the set of Mötzkin paths ${\cal M}_{0,h}$ (starting at 0 and ending at level $h\geq 0$, can be seen as the concatenation of a path of ${\cal M}_{0,0}$, then a step $(0,1)$, then a path from ${\cal M}_{1,1}$, then a step $(1,2)$ (when $h\geq 2$),...
Finally, in terms of generating functions it gives
\[\mathcal{W}({\cal M}_{0,h})=\l[\prod_{j=0}^{h-1} \mathcal{W}({\cal M}_{j,j})w_{j,j+1}\r]\mathcal{W}({\cal M}_{h,h}).\] 
These weighted paths are of particular interest for birth and death Markov chains, in which case an history of the population is a weighted Mötzkin path, and the probability is given by the product of the transitions (that is $(w_{a,b})=(M_{a,b}))$; this can be enriched with formal variables, in order to produce generating functions, counting for example, the number of steps, or/and the passage number at some positions.
For example, since the total weight $\mathcal{W}({\cal M}_{0,0})=\sum_{p\in {\cal M}_{0,0}} W(p)$ is related to the number $N$ of passages at zero of the $w$-Markov chain (when $w$ is a transition matrix $w=M$)
\ben\label{eq:rec1} \mathcal{W}({\cal M}_{0,0})=\sum_{i=0}^{+\infty} \E_0(\1_{X_i}=0)=\E(N).\een
When the weight is $w_{a,b}=M_{a,b}x$ instead, we get
\ben\label{eq:rec2} \mathcal{W}({\cal M}_{0,0})=G_0(x)=\sum_{p: p\in{\cal M}_{0,0}} W(p)=\E(x^{N-1})\een
where $G_0(x)$ is the generating Green function, which measures the total number of passage to $0$, starting from 0. Formula \eref{eq:rec1} is crucial for establishing recurrence, and \eref{eq:rec2} is crucial for positive recurrence.

The fundamental decomposition of Mötzkin path leads to continuous fractions as follows:\\
-- Any element $p$ in ${\cal M}_{h,h}$ can be either:\\
$\bullet$ reduced to the zero length path $p=(h)$,\\
$\bullet$ or, starts by an horizontal step followed by an element of ${\cal M}_{h,h}$,\\
$\bullet$ or, starts by a up step, followed by an element of ${\cal M}_{h+1,h+1}$, and then, a down step, followed by an element of ${\cal M}_{h,h}$.\\
Setting $W_h:={\cal M}_{h,h}$ we have,
\ben\label{eq:dqg} W_h=1+ w_{h,h}W_h+w_{h,h+1}W_{h+1}w_{h+1,h}W_h.\een
One then has, when everything is well-defined,
\ben W_h=\frac{1}{1-w_{h,h}-w_{h,h+1}w_{h+1,h}W_{h+1}},~~\textrm{ for all }h\geq 0,\een
and finally, this gives, by iteration of this formula
\ben\label{eq:qgqd}
  W_0= \cfrac{1}{1-w_{0,0}-\cfrac{w_{0,1}w_{1,0}}{1-w_{1,1}-\cfrac{w_{1,2}w_{2,1}}{1-w_{2,2}-\cfrac{w_{2,3}w_{3,2}}{1-w_{3,3}\cdots}}}}
\een
Flajolet \cite{Flajolet} noticed that the formula for Mötzkin paths staying below level $h$ is just given by the $h$-th convergent of \eref{eq:qgqd} (where only appear the weights of the steps which do not touch level $h+1$, and of course, this is consistent with the fact that taking zero as the weight of a step, just forbids to a walker to take that step) \footnote{this formula allows to compute the probability that a birth and depth process comes back at zero before reaching a given level $h$.};
he also explains that, for example, replacing $w_{a,b}$ by $w_{a,b}y^b$ for a formal variable $y$, amounts to multiplying the global weight of a path $p$ by $y^{\sum p_i}$: hence, the generating function of the path, is enriched, and now, there is an additional variable $y$ at the exponent the ``discrete area'' $\sum p_i$ of $p$ and the corresponding generating function possesses then a continuous fraction representation. 
Furthermore, the treatment of special cases provides nice formulas, and identities. For example, the generating function of Dyck paths (a particular case of Mötzkin paths, where the increments are either $+1$ or $-1$, with weight $w_{a,b}=x\, 1_{|a-b|=1}$), is known to solve $G(x)=1+x^2G(x)^2$, because a Dyck path, is either of length 0, or can be written $u D_1dD_2$ where $u$ and $d$ are a step $(0,1)$ and $(1,0)$ and $D_1$, $D_2$, the natural decomposition with a pair of Dyck paths. The solution is $G(x)=\frac {1}{2\,{x}^{2}} \left( 1-\sqrt {1-4\,{x}^{2}} \right) $, and by \eref{eq:qgqd}, with $w_{i,i}=0$, $w_{i,i-1}=w_{i,i+1}=x$, we get
\ben
\frac {1}{2\,{x}^{2}} \left( 1-\sqrt {-4\,{x}^{2}+1} \right) =\cfrac{1}{1-\cfrac{x^2}{1-\cfrac{x^2}{\cdots}}}.
\een
In other words, each family of Mötzkin paths for which another formula is available provides such an identity.

These analytic formulas are exploited in \cite{Flajolet} who provides many identities of this kind (see also Conrad \& Flajolet \cite{CF}, Flajolet \& Guillemin \cite{FG} with a special focus on the connection with birth and death processes (see in particular Table 3).
Flajolet \cite{Flajolet} states his first result using non-commutative variables, and formal expansion of continuous fractions using non commutative variables. In short, the combinatorial properties of the Mötzkin path, encode the statistical properties of birth and death processes (in particular Green functions), which are, in turn, represented by continuous fractions, allowing many connections with other branches of mathematics (the abstract of the paper by Flajolet \cite{Flajolet} gives an idea of the depth of these connections).

\subsubsection{Random walks on trees and multicontinuous fractions}

A large part of the previous discussion on Mötzkin paths can be generalized  for paths on trees (some of the considerations discussed below can be found in Varvak \cite{AV}, up to some change of notation and perspective; other occurrences  of multicontinuous fractions in relation with combinatorial objects can be found in Albenque \& Bouttier \cite{AB}).

Let us start this discussion with \textbf{a random walk on an infinite $d$-ary tree $T$} (where each node has exactly $d$-children) for some $d\geq 2$.
 
Let us define the set of generalized Mötzkin paths ${\cal M}_{u,u}$, as the set of paths of any length $p=(p_0,\cdots,p_k)$  in the subtree $T_u$ (rooted at $u$), that start and end at $u$, and for which for all $i$, $p_i$ and $p_{i+1}$ are neighbors in the tree, or possibly $p_{i}=p_{i+1}$ (these are random walk trajectories). Assuming that each step $(p_i,p_{i+1})$ has weight $w_{p_i,p_{i+1}}$ for a certain weight function, the corresponding generating function $W_u$ (the sum  of the weights $W(p)=\prod_{j=1}^{\ell(p)} w_{p_{j-1},p_j}$ taken for all paths $p$ in ${\cal M}_{u,u}$), satisfies a recursion close to \eref{eq:dqg}:
\ben
W_u=1+w_{u,u}W_u+\sum_{v \in c_T(u)} w_{u,v}w_{v,u} W_vW_u
\een
and again, these formulas may be valid in $\C$ or in a formal sense (or both) depending on the convergence properties of the generating functions $W_v$ and $W_u$.
This gives
\ben\label{eq:tree_CT}
W_u=\frac{1}{1-w_{u,u}-\sum_{v \in c_T(u)} w_{u,v}w_{v,u} W_v}
\een
and again, one can expand $W_v$ in the same way, according to its children, and get
\ben\label{eq:tree_CT2}
  W_u=\cfrac{1}{1-w_{u,u}-\sum_{v \in c_T(u)} w_{u,v}w_{v,u} \cfrac{1}{1-w_{v,v}-\sum_{w \in c_T(v)} w_{v,w}w_{w,v}W_w}};
\een
this expansion is done by applying recursively \eref{eq:tree_CT}, and replacing the term $W_{n}$ relative to nodes $n$ at a certain depth, by a fractions depending on the $W_m$, where the $m$ are the children of $n$. 

Again, the $h$-th convergent of this fraction is the generating function of paths staying below level $h$.

\paragraph{Green function}  If $w$ is a Markov chain transition matrix on $T$, the associated Green's generating function $G_u(x)=\sum_{t\geq 0} \E_u( 1_{X_t=u} x^t)$ can be expressed as in \eref{eq:rec2}. 
For a $w$-Markov chain let $RT(u)=\inf\{t>0: X_t=u\}$  the first return time to $u$.
Set $H_u(x)= \E_u(x^{RT(u)})$. We have 
\[G_\root(x)= 1+\sum_{k=1}^\infty H_\root(x)^k=\frac{1}{1-H_\root(x)} . \]
In the case of the random walk, by setting for all $u\in T$,
\ben\label{eq:tree_CTp}
G_u(x)=\frac{1}{1-x\M_{u,u}-x^2\sum_{v \in c_T(u)} \M_{u,v}\M_{v,u} G_v(x)}
\een
which allows to identify $H(x)=x\M_{\root,\root}+x^2\sum_{v \in c_T(\root)} \M_{\root,v}\M_{v,\root} G_v(x)$.
The Green function $G_{\root}$ can be used to determine the recurrence, or positive recurrence of the $M$-Markov chain.

\paragraph{Examples}

The following example is given under a general form of weighted paths; we will come back to Markov chains considerations eventually.

Consider an infinite complete binary tree $T$, in which each node has 2 children.
Each node $u=u_1\cdots u_h$ at depth $h$ is a word in $\{0,1\}^h$. Denote by $|u|_0$ the number of zeros in $u$. Consider a random walk on $T$ and weight each oriented edge $(u,v)\in T$ by $M_{u,v}$ (an element of $\C$ or a formal variable).

We assume that the local transitions from all $u\neq \root$, $M_{u,.}$ depend only on $|u|_0$, in the sense that
\ben\label{eq:rlstuff}(M_{u,u0},M_{u,u1},M_{u,u},M_{u,p(u)})=(r_{|u|_0},\ell_{|u|_0},s_{|u|_0},\mathsf{p}_{|u|_0})\een where the variable name are chosen to refer to the moving direction from $u$ which are ``right'', ``left'',''stay'', ``parent''.
Since the root has no parent, we need to relax this assumption at the root, and take new parameters $M_{\root,\root},M_{\root,0},M_{\root,1}$ for the root. 
The set of parameters for this model is $\l[(M_{\root,\root},M_{\root,0},M_{\root,1}), \big((s_k,\ell_k,r_k,\mathsf{p}_k),k\geq 0\big)\r]$. 

The generating function $G_{u}$ of nearest neighbor paths starting at $u$, coming back to $u$, and remaining in $T_u$ (counted according to the number of steps) depends only on $|u|_0$, except for $u=\root$ (since the transition in the subtrees $T_u$ and $T_{u'}$ such that $|u|_0=|u'|_0$ can be identified, by translation).

Denote by $g_{k}$ the common value of the $G_{u}$ with same $|u|_0=k$, except for $u=\root$ which is treated separately,  again using a simple decomposition according to the first step, 
\ben\label{eq:groot} G_{\root}&=&
1+ \l[M_{\root,\root}+ M_{\root,1}{\sf p}_{0}g_{0} +M_{\root,0}{\sf p}_{1} g_{1}\r]G_\root\\
&=& \frac{1}{1- \l[M_{\root,\root}+ M_{\root,1}{\sf p}_{0}g_{0} +M_{\root,0}{\sf p}_{1} g_{1}\r]}.\een
We have, for $k\geq 0$, by counting the evolution of the number of zeros in a node representation when one moves along an edge,
\ben\label{eq:gk}
g_{k}&=&1+ \l[s_k+ \ell_k{\sf p}_kg_{k} +r_k {\sf p}_{k+1} g_{k+1}\r]g_{k},\\
\label{eq:gk2}&=&\frac{1}{1-\l[s_k+ \ell_k{\sf p}_kg_{k} +r_k{\sf p}_{k+1} g_{k+1}\r]},\een
which is a multicontinuous formula for all the $g_k$ and for $G_\root$ (using \eref{eq:groot} too).
But \eref{eq:gk} can be solved in $g_k$ in terms of $g_{k+1}$,
\ben\label{eq:gk3}
g_k:=\frac { 1- {\sf p}_{k+1}\,g_{k+1}\,r_k\,-s_k-\sqrt {{{\sf p}_{k+1}}^{2}{g_{k+1}}^{2}{r_k}^{2}+2\,{\sf p}_{k+1}\,g_{k+1}\,r_k(s_k-1)-4\,{\sf p}_k
\,\ell_k+({s_k}-1)^2} }{2\,{\sf p}_k\,\ell_k}
\een
from it one sees a very different picture of  branching infinite nested radicals of all the $g_i$ (and of $G_{\root}$).

Infinite nested radicals arise in number theory and famous examples are some identities as
\[3=\sqrt{1+2\sqrt{1+3\sqrt{1+4\sqrt{...}}}}\]
due to Ramanujan (to be more precise, $3=\lim_n \sqrt{1+2\sqrt{1+3\sqrt{1+4\sqrt{...+n\sqrt{1}}}}}$ (and this appears as a question in the Journal of the Indian Mathematical Society in 1911, by  Ramanujan).  Identities between continuous fractions and infinite nested radical already exists:
\ben\label{eq:sgef} \phi=1+\cfrac{1}{1+\cfrac{1}{1+\cdots}}={\sqrt{1+\sqrt{1+\sqrt{1+\cdots}}}}\een
where $\phi=(1+\sqrt{5})/2 $ is the golden ratio.

The identity between the multicontinuous fraction and branching infinite radical in \eref{eq:gk2} and \eref{eq:gk3} is very general, can be stated in the formal sense, but require in $\C$ additional assumption to get convergence. Setting
\[\alpha_k = \ell_k {\sf p}_k,~~\beta_k = r_k{\sf p}_{k+1},\] 
\ben\label{eq:gk2a}
g_k&=&\frac{1}{1-\l[s_k+ \alpha_k g_{k} +\beta_k g_{k+1}\r]},\\
g_k&=&\frac { 1- \beta_k \,g_{k+1}\,-s_k-\sqrt { \beta_k^2 {g_{k+1}}^{2}+2\,\beta_k \,g_{k+1}(s_k-1)-4\,\alpha_k +({s_k}-1)^2} }{2\,\alpha_k}
\een
Formula as \eref{eq:sgef}, is in fact a relation between 3 quantities, and then, to go beyond  the identity between \eref{eq:gk2} and \eref{eq:gk3} a third point of view is needed.

\paragraph{When the tree $T$ is not a regular $d$-ary tree,} then the same expansion remains valid but in
\eref{eq:tree_CT}, since $c_T(u)$ depends on $u$, the expansion is not regular.
We see that the algebraic expansion of $\mathcal{W}_{\root}$, takes progressively the same shape as the tree $T$. In particular, if $v$ is a leaf, $\mathcal{W}_v=1$, the tree-like development  of the multicontinuous fractions expansion stops at the corresponding place. In the other end, each end in the tree, produce and infinite ``pile'' of fractions.

\subsubsection{Multicontinuous fractions associated with \AUD chains}

For general \AUD chains, jumps to the descendants are allowed. Let us call $E_u$ the set of excursions (i.e. paths) in $T_u$, starting and ending at $u$. We again assume that any step $(a,b)$ is still weighted by $w_{a,b}$ which is either a complex number (so that some of the following considerations may only hold on a convergence domain) or a formal variable. 

An excursion in $E_u$ can be decomposed:\\
-- It is either a path reduced to the root of $T_u$, i.e. $p=(u)$,\\
-- It may start with a step $(u,u)$ then followed by any path from $E_u$,\\
-- or it may start with a jump  $(u,v)$ with $v\in T_u$, $v\neq u$. Denote by $v_0=v, v_1,\cdots,v_h=u$ be the ancestors of $v$ (from $v$ to $u$). A path in  $E_u$ starting from $(u,v)$ can be decomposed as $(u,v_0),e_0,(v_0,v_1),e_1,\cdots,e_{h-1},v_{h-1},v_{h},e_h$, where $e_i$ is any excursion in $E_{v_i}$. \par
From that we find that $W_u$ the total weight of excursion in $T_u$ is 
\ben W_u &=& 1+\l[w_{u,u}+\sum_{v \in T_u \setminus\{u\}} w_{u,v}\prod_{a\in\cror{u,v}} w_{a,p(a)} W_a\r]W_u\\
             &=& \frac{1}{1-\l[w_{u,u}+\sum_{v \in T_u \setminus\{u\}} w_{u,v}\prod_{a\in\cror{u,v}} w_{a,p(a)} W_a\r]},\een
which is again a multicontinuous fractions formula. Using the same approach as in \eref{eq:dqg} and \eref{eq:qgqd}), it can be used to compute Green functions, the probability of returning to before touching a given level or node, etc.

\subsection{A family of Markov chains on the rational numbers}
Set $\N^{+}=\{1,2,\cdots\}$, and let us define $\Q^+:=\{ a/b, a \in \mathbb{N}^+,b\in \N^+, {\sf gcd}(a,b)=1\}$ the set of rational numbers in the reduced standard form.
For each element $a/b \in\Q^+$, define a notion of left $L(a/b)$ and right $R(a/b)$ child of $a/b$ as 
\[L\left(\frac{a}{b}\right)=\frac{a}{a+b}\quad \text{and }\quad  R\left(\frac{a}{b}\right)=\frac{a+b}{b}.\]
Since $R(a/b)>1$ and $L(a/b)<1$, each element $a/b$ can only be in the image of $L$ or in the image of $R$, but not both. Define the parent of $a/b$ when $a>b$ as $P(a/b)=a'/b'\in \Q^{+}$ such that $R(a'/b')=a/b$ (so that $P(a,b)=(a-b)/b$), and when $a/b<1$, the parent  of $a/b$ is $a'/b'\in\Q^{+}$ such that $L(a'/b')=a/b$, and then $P(a/b)=a/(b-a)$~:
\[P\left(\frac{a}{b}\right) = \1_{a>b}\frac{a-b}{b} + \1_{a<b}\frac{a}{b-a}.\]
Notice that this definition does not define the ancestor of $1/1$: set instead $P(1/1)=1/1$.

It is possible to define a (time-homogeneous) Markov chain $(X_0,X_1,\cdots)$ on $\Q^{+}$ by defining the transition probabilities as follows. For each $a/b\in \Q^+$ take four numbers $(r(a/b), \ell(a/b), p(a/b), s(a/b))\in[0,1]^4$ summing to 1 such that and conditional on $X_i=a/b$:\\
$\bullet$ $X_{i+1}=R(a/b)$ with probability $r(a/b)$,\\
$\bullet$ $X_{i+1}= L(a/b)$ with probability $\ell(a/b)$,\\
$\bullet$ $X_{i+1}=P(a/b)$ with probability $p(a/b)$,\\
$\bullet$ $X_{i+1}=a/b$ with probability $s(a/b)$ (the stay probability).

We call transition family the sequence $[(r(x),\ell(x),p(x),s(x)),x\in \Q^+]$ (and of course, it plays the same role as a transition matrix; it is just presented differently). 
Now, we claim the following (simple but fun) result:
\begin{pro}  Every transition family $[(r(x),\ell(x),p(x),s(x)),x\in \Q^+]$ such that all the $r(x)$, $\ell(x)$, $p(x)$ are positive (except $p(\root)=0$) is irreducible on $\Q^+$.
  \end{pro}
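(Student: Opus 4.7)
The plan is to exploit the tree structure that the maps $L$, $R$, $P$ place on $\Q^+$ (it is the Calkin--Wilf tree, rooted at $1/1$, in which each positive rational in lowest terms appears exactly once and $\{L(x),R(x)\}$ are the children of $x$). Irreducibility on a tree with an irreducible nearest-neighbor kernel amounts to showing that the root is reachable from everywhere and that every node is reachable from the root.

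First, I would show that from any state $a/b\in\Q^+$, iterating the parent map $P$ reaches $1/1$ in finitely many steps. This is precisely the subtractive form of the Euclidean algorithm on the pair $(a,b)$: each application of $P$ replaces the larger of $a$, $b$ by the difference, and therefore strictly decreases $a+b$ until we hit $a=b$; since $\gcd(a,b)=1$ is preserved by subtraction, the only stopping state in $\Q^+$ is $1/1$. Consequently there exists a finite chain
\[ a/b = x_0,\; x_1 = P(x_0),\; \ldots,\; x_k = 1/1, \]
and each transition $x_i \to x_{i+1}$ has positive weight $p(x_i)>0$ (the assumption $p(\root)=0$ is irrelevant here since the root is only the endpoint of the path, not an intermediate step). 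Hence $\P_{a/b}(X_k = 1/1) \ge \prod_{i=0}^{k-1} p(x_i) > 0$.

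Second, I would show that from $1/1$ every rational $c/d\in\Q^+$ is reachable. Apply the previous step to $c/d$, obtaining an ascending sequence $c/d = y_0, y_1,\ldots, y_m = 1/1$ with $y_{i+1} = P(y_i)$. Reversing this sequence gives a descending path $1/1 = y_m, y_{m-1},\ldots, y_0 = c/d$; by the definition of $P$ as the inverse of $L$ (when $y_i<1$) or of $R$ (when $y_i>1$), each step of the reversed path is either an $L$-step or an $R$-step, each occurring with weight $\ell(y_{i+1})>0$ or $r(y_{i+1})>0$. Therefore $\P_{1/1}(X_m=c/d)>0$.

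Concatenating the two paths gives, for arbitrary $x,y\in\Q^+$, a finite trajectory from $x$ to $y$ of positive probability (climb from $x$ to $1/1$ via $P$-steps, then descend from $1/1$ to $y$ via $L$- and $R$-steps). This is precisely irreducibility. The only delicate point is to be careful that $p(\root)=0$ does not break the argument: it does not, since on the ascending half the root is reached only as the final vertex, and on the descending half we leave the root immediately by an $L$- or $R$-transition, which has positive weight by hypothesis. No other obstacles are expected; the statement is essentially a combinatorial reformulation of the Euclidean algorithm.
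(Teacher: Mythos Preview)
Your proof is correct and follows essentially the same route as the paper: both reduce irreducibility to the fact that the binary tree generated by $L$ and $R$ from $1/1$ contains every element of $\Q^+$ exactly once, so one can climb to the root via $P$-steps and descend via $L$/$R$-steps. The paper simply invokes this as the known Stern--Brocot (more precisely, Calkin--Wilf) tree property, whereas you supply the argument explicitly via the subtractive Euclidean algorithm; your version is thus a slightly more self-contained rendering of the same idea.
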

  Irreducibility here entails that 1/1 is accessible from any element of $\Q^+$: this is a child's version of the Syracuse problem!

  This proposition is a consequence of the so-called Stern--Brocot tree (SBT) construction (originally in Stern \cite{MR1579066}, Brocot \cite{Brocot}), which is a binary tree, which is marked $1/1$ at the root, and for each node marked by a rational number $a/b$ the left child is marked $L(a/b)$ and the right one $R(a/b)$. It is known that this tree contains each rational number of $\Q^+$ exactly once.
  If we take the usual binary tree with nodes are words in the alphabet $\{0,1\}$, for example $01011$, and make the correspondence $R\to 1$ and $L\to 0$. For that, we can construct a map $f: \{0,1\}\times \Q^+ \to \Q^+$ to encode this, by setting, for $x\in \Q^+$,
  \[ f(0,x)= L(x),~~f(1,x)=R(x);\]
  the node $u \in T_2$ (the infinite binary tree), the rational associated is $q(\root)=1$ and for $u=(u_1,\cdots,u_h)\neq \root$,
\ben q(u)= f(u_h,f(\cdots,f(u_1, 1/1) \cdots)).\een
   
Thus, any random walk on $T_2$ can be expressed in terms of the SBT, and every recurrent Markov chain on $T_2$ has an equivalent on the SBT that is also recurrent.
For example, if we play $L$ with probability $1/4$, $R$ with probability $1/4$ and $P$ with probability $1/2$, then, starting from any rational number, we will visit 1/1 infinitely often. 

\begin{rem}More generally, any transition matrix $\M$ on the rational numbers corresponding via the SBT correspondence to an \AUD transition matrix $\U$ can be solved: it is possible to compute the invariant measure $\pi$, and to decide if it is positive recurrent (and possibly to decide if it recurrent).
  \end{rem}

\section{Examples}
\label{sec:example}

In this section we present a set of examples that are used throughout the paper. We start by introducing a technique motivated by Section \ref{sec:leaf} and more precisely Theorem \ref{theo:lgp}.
\paragraph{Computation by leaf addition (CLA):}
We start by fixing the value at the root $\rho_{\root}=1$ and in order to discover the value of the $\mathsf{h}$-invariant measure at a point $u$ of the tree, it is sufficient to consider the path $\cro{\root,u}$ and use the leaf addition technique as follows: the path $\cro{\root,u} = v_0,v_1,\dots, v_{|u|}$, where $v_0=\root$ and $v_{|u|}= u$ and we consider the family of growing paths (which are trees) $T_i = \cro{\root,v_i}$. 
Then, by \eqref{eq:fgeht},
\begin{align}\label{eq:lafo}
	\rho_u = {\sum_{v\in \cro{\root,p(u)}} \rho_{v} \U_{v,T_{u}}}/\,{\U_{u,p(u)}}.
\end{align}   
{\bf Notation} For a finite set $E$ denote by $\uniform{E}$ the uniform distribution over $E$

In the examples, we will use the following \textbf{expansion principle}: assume that a function $g:T\to \R$ satisfies $g(\root)=1$ and for $u\neq \root$, the recurrence,
\ben
g(u)&=&\sum_{v \in \cro{\root,p(u)}} g(v) F(v,u)
\een for some function $F:T^2\to \R$.
In this case, by expansion
\ben\label{eq:qfzr} g(u)&=& g(\root)\sum_{\ell=1}^{|u|} \sum_{(v_0,\cdots,v_{\ell})} \prod_{i=1}^{\ell} F(v_{i-1},v_i)\een
where the sequence $(v_0,\cdots,v_{\ell})$ starts at $v_0=\root$, ends at $v_\ell=u$ and is increasing along the line $\cro{\root,u}$ in the sense that $|v_{i-1}|< |v_i|$.
Denote by ${\sf Seq}(u,\ell)$ the set of such sequences.

\begin{exa}\label{ex:2}[Uniform \AUD in a finite tree]\\
  \normalfont Let $\U$ be a \AUD transition matrix on a finite tree $T$ characterized as follows: if  $(X_i,i\geq 0)$ is a $\U$-Markov chain, then knowing  $X_i=u$, then    $X_{i+1}$ is chosen uniformly in $T_u \cup\{p(u)\}$ for $u\neq \root$, and uniformly in $T_\root = T$ if $u=\root$.  
	We show how to retrieve the value of $\rho_u$ from the \textbf{(CLA)}. First we set $\rho_\root=1$ and by \eref{eq:lafo}, for  
 $u\neq \root$ write,	
    \ben
		\rho_u &=& \dis\sum_{v\in \cro{\root,p(u)}} \rho_{v} F(v,u)
    \een
    with \ben F(v,u):=\frac{ |T_u|/(|T_v|+1_{v\neq \root})}{1/(|T_u|+1)}=\frac{|T_u|(|T_u|+1)}{|T_v|+1_{v\neq \root}},\een
    and then, we can use the expansion principle for $g=\rho$ with the function $F$.
Some telescopic simplifications arise using Formula \eref{eq:qfzr}, and we get
 \begin{align}
   \rho_u &= \frac{|T_{u}|+1}{|T_\root|}\sum_{\ell=1}^{|u|} \sum_{(v_0,\cdots,v_{\ell})\in{\sf Seq}(u,\ell)}\prod_{i=1}^{\ell} |T_{v_{i}}|\\
        &=\frac{|T_{u}|+1}{|T_\root|} \l[  \prod_{v \in \croc{\root,u}} \left(1+|T_{v}|\right) \r]|T_{u}| 
 \end{align} this last identity can be easily checked:  the expansion of  $\prod_{v \in \croc{\root,u}} \left(1+|T_{v}|\right)$ gives the sum on all increasing sequences  $s=(u_{m_1},\cdots,u_{m_\ell})$ made of elements of $ \croc{\root,u}$, of any size $\ell$,  of the product of $|T_{u_{m_j}}|$.

\end{exa}

\begin{exa}\label{exa:FC}[The Markov chain: with probability $p$ choose a uniform descendant,otherwise the parent]\\
	\normalfont
	Let $T$ be a finite tree, and $p\in(0,1)$ a parameter. 
	Consider the \AUD  transition matrix $\U$ defined by $\U_{\root,.}=p\uniform{T_\root} +(1-p)\delta_{\root}$, and for $u\neq \root$, $\U_{u,p(u)}=1-p$, $\U_{u,v}=p\uniform{T_u}$.\par
	We compute the invariant measure using  the \textbf{CLA} : we set $\rho_\root=1$ and consider $u\in T\backslash \{\root\}$.  By equation \eref{eq:lafo}, we have $\rho_u  = \sum_{v\in \cro{\root,p(u)}} \rho_{v} F(v,u)$ for 
    $F(v,u) = \left(p/(1-p)\right)|T_{u}|/|T_v|$. By the expansion principle
    \begin{align}
	\rho_u	&=\sum_{\ell=0}^{|u|}\sum_{(v_0,\cdots,v_{\ell})} \left( \frac{p}{1-p} \right)^\ell\prod_{i=1}^{\ell} \frac{|T_{v_{i}}|}{|T_{v_{i-1}}|}\\
            &=\frac{|T_{u}|}{|T_{\root}|}\sum_{\ell=1}^{|u|} \binom{|u|-1}{\ell-1} \left( \frac{p}{1-p} \right)^\ell=\frac{|T_{u}|}{|T_{\root}|}p (1-p)^{-|u|}
 \end{align}
 because the cardinality of $\l|{\sf Seq}(u,\ell)\r|$ is given by the number of ways to choose $\ell-1$ elements in $\rrbracket \root , u \llbracket$. 
\end{exa}

\begin{exa}\label{Ex_line}[Markov chain with a simple ``distance to the root projection'' in the complete $d$-ary tree]\\
	\normalfont
	Let $T$ be an infinite complete $d$-ary tree (each node has exactly $d$ descendants, for some positive integer $d$, so that the number of descendants of any node at distance $k$ is $d^k$).\par
     Take an almost upper triangular transition matrix ${\cal  U}=({\cal U}_{a,b},a,b\geq 0)$ on $\mathbb{N}$ (meaning that ${\cal U}_{i,j}>0$ implies that $j\geq i-1$), that we suppose irreducible . 
	Consider the \AUD  transition matrix $\U$ defined by $\U_{\root,v}= {\cal U}_{0,|v|}/d^{|v|}$ for all $v\in T$, and for $u\neq \root$ and $v\in T$, $\U_{u,p(u)}={\cal U}_{|u|,|u|-1}$, $\U_{u,v}= \1_{v\in T_u} {\cal U}_{|u|,|v|}/d^{|v|-|u|}$.\\
	The following representation can be easily checked from the invariance equations
	\begin{align}
		\rho_u 	&={\rho^{\N}_{|u|}}{d^{-|u|}},
	\end{align}
	where $\rho^{\N}$ denotes the invariant measure of $\mathcal{U}$. The result is intuitive in the sense that $\rho$ corresponds to a symmetric partition of $\rho^\N_h$ along the nodes of height $h$.  
    Notice that $\rho^\N$ is unique and has an explicit expression since $\mathcal{U}$ is an almost upper triangular transition matrix (see \cite{MR4669754}).
	However, it is important to notice that this expression corresponds to the $\sf{h}$- invariant measure of the matrix $\U$ but it does not give any hint on how to obtain the set of all left eigenvectors, which in this case has infinite dimension, i.e. $Q(T)=\infty$. The uniqueness of the left eigenvectors for $\mathcal{U}$ tells us that for the other left eigenvectors $\nu$ for $\U$, there exists some level $h$, such that $\nu_u$ is not constant on the set of nodes $u$ at that level. This is consistent with the content of \Cref{sec:LECTM}, where during the construction of a pre-solution, each branching points (for which at least two subtrees are infinite) provides a space of solutions, only one of which is symmetric. 
\end{exa}
\begin{exa}\label{exa:d-ary}[Jump to the leaves or to the parent]\\
	\normalfont Take an infinite $d$-ary tree, i.e. each node has d children or none, with a countable number of ends and a transition matrix given by
	$\U_{u,p(u)}=(1-p)\1_{u\neq \root}$ and $\U_{u,v}=\1_{v \in \partial T, u\preceq v} d^{|v|-|u|} (p\1_{u\neq \root} + \1_{u=\root})$; i.e. with probability $1-p$ we make a transition from $u$ to the parent (if there is one) and with probability $p$ we go to a leaf of the subtree hanging from $u$ and we make the transition to a particular leaf as $d^{-1}$ power the distance of this leaf from $u$. 
	
	We use the \textbf{CLA} again, setting $\rho_\root=1$, and for  nodes $u\neq \root$, $\rho_u=\sum_{v\in \cro{\root,p(u)}} \rho_{v} F(v,u)$ with
    \[F(v,u)= \left(\frac{p}{1-p}\right)d^{|v|-|u|}\] since $\U_{v,T_{u}} = p d^{|v|-|u|}$.
    By the expansion principle,
    \begin{align}
      \rho_u&		=\sum_{\ell=1}^{|u|}\sum_{(v_0, \cdots,v_{\ell})\in{\sf Sq}(u,\ell)} \left( \frac{p}{1-p} \right)^\ell d^{-|u|}
		=d^{-|u|}p(1-p)^{-|u|}.
	\end{align}
\end{exa}
 
\begin{exa}\label{ex:3}[Tree with two ends, two non colinear invariant measures, ``one end positive recurrent'' and ``one end transient''. ]\\
  \normalfont
  The following example is useful to understand the ``projection onto ends'', and maybe to see that the positive recurrence on one end, does not reduce the dimension of the cone of positive invariant measures. \par
  Consider the transition matrix $\M_{i,i+1}=2/3$, $M_{i,i-1}=1/3$ of the simple biased Markov chain on $\Z$. Now, see $\Z$ as a tree rooted at 0, with two ends, $\p^+=(0,1,2,\cdots,)$ and $\p^-=(0,-1,-2,\cdots)$.
  The two corresponding projected transition matrices are given by
 \[ \begin{array}{cclcclccll}
  \M^{\p^+}_{i,i+1} &=& 2/3, &\M^{\p^+}_{i,i-1}&=& (1/3)1_{i>0}, &\M^{\p^+}_{i,i}&=& (1/3)\1_{i=0}, &\textrm{ for }i \geq 0\\
   \M^{\p^-}_{i,i+1} &=& (2/3)\, 1_{i<0}, &\M^{\p^+}_{i,i-1}&=&1/3, &\M^{\p^+}_{i,i}&=& (2/3)\,\1_{i=0}, &\textrm{ for }i\leq 0.
    \end{array}\]
  The standard theory of Markov chains, or the use of criteria for birth and death processes, allows us to see that $\M^{\p+}$ is transient, $\M^{\p^-}$ is positive recurrent. From \Cref{pro:qff}, we know that the left 1-eigenspace of $\M$ has dimension 2. One of the invariant measures is $\rho=(1, 1\in \Z)$. To find a second positive invariant measure $\rho'=(\rho'_k,k\in \Z)$, fix $\rho'_0=1$, and $\rho'_1=1+x$, and solve the system $\rho'_j=(2/3)\rho_{j-1}'+(1/3)\rho'_{j+1}$, one finds: 
   \[ \rho'_j = 1 - (1-2^{j})x.\] 
For any $x\in(0,1]$, this measure is positive (and linearly independent of $\rho$).
\end{exa}

\begin{exa}\label{sec:qfgrht}[Infinite number of ends: transient Markov chain with infinite cone of invariant measures where all the chains associated with each end are recurrent]\\
	\normalfont
Here we give an example of a $\M$-random walk on the complete binary tree  $T=\cup_{k\geq 0}\{0,1\}^k$ where the set of invariant measures is an infinite dimensional cone. This multiplicity implies that $\M$ is transient, while, each chain associated with each end (as defined in \Cref{sec:CMKE}) is recurrent. 
In the tree $T$ all nodes have degree 3 except the root which has degree 2.

Consider the transition matrix of the simple random walk $\M$ on $T$, defined by $\M_{u,p(u)}=\M_{p(u),u}=1/3$ for all $u\neq \root$, and $\M_{\root,1}=\M_{\root,0}=\M_{\root,\root}=1/3$. 

For each end  $\p$ identified with $\N$, the transition matrix $\M^\p$  (see \Cref{sec:CMKE}) is a birth and death process with transition matrix $K$ with entries $K_{0,0} = 2/3$, $K_{0,1} = 1/3$ and  $K_{i,i-1}= K_{i,i}=K_{i,i+1}=1/3$ for every $i\in \N\setminus \{0\}$. Hence each $\M^\p$ is recurrent. 

Now, we claim that for all $x\in[0,1/2]$, there exists an invariant measure $\rho^{(x)}=\l(\rho_u^{(x)},u\in T\r)$, that is solution to $\rho \M=\rho$, and such that (near the root),
\ben\label{eq:near0} \l(\rho^{(x)}_\root, \rho^{(x)}_0,\rho^{(x)}_{1}\r)=\big(1,1+x,1-x\big).\een
In fact, the $h$-invariant measure of $\U$ is the constant measure $(1, u\in T)$; near the root, it coincides with the case $x=0$, given in \eref{eq:near0}.  

Once the values \eref{eq:near0} are fixed, in order to construct $(\rho^{(x)}_u,u\in T)$ invariant by $\M$, it suffices to fix some $\rho^{(x)}_u$ such that each $\rho^{(x)}_v$ is the mean of its neighboring values $\rho^{(x)}_w$ (for $v\neq \root$). A rapid inspection shows that there are many solutions to this problem! It is less easy to find positive solutions.

Let us choose the same weight for siblings $\rho^{(x)}_{u0}=\rho^{(x)}_{u1}$ for all $u$ at depth at least 2: eventually, all strict descendants $u$ of $0$ such that $|u|=k$ will have weight $\rho_u^{(x)}=a_k$, all descendants $u$ of $1$, such that $|u|=k$  will have weight $\rho_u^{(x)}=b_k$ for some sequence $(a_k)$ and $(b_k)$ that we now determine.\par
We thus have $a_1=1+x$ and  $b_1=1-x$. Let us additionally set  $a_0=b_0=1$, which is consistent with $\rho_\root^{(x)}=1$. 

\paragraph{Control of the sign.} 
In order that $\rho^{(x)}$ satisfies the balance equation at a node $u$ at depth  $|u|=k\geq 1$, in the subtree $T_0$, we need 
$2 a_{k+1}+a_{k-1}=3a_k$; this gives $a_k=1+\frac{2^{k}-1}{2^{k-1}}x$
(they are positive, and even, increasing). While in the right subtree $T_1$, by the same reasoning, we need (for $k\geq 1$),  $2 b_{k+1}+b_{k-1}=3b_k$, which gives $b_k=1-\frac{2^{k}-1}{2^{k-1}}x$
and we see that this is positive for all $x \in [0,1/2]$.

To produce other (linearly independent solutions): it suffices to make a bit more abstract what has been done. For any node $u$, the measure
\[\rho^{(u)}=\big(\rho^{(u)}_v,v\in T\big)= \big(\1_{v \in T_{u0}}-\1_{v\in T_{u1}},v\in T\big)\]
that is $\rho^{(u)}_v=1$ (resp. -1) iff $v$ a strict left (resp. right) descendant of $u$, and zero otherwise. 
Clearly, the measures $(\rho^{(u)}, u \in T)$ are linearly independent. Moreover, all measures of the type
  \[\mu:=\pi + \sum_{u \in T}  \lambda_u \rho^{(u)}\]
  for some constants $(\lambda_u)\in \C^{T}$  are well defined and invariant by $\U$: indeed, the value $\mu_v$ can be expressed as the finite sum $1+ \sum_{u\in\cro{\root,v}} \lambda_u . {\sf Sign}(u,v)$ with a ${\sf Sign}(u,v)$ being $+1$ or $-1$ depending on whether $v$ is in the left or right subtree of $u$ and $0$ if it does not belong to either of these trees. But, in any case, by taking the $(\lambda_v,v\in T)$ real numbers small enough so that along each end $\p$, $\sum_{v\in \p} |\lambda_v|<1$, the (row vector) measure $\mu$ has all its coordinates positive; this means that the cone of invariant positive measures is infinite dimensional.

\end{exa}

\bibliographystyle{abbrv}


\newpage
\tableofcontents
 \end{document}